\documentclass[12pt,reqno]{amsart}

\usepackage{amsmath,amsthm,amssymb,amsfonts,amscd}
\usepackage{mathrsfs}
\usepackage{graphicx}
\usepackage{booktabs}
\usepackage{float}
\usepackage{needspace}
\usepackage[section]{placeins}
\usepackage{geometry}
\usepackage{xcolor}
\usepackage{listings}
\usepackage[backref=page,hidelinks]{hyperref}

\lstdefinestyle{paperpython}{
  language=Python,
  basicstyle=\ttfamily\tiny,
  keywordstyle=\color{blue!55!black},
  commentstyle=\color{green!35!black},
  stringstyle=\color{red!50!black},
  numbers=left,
  numberstyle=\ttfamily\tiny\color{black!50},
  stepnumber=10,
  numbersep=7pt,
  xleftmargin=2.2em,
  framexleftmargin=1.8em,
  frame=single,
  framerule=0.2pt,
  breaklines=true,
  breakatwhitespace=false,
  columns=fullflexible,
  keepspaces=true,
  showstringspaces=false,
  tabsize=4,
  captionpos=t
}

\hypersetup{
  pdftitle={Universal murmuration and Hecke augmentation},
  pdfauthor={Shenghao Hua and Chung Pang Mok}
}

\numberwithin{equation}{section}

\theoremstyle{plain}
\newtheorem{theorem}{Theorem}[section]
\newtheorem{lemma}[theorem]{Lemma}
\newtheorem{corollary}[theorem]{Corollary}
\newtheorem{proposition}[theorem]{Proposition}

\theoremstyle{definition}

\newtheorem{conjecture}[theorem]{Conjecture}

\theoremstyle{remark}
\newtheorem{remark}[theorem]{Remark}

\newcommand{\rank}{\operatorname{rank}}

\newcommand{\sym}{\operatorname{sym}}

\newcommand{\N}{\operatorname{N}}
\newcommand{\GL}{\operatorname{GL}}

\begin{document}

\title[Universal Murmuration]{Universal murmuration and Hecke augmentation}
\author{Shenghao Hua}
\address[1]{Shanghai Institute for Mathematics and Interdisciplinary Sciences (SIMIS), Shanghai, 200433, China}
\address[2]{Research Institute of Intelligent Complex Systems, Fudan University, Shanghai, 200433, China}
\email{huashenghao@vip.qq.com}

\author{Chung Pang Mok}
\address[1]{Shanghai Institute for Mathematics and Interdisciplinary Sciences (SIMIS), Shanghai, 200433, China}
\email{cpmok@simis.cn}

\begin{abstract}
Prime coefficients of elliptic curves exhibit murmurations, statistical patterns that support the prediction of arithmetic labels.
We conjecture that root number weighted averages of unitary normalized coefficients at primes and prime powers sample the same leading profile when placed at the effective position $p^k/X$,
where $X$ is the conductor scale.
For weight $2$ newforms of squarefree level in the level aspect, we prove this principle for every fixed $k$ under suitable short-window and growth conditions, extending Zubrilina's prime case and the square-case analysis of Kundu and M\"uller.
Experiments with elliptic curve isogeny class representatives show that the resulting prime power features improve root number prediction and give a smaller gain in distinguishing ranks $0$ and $1$.
\end{abstract}

\keywords{Fourier coefficients, elliptic curves, symmetric powers, arithmetic statistics, machine learning}

\maketitle

\section{Introduction}

Murmurations are small coherent biases in Fourier coefficients when arithmetic objects are ordered by a global invariant and separated by an arithmetic label.
He, Lee, Oliver, and Pozdnyakov~\cite{HeLeeOliverPozdnyakov2025} discovered this remarkable oscillatory pattern in average Frobenius traces for elliptic curves grouped by rank and ordered by conductor within bounded intervals.
The pattern was first detected through machine learning and computational experiments.
Sutherland and the same authors later observed related biases in broader families of arithmetic $L$-functions
~\cite{Sutherland2022,HeLeeOliverPozdnyakov2025}.
Their role in arithmetic prediction was developed further
in~\cite{HeLeeOliverPozdnyakov2026,Pozdnyakov2024}.
Sarnak observed~\cite{Sarnak2023} that murmuration is closely related to low-lying zeros~\cite{IwaniecLuoSarnak2000}.

The unconditioned local distribution of these coefficients is also well understood in the level aspect.
For each fixed prime $p$, Sarnak and Zubrilina~\cite{SarnakZubrilina2024} proved quantitative
convergence of the Hecke eigenvalues of weight $2$ forms to the $p$-adic Plancherel measure as the level tends to infinity.
Murmuration describes a finer phenomenon.
Zubrilina gave the first unconditional proof of murmuration by establishing a level-aspect asymptotic formula for the average of prime coefficients weighted by the root number~\cite{Zubrilina2025}.
Kundu and M\"uller treated the square-coefficient case and recorded the prime-power trace formula in~\cite{KunduMuller2026}.
Related results now appear for several families of automorphic objects in~\cite{BBLLD2026,BLLDSHZ2024,Cowan2024,LeeOliverPozdnyakov2025,SawinSutherland2025}.
The root number is the sign in the functional equation of the $L$-function.
For an elliptic curve, the functional equation gives
$\varepsilon(E)=(-1)^{r_{\mathrm{an}}(E)}$; the parity conjecture predicts
$\varepsilon(E)=(-1)^{\rank E(\mathbb Q)}$.
This relation explains the close connection between root-number and rank prediction.

For a finite family or multiset $\mathcal F$, we use the uniform average unless another weight is stated.
In the weight $2$ level aspect, $\mathbb E_{X,Y}$ is the uniform average over pairs $(N,f)$, where $N$ is squarefree in $[X,X+Y]$ and $f$ is a normalized newform of level $N$ with trivial nebentypus.
Every normalized newform is counted once.

The usual picture places the coefficient at a prime $p$ at the scaled position $p/X$, where $X$ is the conductor scale of the family.
We ask whether the coefficient at a prime power $p^k$ belongs to the same picture when placed at $p^k/X$.
This question leads to the following conjectural framework for each specified full family.

\begin{conjecture}[Universal prime and prime-power murmuration]
\label{conjUniversal}
Let $\mathcal F$ be a self-dual family of primitive $\GL_2$ forms, with root numbers in $\{-1,1\}$, ordered by analytic conductor $C(f)$, and write
\begin{equation*}
\mathcal F_{X,Y}=\{f\in\mathcal F:X\leq C(f)\leq X+Y\}.
\end{equation*}
Let $Y=Y(X)>0$ be a family-specific regime in which $|\mathcal F_{X,Y}|\to\infty$.
Assume that $\mathcal F$ is specified by fixed local and archimedean conditions, with no selection based on Fourier coefficients or root signs.
Let $\varepsilon(f)$ denote the root number.  At an unramified prime $p$, write the local factor in unitary normalization as
\begin{equation*}
L_p(s,f)=\bigl(1-\alpha_{f,p}p^{-s}\bigr)^{-1}
\bigl(1-\beta_{f,p}p^{-s}\bigr)^{-1}
\end{equation*}
and set
\begin{equation*}
\lambda_{f,k}(p)=\sum_{j=0}^{k}
\alpha_{f,p}^{k-j}\beta_{f,p}^{j}.
\end{equation*}
At a ramified prime, let $\lambda_{f,k}(p)$ be the coefficient of $p^{-s}$ in the unitary local symmetric-power Euler factor.
Assume that a continuous function $G_{\mathcal F}:(0,\infty)\to\mathbb R$ gives the prime-coefficient profile: for every compact set $K\Subset(0,\infty)$,
\begin{equation*}
\sqrt X\,
\mathbb E_{\mathcal F_{X,Y}}
\bigl[\varepsilon(f)\lambda_{f,1}(p)\bigr]
=
G_{\mathcal F}\left(\frac pX\right)+o_{\mathcal F,K}(1)
\end{equation*}
locally uniformly along primes $p=p(X)$ with $p/X\in K$.
Then, for every fixed integer $k\geq2$ and every compact set $K\Subset(0,\infty)$,
\begin{equation*}
\sqrt X\,
\mathbb E_{\mathcal F_{X,Y}}
\bigl[\varepsilon(f)\lambda_{f,k}(p)\bigr]
=
G_{\mathcal F}\left(\frac{p^k}{X}\right)+o_{\mathcal F,k,K}(1)
\end{equation*}
as $X\to\infty$ along primes $p=p(X)$ satisfying $p^k/X\in K$.
Thus the same function $G_{\mathcal F}$ applies to every fixed power.
\end{conjecture}

Here ``universal'' refers to the common profile sampled by every fixed power within one family; different families may have different profiles.

For weight $2$ modular forms, one precise instance of the conjecture follows from the calculation in
Section~\ref{sec:scaled-murmuration}.
For this fixed-weight level-aspect instance, we take the conductor scale to be $C(f)=N$.
The next theorem is a short form of that result.
It also records the known prime case.

For a normalized weight $2$ newform $f$, let
$\lambda_{\sym^k f}(P)$ denote the coefficient at $P$ of the local symmetric-power Euler factor in unitary normalization.  Thus, at an unramified prime,
\begin{equation*}
\lambda_{\sym^k f}(P)=\frac{a_f(P^k)}{P^{k/2}}.
\end{equation*}

\begin{theorem}[The common profile in the weight $2$ level aspect]
\label{thmIntroProfile}
There is a single continuous function $G:(0,\infty)\to\mathbb R$ with the following property.
For every fixed integer $k\geq1$, let $X$, $Y$, and the prime $P$ tend to infinity while $P^k/X$ remains in a fixed compact subset of $(0,\infty)$.
For $k\geq2$, assume the short-window and growth conditions of Theorem~\ref{thm:scaled-murmuration}; for $k=1$, assume the conditions of Zubrilina~\cite[Theorem~1]{Zubrilina2025}.
Then, for normalized weight $2$ newforms of squarefree level in $[X,X+Y]$ with trivial nebentypus,
\begin{equation*}
\sqrt X\,
\mathbb E_{X,Y}
\bigl[\varepsilon(f)\lambda_{\sym^k f}(P)\bigr]
=
G\left(\frac{P^k}{X}\right)+o_k(1),
\end{equation*}
locally uniformly in $P^k/X$.
\end{theorem}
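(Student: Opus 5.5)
The plan is to derive Theorem~\ref{thmIntroProfile} from Theorem~\ref{thm:scaled-murmuration} for $k\geq2$ and from Zubrilina's theorem for $k=1$. The point to verify is that both results produce the same function $G$. Since the bulk of the analysis sits in Section~\ref{sec:scaled-murmuration}, the proof of Theorem~\ref{thmIntroProfile} should reduce to matching the explicit main terms.

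First, for $P\nmid N$, write $\varepsilon(f)\lambda_{\sym^k f}(P)=\varepsilon(f)a_f(P^k)/P^{k/2}$. Using $\varepsilon(f)=-\mu(N)\sqrt N\,\lambda_f(N)$ (up to the usual sign normalization for squarefree $N$), the average becomes a weighted sum of traces of $T_{P^k}W_N$ on $S_2^{\mathrm{new}}(N)$. Passing from the full space to the newspace is done by M\"obius inversion over divisors of $N$, exactly as in Zubrilina and Kundu--M\"uller. The prime-power Eichler--Selberg trace formula recorded by Kundu and M\"uller then expresses $\operatorname{tr} T_{P^k}W_N$ through class numbers $H(4P^k N - t^2N^2)$, or equivalently through sums over $t$ with $t^2N\le 4P^k$. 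The primes $P\mid N$ contribute a negligible proportion of pairs $(N,f)$, since $P\to\infty$, and can be discarded. All identity and hyperbolic terms are treated as error terms under the short-window and growth hypotheses.

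Second, I would average over squarefree $N\in[X,X+Y]$. With $N\approx X$, the condition $t^2N\le 4P^k$ restricts $t$ to $|t|\le 2\sqrt{P^k/X}$, which is bounded on the compact set $K$. For each such $t$, the average of $H(4P^kN-t^2N^2)$ over $N$ in the window is computed by writing $H$ via $L(1,\chi_D)$ and averaging the local densities at each prime $\ell$. This gives an Euler product in $\ell$. The normalization by $P^{k/2}$ and by the dimension $\asymp XY$ of the family leaves $\sqrt X$ times a function of $y=P^k/X$.

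The key observation is where $P$ and $k$ enter. They appear only through $4P^k N-t^2N^2=N(4P^k-t^2N)$, and so only through $P^k/N\approx y$, apart from local factors at $\ell=P$. Those local factors tend to $1$ as $P\to\infty$. Hence the main term is $G(y)$ with $G(y)=\sum_{|t|\le 2\sqrt y}c\,\bigl(4y-t^2\bigr)^{1/2}\cdot(\text{Euler product in }t)$ in Zubrilina's form, independent of $k$. Matching this with Zubrilina's formula at $k=1$ identifies $G$ as her profile. Local uniformity in $y\in K$ follows from continuity of the sums in $y$, including at the boundary values $y=t^2/4$, where the square-root factor vanishes.

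The main obstacle is uniformity of the error terms in the prime-power trace formula, in particular bounding the class-number averages when $P^k$ is large relative to the window length $Y$. This is precisely what the short-window and growth conditions of Theorem~\ref{thm:scaled-murmuration} are designed to handle, and I would simply invoke that theorem for it. The remaining care is to check that the local factor at $\ell=P$ and the excluded levels $P\mid N$ each contribute $o_k(1)$.
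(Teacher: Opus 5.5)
Your top-level plan is the paper's: Theorem~\ref{thmIntroProfile} follows by dividing Theorem~\ref{thm:scaled-murmuration} by $P^{k/2}$ and checking that Zubrilina's $k=1$ formula has the same main term. Beyond citing that theorem, the only real work is identifying this common main term, and the one you write down is wrong.

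You treat every term of the trace formula other than the class-number terms as an error. However, the term $-\sum_{j=0}^kP^j=-\sigma(P^k)$ in Proposition~\ref{prop:trace-formula} is not negligible. Average it over squarefree $N\in[X,X+Y]$ and normalize by $\zeta(2)\pi/(XY)$. Its top piece $P^k$ then gives exactly $\pi y$ (see \eqref{eq:elementary-term}). This is of the same order as $A\sqrt y$ and the $r$-sum, because $y$ stays in a compact set.

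This term is the $-\pi y$ in $\mathcal M(y)$, i.e.\ Zubrilina's $-\gamma y$ with $\gamma=12/D_0$. Your profile $\sum_{|t|\le2\sqrt y}c\,(4y-t^2)^{1/2}(\cdots)$ omits it, so it does not coincide with Zubrilina's profile. The matching at $k=1$ that you assert therefore fails as stated. Your universality heuristic survives once the term is included: $P^k=Xy$ depends only on $y$, and the lower powers $\sum_{j<k}P^j$ contribute only $O(y/P)$.

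Separately, your $G$ lacks the factor $1/\sqrt y$. The averaged unnormalized coefficient is $\mathcal M(y)+o(1)$ and $P^{k/2}=\sqrt{Xy}$, so
$\sqrt X\,\mathbb E_{X,Y}\bigl[\varepsilon(f)\lambda_{\sym^k f}(P)\bigr]=\bigl(\mathcal M(y)+o(1)\bigr)/\sqrt y$.
This division is exactly where $K$ being bounded away from $0$ keeps the $o(1)$ uniform.

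The repair is to skip re-deriving the main term and proceed as the paper does:
\begin{enumerate}
\item Take $\mathcal M(y)$, including $-\pi y$, directly from Theorem~\ref{thm:scaled-murmuration}.
\item Note that its error term tends to $0$ under the stated hypotheses, as checked at the end of its proof.
\item Divide by $\sqrt{Xy}$, uniformly for $y\in K$, and set $G=\mathcal M/\sqrt y$.
\item Verify that with $\alpha=12B/(\pi D_0)$, $\beta=12A/(\pi D_0)$, $\gamma=12/D_0$ and $\nu(r)=C(r)$, Zubrilina's theorem gives the same $\mathcal M$ for $k=1$.
\end{enumerate}
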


\begin{remark}
The case $k=1$ is due to Zubrilina.
Kundu and M\"uller~\cite{KunduMuller2026} analyzed the square case.  Their trace formula has the range $r\leq2P/\sqrt N$, which yields the cutoff $r\leq2\sqrt y$ below.  The cutoff $r\leq\sqrt y$ in their final displayed profile omits this factor of $2$.
Theorem~\ref{thm:scaled-murmuration} gives the corrected square-case cutoff in the present range and proves the statement for every fixed $k\geq3$.
\end{remark}

The function in Theorem~\ref{thmIntroProfile} is explicit.
With $D_0$ defined immediately before Theorem~\ref{thm:scaled-murmuration} and $A$, $B$, and $C(r)$ defined in its statement, put
\begin{equation}
\label{eqMainProfile}
\mathcal M(y)
=
\frac{12}{\pi D_0}
\left(
A\sqrt y+
B\sum_{1\leq r\leq2\sqrt y}
C(r)\sqrt{4y-r^2}
-\pi y
\right)
\end{equation}
and
\begin{equation*}
G(y)=\frac{\mathcal M(y)}{\sqrt y}.
\end{equation*}
The constants in Zubrilina's notation specialize to
\begin{equation*}
\alpha=\frac{12B}{\pi D_0},\qquad
\beta=\frac{12A}{\pi D_0},\qquad
\gamma=\frac{12}{D_0},\qquad
\nu(r)=C(r).
\end{equation*}
Thus the prime formula has exactly the same $\mathcal M$ as formula~\eqref{eqMainProfile}.
For $k\geq2$, division of Theorem~\ref{thm:scaled-murmuration} by $P^{k/2}=\sqrt{X(P^k/X)}$ gives the displayed normalized formula.
The compact condition on $P^k/X$ keeps this division uniform.

The theorem suggests a prediction method under a fixed coefficient budget.
Once $a_p$ is known, the Hecke relation produces $a_{p^2}$ and $a_{p^3}$ without another database query.
These derived coordinates sample the murmuration at new effective positions.
We therefore keep the full prime panel and add a small number of prime-power coordinates.

The derived coordinates are deterministic functions of the original prime coefficients, so the augmented and original data contain the same arithmetic information in the information-theoretic sense.
Their value lies in enlarging the collection of functions available to a linear classifier.
The empirical question is whether this structured representation makes rank and root number more visible.

Our experiments give an answer that depends on the task.
On five overlapping test assignments drawn from one fixed balanced population, the joint quadratic and cubic augmentation improves direct root-number balanced accuracy in all five cases.
Its mean gain is $0.7425$ percentage points.
This removes $2.56$ percent of the errors made by the ordinary prime model.
The improvement for rank $0$ versus rank $1$ is smaller in absolute size; applying formula~\eqref{eqErrorReduction} to the five-assignment mean accuracies gives a relative error reduction of $7.77$ percent.
Performance remains task-dependent: the same augmentation lowers the accuracy of the three-class rank model.
Additional matched experiments compare the Hecke coordinates with ordinary monomials, add the log conductor and low-prime bad-reduction indicators in separate controls, hold out complete conductor quintiles, and replace stochastic-gradient fitting with deterministic logistic regression.
They locate the predictive gain in the shared quadratic--cubic function class, confirm an incremental contribution after the explicit controls, and show that transport across conductor ranges is less stable than random-assignment performance.

\Needspace{0.3\textheight}

\begin{figure}[H]
\centering
\includegraphics[width=0.82\textwidth]
{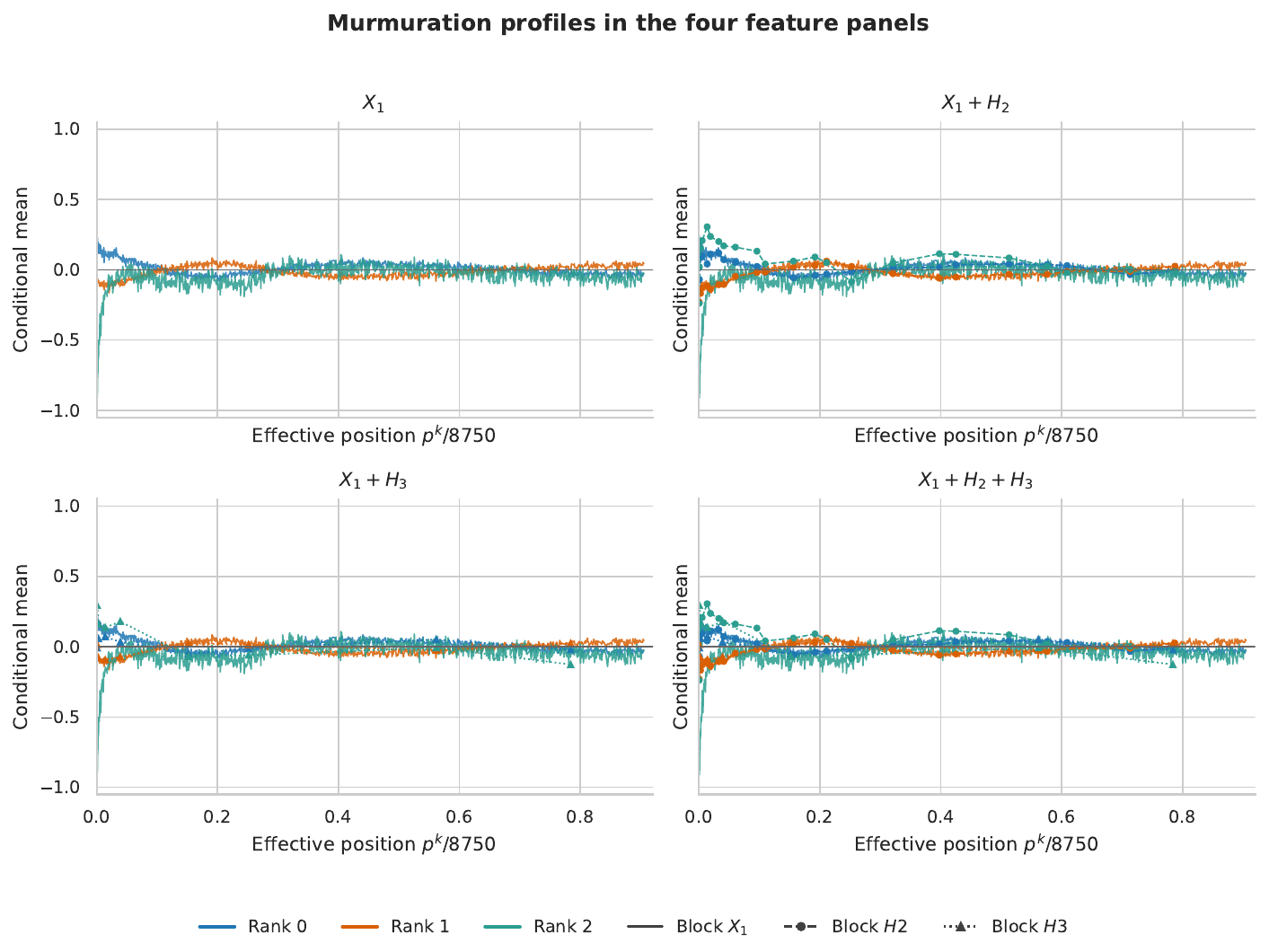}
\caption{Rank-conditioned murmuration profiles in the four feature representations.}
\label{figRepresentations}
\end{figure}

\medskip

Figure~\ref{figRepresentations} illustrates the rank-conditioned normalized profiles used in the experiment.
For a normalized prime coefficient $x_p=a_p/\sqrt p$, the quadratic and cubic coordinates are
\begin{equation*}
H_2(x_p)=x_p^2-1
\qquad\text{and}\qquad
H_3(x_p)=x_p^3-2x_p.
\end{equation*}
They are placed at the effective positions $p^2$ and $p^3$, respectively.
The ordinary prime block contains $1000$ positions, while the quadratic and cubic blocks contain $23$ and $8$ positions.
All profiles are computed from elliptic curves with
$7500\leq N\leq10000$.
In Figure~\ref{figRepresentations}, color indicates the analytic rank and line style indicates the Hecke block.  Each block is drawn only along its own effective positions.
No line joins points belonging to different powers.
The four panels reuse the same three coefficient blocks and form nested views of a single murmuration experiment.

We restrict the main augmentation to $k\leq3$ because, under the cutoff $q=7919$, the condition $p^k\leq q$ leaves only a few small base primes for $k\geq4$; these primes are also frequent places of bad reduction in the sample.
Experiments with a substantially larger prime cutoff could include higher prime-power coordinates.

We also study a complementary substitution design.
It uses one fixed symmetric-power block $H_k$ by itself after removing the ordinary-prime panel.
For $k=2,\ldots,12$, these models usually perform near chance when the labels are the rank and root number of the original elliptic curve.
This stress test shows that a sparse prime-power sequence loses much of the signal carried by the ordinary prime sequence.
The universal profile therefore motivates retaining the prime panel and augmenting it with selected prime-power coordinates.

Three statements should be kept separate.
The first is an asymptotic statement about a mean weighted by the root number.
This is the content of Conjecture~\ref{conjUniversal} and
Theorem~\ref{thmIntroProfile}.
The second is a representation statement.
Hecke relations supply quadratic and cubic functions of the known prime coefficients to a linear model.
The third is an empirical statement about prediction on held-out data.
The theorem motivates the choice of representation; the observed accuracy gain requires separate empirical evidence.

This distinction also explains why the positive and negative experiments belong together.
The common profile identifies effective positions where a weak average bias may occur.
The map $x_p\mapsto H_k(x_p)$ can discard other features of the original coefficient, so a higher-power block may display the common mean profile while performing poorly on its own.
Retaining $H_1$ and adding selected higher blocks uses both observations.

The Sato--Tate law describes the limiting distribution of normalized Frobenius traces when sufficiently many primes are sampled.
Before this limiting distribution becomes dominant, finite families may still retain weak biases depending on the conductor and other global arithmetic data.
Murmuration may be viewed as a distributional pattern
in this pre-Sato--Tate regime.
It becomes visible when elliptic curves are ordered by conductor and separated according to an arithmetic
label.
This viewpoint is related to the pseudorandomness conjecture proposed by Mok in 2023~\cite{Mok2023}, which predicts multivariable pseudorandom behavior for vectors formed from elliptic curve coefficients.
Later work of Mok and Zheng~\cite{MokZheng2025a,MokZheng2025b} provided numerical evidence and applications for this conjectural picture.

The paper is organized as follows.  Section~\ref{secPrimePowerMainTerm} formulates the von Mangoldt average over all prime powers and gives the exact cancellation criterion that preserves the prime-averaged leading term; it then treats fixed prime-power coordinates in the weight $2$ level aspect.
Section~\ref{secFeatures} explains the Hecke feature map and the restricted linear model.
Section~\ref{sec:scaled-murmuration} proves the common profile in the weight $2$ level aspect.
Section~\ref{secDesign} describes the elliptic curve data and the fixed coefficient budget.  Section~\ref{secResults} reports the murmuration plots, the prediction results, the polynomial and conductor controls, the quantity controls, and the single-power substitution experiment.
The appendices record the full fitting protocol and the error accounting.

\section*{Acknowledgements}
S.H. would like to thank Bingrong Huang and Haozhe Gou for their valuable feedback.
S.H. was partially supported by NSFC (No. 12601008), and the China Postdoctoral Science Foundation (No. 2026M793348).

The experiment code was generated by ChatGPT 5.6 Sol.

\section{Prime powers in averaged and coordinatewise trace formulas}
\label{secPrimePowerMainTerm}

The weight-aspect theorem of Bober, Booker, Lee, and Lowry-Duda~\cite[Theorem~1.1]{BBLLD2026} and the Maass-form theorem of Booker, Lee, Lowry-Duda, Seymour-Howell, and Zubrilina~\cite[Theorem~1.1]{BLLDSHZ2024} place a prime Hecke index on the scale of the analytic conductor.
Their underlying trace formulas accept general integral Hecke indices in the relevant sign.
This leads to two complementary formulations.
The first enlarges the outer prime average to the natural von Mangoldt average over all prime powers.
The second fixes one prime-power index and studies its conductor-scaled position.

\subsection{The Hecke index and the effective coordinate}

Let $r\geq2$ be fixed, let $P$ be an unramified prime, and put $Q=P^r$.
We use $\kappa$ for a holomorphic weight, reserving $r$ for the degree of the prime power.
In unitary normalization the Hecke relation is
\begin{equation}
\label{eqArchimedeanHeckeIdentity}
\lambda_f(P^r)=U_r\left(\frac{\lambda_f(P)}{2}\right),
\end{equation}
where $U_r$ is the Chebyshev polynomial of the second kind.
For the weight $2$ normalization used later, the same identity reads
\begin{equation}
\label{eqPrimePowerHeckeIdentity}
P^{r/2}\lambda_{\sym^r f}(P)=a_f(P^r)=a_f(Q).
\end{equation}
Thus a prime-power coefficient enters each trace formula through the single index $Q$.

For level $1$ holomorphic forms, the Eichler--Selberg formula used in~\cite[Section~2]{BBLLD2026} is valid for every positive integer $n$ and has the form
\begin{equation}
\label{eqHolomorphicGeneralTrace}
\operatorname{Tr}(T_n\mid S_\kappa(1))
=A_1(n)-A_2(n)-A_3(n)+A_4(n).
\end{equation}
The oscillatory term $A_2(n)$ is built from the discriminants $t^2-4n$ and the phase
\begin{equation*}
\phi_{t,n}=\arcsin\left(\frac{t}{2\sqrt n}\right).
\end{equation*}
Substitution of $n=Q$ therefore replaces $t^2-4P$ by $t^2-4Q$ and places the phase at $Q/\mathcal N(K)$, where $\mathcal N(K)$ is the analytic-conductor scale of the weight window.

For the Maass family, the Selberg trace formula in~\cite[Theorem~2.1]{BLLDSHZ2024} is valid for every nonzero integer $n$.
The identity
\begin{equation*}
a_j(-Q)=\varepsilon_j a_j(Q)
\end{equation*}
puts the root-number-weighted coefficient on the spectral side at $n=-Q$.
The leading hyperbolic terms then contain $t^2+4Q$ and
\begin{equation*}
G\left(\log\left(\frac{(|t|+\sqrt{t^2+4Q})^2}{4Q}\right)\right).
\end{equation*}
The same substitution places the Maass calculation at $Q/\mathcal N(R)$.
Thus both formulas retain their geometric phase and conductor exponent after $P$ is replaced by $P^r$.
They identify the effective coordinate
\begin{equation*}
y=\frac{P^r}{\mathcal C}
\end{equation*}
while the averaged leading term also depends on the sampling measure for the Hecke indices.

\subsection{A transfer criterion for the average over all prime powers}

The natural extension of the outer prime sum uses the von Mangoldt function.
For a compact interval $E=[a,b]\Subset(0,\infty)$ with $0<a<b$ and a conductor scale $\mathcal C$, put
\begin{equation}
\label{eqPrimeAndAllPowerMasses}
\Theta_E(\mathcal C)
=\sum_{\substack{P\ \mathrm{prime}\\ P/\mathcal C\in E}}\log P,
\qquad
\Psi_E(\mathcal C)
=\sum_{\substack{m\geq1,\ P\ \mathrm{prime}\\ P^m/\mathcal C\in E}}\log P
=\sum_{\substack{n\geq1\\ n/\mathcal C\in E}}\Lambda(n).
\end{equation}
Thus every index $n=P^m$ carries the weight $\Lambda(n)=\log P$.
The trace formula sees $n$ itself, so its discriminant is $t^2-4n$ in the holomorphic case and $t^2+4n$ in the Maass case.

\begin{proposition}
\label{propAllPrimePowersAverage}
Assume, in each case, the hypotheses and parameter range of the corresponding cited theorem.
For the holomorphic weight family, set $\mathcal C=\mathcal N(K)$ and
\begin{equation*}
\mathscr W_{K,H,\delta}
=\{(k,f):k\equiv2\delta\pmod4,\ |k-K|\leq H,\ f\in H_k(1)\},
\end{equation*}
where $H_k(1)$ is the normalized Hecke eigenbasis of $S_k(1)$ used in~\cite{BBLLD2026}.
For the Maass family, set $\mathcal C=\mathcal N(R)$ and
$\mathscr M_{R,H}=\{j:|r_j-R|\leq H\}$.
Each assertion below uses the corresponding choice of $\mathcal C$, $M_{\mathcal C}$, and $T_{\mathcal C}$.
In the corresponding case, define
\begin{equation*}
M_{\mathcal C}=
\begin{cases}
\#\mathscr W_{K,H,\delta},& \mathcal C=\mathcal N(K),\\
\#\mathscr M_{R,H},& \mathcal C=\mathcal N(R),
\end{cases}
\qquad
T_{\mathcal C}(n)=
\begin{cases}
\displaystyle\sum_{(k,f)\in\mathscr W_{K,H,\delta}}\lambda_f(n),
& \mathcal C=\mathcal N(K),\\[6pt]
\displaystyle\sum_{j\in\mathscr M_{R,H}}\varepsilon_j a_j(n),
& \mathcal C=\mathcal N(R).
\end{cases}
\end{equation*}
Define the aggregate contribution from proper prime powers by
\begin{equation}
\label{eqProperPowerCancellation}
R_{\geq2}
=\sum_{\substack{m\geq2,\ P\ \mathrm{prime}\\P^m/\mathcal C\in E}}
(\log P)T_{\mathcal C}(P^m).
\end{equation}
As $K\to\infty$ in the holomorphic case and $R\to\infty$ in the Maass case, the all-prime-power average has the same leading term as the corresponding prime-only average if and only if
\begin{equation}
\label{eqProperPowerCancellationCriterion}
R_{\geq2}
=o_E\left(M_{\mathcal C}\mathcal C^{1/2}\right).
\end{equation}
Under this equivalent condition, the holomorphic weight family satisfies
\begin{equation}
\label{eqAllPowerWeightAspect}
\frac{
\displaystyle\sum_{\substack{m\geq1,\ P\ \mathrm{prime}\\P^m/\mathcal C\in E}}
(\log P)\sum_{(k,f)\in\mathscr W_{K,H,\delta}}\lambda_f(P^m)
}{
\displaystyle\Psi_E(\mathcal C)
\sum_{(k,f)\in\mathscr W_{K,H,\delta}}1
}
=\frac{(-1)^\delta}{\sqrt{\mathcal C}}
\left(\frac{\nu(E)}{|E|}+o_{E,\varepsilon}(1)\right).
\end{equation}
The Maass family satisfies
\begin{equation}
\label{eqAllPowerMaassAspect}
\frac{
\displaystyle\sum_{\substack{m\geq1,\ P\ \mathrm{prime}\\P^m/\mathcal C\in E}}
(\log P)\sum_{j\in\mathscr M_{R,H}}\varepsilon_j a_j(P^m)
}{
\displaystyle\Psi_E(\mathcal C)
\sum_{j\in\mathscr M_{R,H}}1
}
=\frac{1}{\sqrt{\mathcal C}}
\left(\frac{\nu(E)}{|E|}+o_{E}(1)\right).
\end{equation}
The measure $\nu$ is the same measure that occurs in the prime-only statements.
\end{proposition}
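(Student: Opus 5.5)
The plan is to split the all-prime-power sum into its $m=1$ part and its $m\geq2$ part, and then compare normalizations. Write the numerator of~\eqref{eqAllPowerWeightAspect} (resp.~\eqref{eqAllPowerMaassAspect}) as
\begin{equation*}
S_E=\sum_{\substack{P\ \mathrm{prime}\\ P/\mathcal C\in E}}(\log P)\,T_{\mathcal C}(P)+R_{\geq2}.
\end{equation*}
In the holomorphic case the cited theorem~\cite[Theorem~1.1]{BBLLD2026}, and in the Maass case~\cite[Theorem~1.1]{BLLDSHZ2024}, give the prime-only asymptotic
\begin{equation*}
\sum_{P/\mathcal C\in E}(\log P)\,T_{\mathcal C}(P)
=\Theta_E(\mathcal C)\,M_{\mathcal C}\,\frac{(-1)^\delta}{\sqrt{\mathcal C}}
\left(\frac{\nu(E)}{|E|}+o(1)\right),
\end{equation*}
with no sign factor in the Maass case. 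I will take this as given, possibly after converting a prime-counting normalization into the $\log P$-weighted one by partial summation.

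Next I compare $\Theta_E$ and $\Psi_E$. Every proper power $P^m\asymp\mathcal C$ with $m\geq2$ has $P\leq(b\mathcal C)^{1/2}$. The number of exponents $m$ is $O(\log\mathcal C)$, so
\begin{equation*}
\Psi_E(\mathcal C)-\Theta_E(\mathcal C)=O\bigl(\mathcal C^{1/2}\log\mathcal C\bigr).
\end{equation*}
By the prime number theorem, $\Theta_E(\mathcal C)\sim|E|\,\mathcal C$. Hence $\Psi_E/\Theta_E=1+o(1)$, and replacing $\Theta_E$ by $\Psi_E$ in the denominator costs only $o(1)$ inside the bracket.

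Now divide $S_E$ by $\Psi_E M_{\mathcal C}$ and multiply by $\sqrt{\mathcal C}$. The prime part gives $\pm\nu(E)/|E|+o(1)$. The proper-power part contributes
\begin{equation*}
\frac{\sqrt{\mathcal C}\,R_{\geq2}}{\Psi_E M_{\mathcal C}}\asymp\frac{R_{\geq2}}{|E|\,M_{\mathcal C}\,\mathcal C^{1/2}},
\end{equation*}
using $\Psi_E\asymp|E|\,\mathcal C$. So the two averages share the leading term, meaning the difference of the normalized quantities is $o(1)$, exactly when $R_{\geq2}=o_E(M_{\mathcal C}\mathcal C^{1/2})$. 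Both directions of the equivalence follow from this single identity. The "only if" direction needs nothing beyond reading the identity backwards, since the prime part's asymptotic is unconditional in the cited range. Under the criterion, the displayed formulas~\eqref{eqAllPowerWeightAspect} and~\eqref{eqAllPowerMaassAspect} follow with the same measure $\nu$.

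The step needing the most care is the normalization. I need to check that the cited theorems are stated (or can be restated) for the $\log P$-weighted prime sum over $P/\mathcal C\in E$ divided by $\Theta_E$, and that their error terms are uniform enough to absorb the $\Psi_E/\Theta_E$ correction. If they are stated with a prime-counting average, I will pass between the two by Abel summation over the compact interval $E$. The density $\nu$ is continuous and $\log P=\log\mathcal C+O(1)$ on $E$, so this conversion changes nothing at leading order. I will not try to verify the criterion itself, since the proposition only asserts the equivalence.
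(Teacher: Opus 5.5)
Your proposal is correct and follows the paper's proof. You split the numerator as $S_{\mathbb P}+R_{\geq2}$, use the prime number theorem and a Chebyshev bound on proper powers to get $\Psi_E\asymp\mathcal C$ and $\Psi_E/\Theta_E\to1$, and then observe that the two normalized averages differ by $R_{\geq2}/(M_{\mathcal C}\Psi_E)+o(\mathcal C^{-1/2})$, which yields both directions of the equivalence at once. The paper records the sharper bound $\Psi_E-\Theta_E\ll_E\mathcal C^{1/2}$, but your extra factor $\log\mathcal C$ is harmless.
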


\begin{proof}
Suppress the argument $\mathcal C$ in $\Theta_E(\mathcal C)$ and $\Psi_E(\mathcal C)$ within this proof.
The all-power numerator is
\begin{equation*}
S_{\Lambda}
=S_{\mathbb P}+R_{\geq2},
\qquad
R_{\geq2}
=\sum_{\substack{m\geq2,\ P\ \mathrm{prime}\\P^m/\mathcal C\in E}}
(\log P)T_{\mathcal C}(P^m),
\end{equation*}
where $S_{\mathbb P}$ is the numerator in the published prime average.
The prime number theorem and the elementary bound for proper prime powers give
\begin{equation}
\label{eqProperPowerMass}
\Theta_E(\mathcal C)\sim |E|\mathcal C,
\qquad
\Psi_E(\mathcal C)-\Theta_E(\mathcal C)
\ll_E\mathcal C^{1/2}.
\end{equation}

Finally,
\begin{equation*}
\frac{S_{\Lambda}}{M_{\mathcal C}\Psi_E}
-\frac{S_{\mathbb P}}{M_{\mathcal C}\Theta_E}
=\frac{R_{\geq2}}{M_{\mathcal C}\Psi_E}
+\frac{S_{\mathbb P}}{M_{\mathcal C}}
\left(\frac1{\Psi_E}-\frac1{\Theta_E}\right).
\end{equation*}
The published prime asymptotic gives $S_{\mathbb P}/M_{\mathcal C}=O_E(\mathcal C^{1/2})$; together with~\eqref{eqProperPowerMass}, this makes the second term $O_E(\mathcal C^{-1})$.
Since $\Psi_E\asymp_E\mathcal C$, the difference of the two normalized averages is $o_E(\mathcal C^{-1/2})$ exactly when~\eqref{eqProperPowerCancellationCriterion} holds.
The two published prime asymptotics then give~\eqref{eqAllPowerWeightAspect} and~\eqref{eqAllPowerMaassAspect}.
\end{proof}

\begin{conjecture}
Under the hypotheses and parameter ranges of the corresponding prime-only theorems, equation~\eqref{eqProperPowerCancellationCriterion} holds for both families.
\end{conjecture}

The cited prime-only theorems supply the $m=1$ asymptotic.
The conjecture isolates the additional cancellation estimate required for the von Mangoldt extension.
The proper-power mass in~\eqref{eqProperPowerMass} lies at the critical scale of the murmuration numerator, so the mass estimate alone cannot yield the required small-$o$ bound.
Its proof requires a separate uniform analysis of the square and higher-power terms in the smoothed trace formulas, together with the sharp-cutoff errors.

This extension uses all powers in one von Mangoldt average.
A fixed-degree average with a conductor-linear Jacobian weight defines a separate statistic whose local factors and exceptional trace terms require their own analysis.
We now turn to a coordinatewise result in which one fixed Hecke index $Q=P^k$ is held during the level average.

\subsection{Exact transfer in the weight-two level aspect}

The family studied in this paper averages the level $N$ while keeping the Hecke index $Q=P^k$ fixed.
The Skoruppa--Zagier formula then gives an exact common profile at this fixed effective index.
For $k\geq2$, the formula used below is~\cite[Theorem~2.1]{KunduMuller2026}
\begin{equation}
\label{eqPrimePowerTraceRoadmap}
\sum_{f\in H_{\mathrm{new}}(N)}
P^{k/2}\lambda_{\sym^k f}(P)\varepsilon(f)
=
\frac12H_1(-4P^kN)
+\sum_{0<s\leq2\sqrt{P^k/N}}H_1(s^2N^2-4P^kN)
-\sum_{j=0}^kP^j.
\end{equation}
As proved in Section~\ref{sec:scaled-murmuration}, averaging over $N\in[X,X+Y]$ gives leading parabolic and hyperbolic contributions that depend on $k$ through
\begin{equation*}
y=\frac{P^k}{X}.
\end{equation*}
The term $P^k$ in the elementary divisor sum contributes $-\pi y$ to the profile.
The remaining powers $\sum_{0\leq j<k}P^j$, the levels divisible by $P$, and the window-approximation errors are lower order under the hypotheses of Theorem~\ref{thm:scaled-murmuration}.
If $y$ remains in a compact set $K\Subset(0,\infty)$, the averaged unnormalized coefficient has the form
\begin{equation}
\label{eqPrimePowerMainTermRoadmap}
\frac{
\displaystyle\sideset{}{'}\sum_{N\in[X,X+Y]}
\sum_{f\in H_{\mathrm{new}}(N)}
P^{k/2}\lambda_{\sym^k f}(P)\varepsilon(f)
}{
\displaystyle\sideset{}{'}\sum_{N\in[X,X+Y]}
\sum_{f\in H_{\mathrm{new}}(N)}1
}
=\mathcal M(y)+o_{k,K}(1).
\end{equation}
Since $P^{k/2}=\sqrt{Xy}$, division by this normalization gives
\begin{equation}
\label{eqPrimePowerNormalizedRoadmap}
\sqrt X\,\mathbb E_{X,Y}
\bigl[\varepsilon(f)\lambda_{\sym^k f}(P)\bigr]
=G\left(\frac{P^k}{X}\right)+o_{k,K}(1),
\qquad
G(y)=\frac{\mathcal M(y)}{\sqrt y}.
\end{equation}
For this weight $2$ squarefree-level family, every fixed power therefore samples the same leading function at its effective position.
The square case is developed in Kundu and M\"uller~\cite{KunduMuller2026}, and Section~\ref{sec:scaled-murmuration} supplies the uniform bookkeeping used here for every fixed $k\geq2$.
The prime case of Zubrilina~\cite{Zubrilina2025} supplies the $k=1$ baseline.

This coordinatewise first-moment statement motivates adding prime-power samples to a feature panel.
The admissible range and the error constants may depend on the fixed power.
Joint distributions, predictive performance, and finite-sample compactness remain separate questions.

\section{Hecke features and prediction under a fixed budget}
\label{secFeatures}

\subsection{Prime-power coordinates}

Let $E$ be an elliptic curve and let
\begin{equation*}
x_p(E)=\frac{a_p(E)}{\sqrt p}
\end{equation*}
at a prime of good reduction.  At a bad prime, the experimental panel records a distinguished missing value.  Every derived coordinate is formed before imputation and inherits the missing pattern of its base-prime coordinate.
The Hecke recurrence gives
\begin{equation*}
H_1(x)=x,\qquad
H_2(x)=x^2-1,\qquad
H_3(x)=x^3-2x.
\end{equation*}
Thus
\begin{equation*}
H_k\bigl(x_p(E)\bigr)
=
\frac{a_{p^k}(E)}{p^{k/2}}
\end{equation*}
for the good prime coordinates used in the experiment.  More generally, these functions are the Chebyshev characters
\begin{equation*}
H_k(x)=U_k(x/2).
\end{equation*}

Fix the largest observed prime $q$, and denote the ordinary panel by $X_1$.
It contains $H_1(x_p)$ for $p\leq q$.
A power $k$ contributes a derived coordinate only when
\begin{equation*}
p^k\leq q.
\end{equation*}
This rule gives every representation the same largest effective index.
In the experiment $q=7919$.
There are $1000$ ordinary prime coordinates, $23$ quadratic coordinates, and $8$ cubic coordinates.

\subsection{Information content and model class under augmentation}

Let $\mathbf X$ denote the ordinary prime vector, including its missing-value symbols, and let $\mathbf Z$ denote a block of derived coordinates.
For any discrete arithmetic label $L$, the deterministic relation $\mathbf Z=g(\mathbf X)$ gives
\begin{equation*}
I(L;\mathbf Z\mid\mathbf X)=0.
\end{equation*}
Thus an unrestricted predictor receives the same information from $\mathbf X$ and $(\mathbf X,\mathbf Z)$.

\begin{proposition}
\label{propRepresentation}
The raw symbol-valued augmented data and the ordinary prime data generate the same sigma-field.
Fix a training sample, give each derived coordinate the missing mask of its base coordinate, and fill every numeric column with its training-sample mean.
Within an affine linear model, the imputed panel $X_1+H_2$ has the same column span as the imputed ordinary prime panel together with the selected squares $x_p^2$.
The corresponding statement holds for $X_1+H_3$ and the selected cubes $x_p^3$.
Subsequent columnwise standardization preserves these affine spans for nonconstant columns.
\end{proposition}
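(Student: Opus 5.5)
The plan is to prove the sigma-field statement first and then the span statements column by column. For the first claim, write $\mathbf X$ for the symbol-valued prime vector, with entries $x_p$ or the missing symbol. The augmented vector is $(\mathbf X,\mathbf Z)$, where each coordinate of $\mathbf Z$ is $H_k(x_p)$ or the missing symbol. Each such coordinate is a fixed measurable function of one coordinate of $\mathbf X$: on the good-prime set it is the polynomial $H_k$, and on the missing set it is the constant symbol. Hence $\sigma(\mathbf X,\mathbf Z)\subseteq\sigma(\mathbf X)$. The reverse inclusion holds because $\mathbf X$ is a coordinate projection of $(\mathbf X,\mathbf Z)$. This is the deterministic relation $\mathbf Z=g(\mathbf X)$ already used for $I(L;\mathbf Z\mid\mathbf X)=0$, made explicit.

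For the span statement, fix the training sample. For a selected prime $p$ with $p^2\le q$, let $m_p$ be the missing indicator over the sample rows. Let $\mu_1,\mu_2$ be the training means of $x_p$ and $x_p^2$ over the observed rows. The imputed columns are then:
\begin{itemize}
\item $\tilde x_p=(1-m_p)x_p+m_p\mu_1$ for the ordinary prime coordinate;
\item $\tilde s_p=(1-m_p)x_p^2+m_p\mu_2$ for the imputed square;
\item $\tilde h_p=(1-m_p)(x_p^2-1)+m_p(\mu_2-1)$ for the derived coordinate.
\end{itemize}
The mean of $H_2(x_p)$ on the observed rows is $\mu_2-1$ by linearity, and the derived column shares the mask of $x_p$. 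It follows that $\tilde h_p=\tilde s_p-\mathbf 1$ identically on the sample rows. The affine span, which includes the intercept column $\mathbf 1$, is therefore the same whether we adjoin $\{\tilde h_p\}$ or $\{\tilde s_p\}$ to the imputed ordinary panel.

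The cubic case works the same way. Since $H_3(x)=x^3-2x$, linearity of the observed-row mean gives $\tilde h^{(3)}_p=\tilde c_p-2\tilde x_p$, where $\tilde c_p$ is the imputed cube. The column $\tilde x_p$ already lies in $X_1$, so the affine spans again coincide.

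The step that needs care is the interplay of mask and mean. The identities hold only because two conditions are met at once:
\begin{itemize}
\item the derived column is imputed with its own training mean;
\item that mean is an affine combination of the means of the monomials on exactly the same observed rows.
\end{itemize}
If the mask of a derived coordinate differed from its base coordinate's mask, a correction term $m_p\cdot(\text{const})$ would appear and would not lie in the span. I would therefore state explicitly that the mask is inherited, as the statement assumes.

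Finally, columnwise standardization replaces each nonconstant column $v$ by $(v-\bar v\mathbf 1)/s_v$ with $s_v>0$. This is an invertible affine change within $\operatorname{span}\{\mathbf 1,v\}$, so the affine span of the panel together with the intercept is unchanged. Constant columns are excluded because they would have $s_v=0$.
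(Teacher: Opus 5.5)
Your proof is correct and follows the paper's argument. The sigma-field equality comes from each derived coordinate being a fixed function of its base coordinate, together with the coordinate projection. The span equalities come from $\widetilde{H_2(x_p)}=\widetilde{x_p^2}-\mathbf 1$ and $\widetilde{H_3(x_p)}=\widetilde{x_p^3}-2\widetilde{x_p}$, via linearity of the training mean under the shared mask. The standardization claim holds because centering and rescaling a nonconstant column are invertible affine maps. Your explicit imputation formulas unpack the paper's one-line identities; since the imputation constants are fixed by the training sample, those identities in fact hold for every row, not only the training rows.
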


\begin{proof}
Every $H_k(x_p)$ is a polynomial in $x_p$, while the augmented panel contains the original vector.
The equality of sigma fields follows.
Let a tilde denote training-mean imputation with the common mask.
Linearity of the mean gives
\begin{equation*}
\widetilde{H_2(x_p)}=\widetilde{x_p^2}-1
\end{equation*}
and
\begin{equation*}
\widetilde{H_3(x_p)}=\widetilde{x_p^3}-2\widetilde{x_p}.
\end{equation*}
The constant function and $x_p$ are already present in the affine model.
Centering and rescaling a nonconstant column are invertible affine transformations, which proves the final statement.
\end{proof}

This equivalence concerns the unregularized column span. The $\ell_2$ regularization depends on the chosen basis after standardization.
We therefore describe the empirical gain as an effect of a polynomial representation organized by the Hecke relations.
The experiment evaluates this Hecke-organized basis; other quadratic or cubic parameterizations may behave similarly.

A linear score has a different restriction.
After augmentation it takes the form
\begin{equation}
\label{eqScore}
s(E)=\beta_0+
\sum_{p\leq q}\beta_{1,p}x_p(E)+
\sum_{p^2\leq q}\beta_{2,p}\bigl(x_p(E)^2-1\bigr)+
\sum_{p^3\leq q}\beta_{3,p}\bigl(x_p(E)^3-2x_p(E)\bigr).
\end{equation}
The new terms turn an additive linear boundary into an additive polynomial boundary at small primes.
A small number of such coordinates can matter if the residual arithmetic signal is concentrated at their effective positions.

The gain depends on more than the number of added columns.
The following standard identity identifies the relevant quantity.

\begin{proposition}
\label{propLinearIncrement}
Let $L$, $\mathbf X$, and $\mathbf Z$ have finite second moments, with $L$ centered.  Let $\mathbf Z^\perp$ be the residual after the best affine linear projection of $\mathbf Z$ on $\mathbf X$.
If $\operatorname{Var}(\mathbf Z^\perp)$ is invertible, the decrease in the best population affine squared error after adding $\mathbf Z$ is
\begin{equation}
\label{eqLinearGain}
\operatorname{Cov}(\mathbf Z^\perp,L)^{\mathsf T}
\operatorname{Var}(\mathbf Z^\perp)^{-1}
\operatorname{Cov}(\mathbf Z^\perp,L).
\end{equation}
\end{proposition}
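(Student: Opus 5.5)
The plan is the standard Gram--Schmidt / Frisch--Waugh argument for population least squares. First, reduce to centered variables. Adding an intercept to an affine predictor is the same as centering every regressor, so I replace $\mathbf X$, $\mathbf Z$ (and $L$, already centered) by their centered versions. Then the best affine predictor of $L$ from $\mathbf X$ is the orthogonal projection of $L$ onto the closed linear span $V_{\mathbf X}$ of the coordinates of $\mathbf X$ in $L^2$ of the underlying probability space. Finite second moments make this span well defined even if $\operatorname{Var}(\mathbf X)$ is singular: the projection exists and is unique as an element of $L^2$, regardless of how it is expressed in coordinates. The best squared error is then $\mathbb E[L^2]-\|\Pi_{\mathbf X}L\|^2$.

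Second, decompose the enlarged span. Define $\mathbf Z^\perp=\mathbf Z-\Pi_{\mathbf X}\mathbf Z$ coordinatewise. Then
\begin{equation*}
V_{(\mathbf X,\mathbf Z)}=V_{\mathbf X}\oplus V_{\mathbf Z^\perp},
\end{equation*}
and the sum is orthogonal, since each coordinate of $\mathbf Z^\perp$ is orthogonal to $V_{\mathbf X}$ by construction. Both spans are finite-dimensional, hence closed, so this is an honest orthogonal decomposition. It follows that
\begin{equation*}
\|\Pi_{(\mathbf X,\mathbf Z)}L\|^2=\|\Pi_{\mathbf X}L\|^2+\|\Pi_{\mathbf Z^\perp}L\|^2 .
\end{equation*}
The decrease in best error is therefore exactly $\|\Pi_{\mathbf Z^\perp}L\|^2$.

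Third, compute this projection explicitly. Write $\Pi_{\mathbf Z^\perp}L=c^{\mathsf T}\mathbf Z^\perp$. The normal equations $\mathbb E[\mathbf Z^\perp(L-c^{\mathsf T}\mathbf Z^\perp)]=0$ read $\operatorname{Var}(\mathbf Z^\perp)c=\operatorname{Cov}(\mathbf Z^\perp,L)$. Here I use that $\mathbf Z^\perp$ is centered, because $\mathbf Z$ and $\Pi_{\mathbf X}\mathbf Z$ are, so second moments coincide with (co)variances. Invertibility of $\operatorname{Var}(\mathbf Z^\perp)$ gives $c=\operatorname{Var}(\mathbf Z^\perp)^{-1}\operatorname{Cov}(\mathbf Z^\perp,L)$. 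Hence
\begin{equation*}
\|\Pi_{\mathbf Z^\perp}L\|^2=c^{\mathsf T}\operatorname{Var}(\mathbf Z^\perp)c,
\end{equation*}
which equals~\eqref{eqLinearGain}.

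The only point needing care is the first step when $\operatorname{Var}(\mathbf X)$ is singular, for instance with duplicated or imputed columns. I will handle it by working with projections onto spans rather than with explicit coefficient formulas, so that no inverse of $\operatorname{Var}(\mathbf X)$ is ever required. Everything else is routine Hilbert-space bookkeeping.
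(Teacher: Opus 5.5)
Your proposal is correct and follows the paper's own argument: the paper also uses that the residual of $L$ after the best affine fit on $\mathbf X$ is orthogonal to the affine span of $\mathbf X$, so adding $\mathbf Z$ amounts to fitting with $\mathbf Z^\perp$, and then it solves the normal equation and substitutes. Your orthogonal decomposition $V_{\mathbf X}\oplus V_{\mathbf Z^\perp}$ and your projection-based handling of a possibly singular $\operatorname{Var}(\mathbf X)$ spell out Hilbert-space bookkeeping that the paper's three-line proof leaves implicit.
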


\begin{proof}
The affine residual from the best fit on $\mathbf X$ is orthogonal to the affine span of $\mathbf X$.
Adding $\mathbf Z$ is therefore equivalent to fitting that residual with $\mathbf Z^\perp$.
The normal equation gives the coefficient
\begin{equation*}
\operatorname{Var}(\mathbf Z^\perp)^{-1}
\operatorname{Cov}(\mathbf Z^\perp,L).
\end{equation*}
Substitution into the squared error gives formula~\eqref{eqLinearGain}.
\end{proof}

The gain is controlled by residual signal and covariance; the nominal column count alone is insufficient.  Formula~\eqref{eqLinearGain} serves only as motivation.
Our classifiers use logistic loss, and their performance is measured on held-out test data.

\subsection{Profile signal and the size of a power block}

Fix $J$ and pairs $(k_j,P_j)$ with each $k_j$ fixed.  Put
\begin{equation*}
y_j=\frac{P_j^{k_j}}X,
\qquad
Z_j=\lambda_{\sym^{k_j}f}(P_j),
\qquad
\mathbf Z=(Z_1,\ldots,Z_J)^{\mathsf T},
\end{equation*}
and assume that all $y_j$ lie in a common compact subset of $(0,\infty)$.
Suppose that the covariance matrix of $\mathbf Z$ satisfies
$\Sigma_J=I_J+o(1)$ in operator norm.  Write
\begin{equation*}
\mathbf b_J=\operatorname{Cov}(\mathbf Z,\varepsilon).
\end{equation*}
Suppose also that
\begin{equation*}
\mathbf b_J
=
X^{-1/2}
\bigl(G(y_1),\ldots,G(y_J)\bigr)^{\mathsf T}
+o(X^{-1/2}).
\end{equation*}
Then the part of the optimal affine squared signal visible in these coordinates is
\begin{equation}
\label{eqProfileEnergy}
\mathbf b_J^{\mathsf T}\Sigma_J^{-1}\mathbf b_J
=
\frac{1}{X}\sum_{j=1}^J G(y_j)^2+o(X^{-1}).
\end{equation}
This follows directly by matrix inversion.  It explains why several weak profile samples may contribute together when their covariance is well conditioned.
Theorem~\ref{thmIntroProfile} supplies each one-coordinate signed mean.
To obtain the covariance vector above, one also needs
\begin{equation*}
\bigl(\mathbb E\varepsilon\bigr)
\bigl(\mathbb E Z_j\bigr)=o(X^{-1/2})
\end{equation*}
for every fixed $j$.
This centering condition, the covariance assumption, and the passage to finite-sample classification are separate inputs.
If the block is first residualized against $X_1$, the same calculation with residual covariances describes the incremental squared-error signal in Proposition~\ref{propLinearIncrement}.

The effective index rule also gives a simple count.  The number of available degree $k$ coordinates is
\begin{equation}
\label{eqPowerCount}
m_k(q)=\pi\bigl(q^{1/k}\bigr).
\end{equation}
The prime number theorem gives
\begin{equation*}
m_k(q)\sim \frac{kq^{1/k}}{\log q}
\end{equation*}
and
\begin{equation*}
\frac{m_k(q)}{\pi(q)}
\sim kq^{1/k-1}.
\end{equation*}
For $q=7919$, the exact values are $m_1=1000$, $m_2=23$, and $m_3=8$.
Higher degree blocks become very sparse on the effective integer scale.
Equation~\eqref{eqProfileEnergy} shows why the useful quantity is a sum of squared residual signals.
The count in formula~\eqref{eqPowerCount} is only one part of that quantity.

\subsection{Prediction model}

For a binary label $L$, the fitted model has the form
\begin{equation*}
\widehat{\mathbb P}(L=1\mid V)
=
\frac{\exp(\alpha+\beta^{\mathsf T}V)}
{1+\exp(\alpha+\beta^{\mathsf T}V)}.
\end{equation*}
For the three rank classes, we use multiclass logistic regression with a one-versus-rest scheme.  Every model uses $\ell_2$ regularization.
Missing values and scales are estimated from the training data only.

The main performance measure is balanced accuracy.  If $A_0$ is the balanced accuracy of the ordinary prime model and $A_1$ is the accuracy after augmentation, we report
\begin{equation}
\label{eqErrorReduction}
\mathcal R_{\mathrm{err}}
=
100\frac{A_1-A_0}{1-A_0}.
\end{equation}
On the class-balanced tests used below, this is the percentage of baseline errors removed by the augmented model.
We also report the Matthews correlation coefficient and the area under the receiver operating characteristic curve when they help to check the direction of the result.

For a binary task, balanced accuracy is the mean of the two class recalls.
For the three-class task, it is the mean of the three class recalls.
Since every test split is exactly balanced, balanced accuracy equals ordinary accuracy in the reported experiments.
The error counts can therefore be read directly from the displayed accuracy.

\section{Scaled murmuration in a short level window}
\label{sec:scaled-murmuration}

In this section we prove a correlation between the coefficients of a fixed symmetric power and the root number of the underlying modular form.
We work with weight $2$, squarefree level, and trivial nebentypus, as in Zubrilina~\cite{Zubrilina2025} and Kundu--M\"uller~\cite{KunduMuller2026}.
Throughout the section, $k$ is fixed.

The proof has three main steps.
The Skoruppa--Zagier trace formula expresses the root-number-weighted Hecke trace as a parabolic class-number term, a hyperbolic class-number term, and an elementary term.
We average the first two terms separately over the short
squarefree level window and then divide by the total number of newforms.
After writing $Q=P^k$, the leading terms depend on $k$ only through $Q$, while the normalization is
$P^{k/2}=\sqrt Q$.
This produces the same function of $y=Q/X$ for every fixed $k$.
The admissible range and the error term may depend on $k$.

For $P\nmid N$, the $P$-th coefficient of the symmetric-power $L$-function satisfies
\[
a_{\sym^k f}(P)=a_f(P^k).
\]
We normalize it by
\[
\lambda_{\sym^k f}(P)
:=
\frac{a_{\sym^k f}(P)}{P^{k/2}}
=
\frac{a_f(P^k)}{P^{k/2}}.
\]
When $P\mid N$, we use the coefficient given by the local symmetric-power Euler factor.
In this case
\[
P^{k/2}\lambda_{\sym^k f}(P)=a_f(P)^k.
\]

Let
$S_{\mathrm{new}}(N)=S_2^{\mathrm{new}}(\Gamma_0(N))$ with trivial nebentypus, and let $H_{\mathrm{new}}(N)$ be its normalized Hecke eigenbasis.
The symbol $\varepsilon(f)$ denotes the root number of $f$.

Set
\[
D_0=\prod_p\left(1-\frac{1}{p(p+1)}\right).
\]

\begin{theorem}\label{thm:scaled-murmuration}
Let $k\geq 2$ be fixed.  Let $P\neq 2$ be a prime, and suppose that $P$, $X$, and $Y$ tend to infinity.
Assume that
\[
Y=(1+o(1))X^{1-\delta_1},
\qquad
P^k\ll X^{1+\delta_2},
\]
where
\[
0<2\delta_2<\delta_1<\frac18,
\qquad
(k-1)\delta_2<1.
\]
Put
\[
\eta=\frac{\delta_1}{2}-\delta_2,
\qquad
y=\frac{P^k}{X}.
\]
Let
\[
A=\prod_p\left(1+\frac{p}{(p+1)^2(p-1)}\right),
\]
\[
B=\prod_p\left(1-\frac{p}{(p^2-1)^2}\right),
\]
and
\[
C(r)=\prod_{p\mid r}
\left(1+\frac{p^2}{p^4-2p^2-p+1}\right).
\]
Then, for every $\varepsilon>0$,
\begin{multline*}
\frac{
\displaystyle
\sideset{}{'}\sum_{N\in[X,X+Y]}
\ \sum_{f\in H_{\mathrm{new}}(N)}
P^{k/2}\lambda_{\sym^k f}(P)\,\varepsilon(f)
}{
\displaystyle
\sideset{}{'}\sum_{N\in[X,X+Y]}
\ \sum_{f\in H_{\mathrm{new}}(N)}1
}
\\
=
\frac{12}{\pi D_0}
\left(
A\sqrt y
+
B\sum_{1\leq r\leq 2\sqrt y}
C(r)\sqrt{4y-r^2}
-\pi y
\right)
+
O_{k,\varepsilon}\left(
X^{-\eta+\varepsilon}
+X^\varepsilon\frac{\sqrt y}{P}
+\frac{1+y}{P}
\right).
\end{multline*}
Here the prime indicates that the sum is restricted to squarefree integers $N$.
\end{theorem}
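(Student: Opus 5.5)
We follow Zubrilina's method and start from the Skoruppa--Zagier trace formula~\eqref{eqPrimePowerTraceRoadmap} of Kundu--M\"uller, applied with $Q=P^k$ for each squarefree $N\in[X,X+Y]$ with $P\nmid N$. The levels with $P\mid N$ are treated separately. For these levels the coefficient is $a_f(P)^k=(\pm1)^k$, so $|P^{k/2}\lambda_{\sym^k f}(P)|\le1$. There are $O(Y/P)$ such levels, each with $O(N)$ newforms. After division by the total count $\asymp D_0XY/12$, they contribute $O(1/P)$, which is absorbed in $(1+y)/P$.

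For the remaining levels we sum the right side of~\eqref{eqPrimePowerTraceRoadmap} over squarefree $N$ and split it into three pieces. The elementary piece is $-\sum_{j\le k}P^j$. The term $P^k=yX$ gives $-yX$ per level, and dividing by the mean number of newforms, $D_0N/12$ on average over squarefree $N$, gives $-12y/D_0=\frac{12}{\pi D_0}(-\pi y)$. The lower powers give $O(P^{k-1}/X)=O(y/P)$. The denominator is computed from the standard newform dimension $\sum'_{N\in[X,X+Y]}\dim S_{\mathrm{new}}(N)=\frac{D_0}{12}XY\,(1+O(X^{-\delta_1}))\cdot\frac{6}{\pi^2}$, tracking the squarefree density together with the newform Euler factor so that exactly $D_0$ appears.

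The parabolic term is $\frac12H_1(-4QN)$, and the hyperbolic terms are $H_1(s^2N^2-4QN)$ for $0<s\le2\sqrt{Q/N}$. Since $N$ varies over a window of relative length $X^{-\delta_1}$, we have $2\sqrt{Q/N}=2\sqrt y+O(\sqrt yX^{-\delta_1})$. The range of $s$ is therefore fixed as $1\le s\le2\sqrt y$, except for $s$ near the endpoint, where the discriminant is small and the class number is small. For each fixed $s=r$ we average the Hurwitz class number $H(r^2N^2-4QN)$ over squarefree $N$ in the window. We write it via Dirichlet's class number formula as $\sqrt{|D|}\,L(1,\chi_D)$ times local corrections, expand $L(1,\chi_D)$ as a short character sum of length $X^{\theta}$, and swap the sums. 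The sum over squarefree $N$ in arithmetic progressions modulo $m^2$ then produces local densities. The Euler product of these densities gives $B\,C(r)$ for $r\ge1$ and $A$ for the parabolic term. This is the computation Zubrilina and Kundu--M\"uller carry out, and it depends on $Q$ only through the congruence conditions. Since $P$ is odd and large, these conditions are modulo primes $\ne P$ and $Q\bmod p$ enters uniformly (the condition $P\nmid$ the modulus is used here). The archimedean factor $\sqrt{4QN-r^2N^2}=N\sqrt{4y-r^2}\,(1+O(X^{-\delta_1}))$ supplies the profile. The $\frac1\pi$ comes from $h=\sqrt{|D|}L(1,\chi)/\pi$, which gives the stated shape.

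The main obstacle is the error analysis of the character-sum tail uniformly for $Q$ as large as $X^{1+\delta_2}$, where the discriminants have size $QX\approx X^{2+\delta_2}$. We would truncate $L(1,\chi_D)$ at length $U$. The tail is bounded using Pólya--Vinogradov or a large-sieve bound over the $Y$ values of $N$, which gives a saving of roughly $(QX)^{1/4}U^{-1/2}$ relative to $Y$. The short-window condition $Y=X^{1-\delta_1}$ competes with the growth of $Q$. Optimizing $U$ should give the saving $X^{-\delta_1/2+\delta_2+\varepsilon}=X^{-\eta+\varepsilon}$. The conditions $2\delta_2<\delta_1<1/8$ ensure $\eta>0$ and control the squarefree-sieve error modulo $m^2$, while $(k-1)\delta_2<1$ keeps $P\to\infty$. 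The endpoint $s$ near $2\sqrt{Q/N}$ and the parabolic mismatch produce the $X^\varepsilon\sqrt y/P$ term, from discriminants divisible by $P$ and divisor-bound estimates on $H_1$. We expect to reuse the Kundu--M\"uller bookkeeping with $P^2$ replaced by $P^k$, checking at each step that the only $k$-dependence enters through $Q$ and the implied constants.
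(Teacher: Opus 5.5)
You have the paper's skeleton: Skoruppa--Zagier with $Q=P^k$, discarding $P\mid N$, the elementary term giving $-\pi y$, and the class number formula with a truncated $L$-series and squarefree progressions producing $A$ and $BC(r)$. But the step you call the main obstacle does not close with the tools you name.

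\begin{itemize}
\item Pólya--Vinogradov, applied pointwise in $N$, controls $\sum_{n>U}\chi_D(n)/n$ only for $U\gg\sqrt{QX}\geq X/2$. Your claimed saving $(QX)^{1/4}U^{-1/2}$ needs the same range.
\item Heath-Brown's quadratic large sieve over the $Y$ discriminants $N(r^2N-4Q)/d^2$, which are spread over a range of length $\asymp QX$, is nontrivial only for $n\gg QX/Y$.
\item Hooley's estimate in progressions modulo $f_nd^2n$ gives the main term only for $n\lesssim Y^{2/3}$.
\end{itemize}
The range $Y^{2/3}\lesssim n\lesssim\sqrt{QX}$ therefore needs cancellation in the $N$-sum for each individual modulus $n$. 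The paper gets this from the completed character-sum bound of Zubrilina's Lemma~3.19, which saves $\sqrt q$ at each odd prime $q$ dividing $n$ to an odd power. It is made prime-deleted in Lemma~\ref{lem:prime-deleted-character-sum} and summed in Lemma~\ref{lem:large-n}. Hooley is used only for $n\le Y^{5/9}$ and $d\le Y^{1/18}$, and Pólya--Vinogradov only beyond $T=(QXY)^{2/5}$. Your closing remark about reusing the Kundu--Müller bookkeeping would import this lemma, but the mechanism you actually propose does not supply it.

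All of these character-sum errors are strictly smaller than $X^{-\eta}$, so $X^{-\eta}$ does not come from optimizing $U$. It comes from freezing $\sqrt{4QN-r^2N^2}$ at $N=X$. The variation is $\ll\sqrt Q\,Y/\sqrt X+r\sqrt{XY}$, and summing $r\sqrt{XY}$ over $r\le 2\sqrt y$ gives exactly $P^kY^{1/2}/X^{3/2}\ll X^{-\eta}$. Your uniform relative error $1+O(X^{-\delta_1})$ for this factor is false when $4y-r^2$ is small.

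The $P$-adic bookkeeping also needs repair.

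\begin{itemize}
\item \emph{Coprimality in the hyperbolic term.} You assume the moduli avoid $P$, but you must first prove $P\nmid rd$. This is Lemma~\ref{lem:p-coprime}: $r\le 2P^{k/2}/\sqrt X<P$ gives $P\nmid r$, and then $d^2\mid r^2N-4Q$ forces $P\nmid d$. This is where $(k-1)\delta_2<1$ is used, together with securing $y/P\to0$. It is not there to keep $P\to\infty$, which is assumed.
\item \emph{The Euler factor at $P$.} The moduli $n$ from the $L$-series still include powers of $P$. Since $N(r^2N-4Q)/d^2\equiv (rN/d)^2\pmod P$, the factor at $P$ is $P^2/(P^2-1)$ rather than $B_P$. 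This changes the constant only by $O(P^{-2})$, but it has to be computed.
\item \emph{The parabolic term.} Here $Q$ does not enter only through congruences: $-4P^kN$ has $P$-part of conductor $P^{\lfloor k/2\rfloor}$. The paper uses $h(-P^{2m}N)=P^{m-1}h(-P^2N)$ and $h(-P^{2m+1}N)=P^m h(-PN)$ (Lemma~\ref{lem:class-number-reduction}) to reduce to the known averages at $P$ and $P^2$. The lower terms of the resulting geometric sums, together with the first base error, are what produce $X^\varepsilon\sqrt y/P$. The hyperbolic endpoint, to which you attribute this term, contributes less than $X^{-\eta}$.
\item \emph{Prime-deleted inputs.} Once you discard the levels with $P\mid N$, both Hooley's estimate and the character-sum bound must be re-derived with the condition $P\nmid N$, for instance via $\mu^2(M)\mathbf 1_{P\nmid M}=\sum_{j\ge0}(-1)^j\mathbf 1_{P^j\mid M}\,\mu^2(M/P^j)$.
\end{itemize}
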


\begin{remark}
The factor $P^{k/2}$ is the same normalization as the factors $\sqrt P$ and $P$ in the results for $P$- and $P^2$-coefficients, respectively.
Theorem~\ref{thm:scaled-murmuration} uses the root number of the underlying form $f$.
For even $k$, the resulting correlation is a family-level effect of selection by the underlying conductor.
The statement concerns averages across the family and treats each fixed $k$ separately; its constants may grow with $k$.
\end{remark}

\subsection{The trace formula}

Fix a squarefree positive integer $N$.
Recall that $-\varepsilon(f)$ is the eigenvalue of  $f$ under the Atkin--Lehner involution $W_N$.
Thus, when $P\nmid N$,
\[
\operatorname{tr}\left(-T_{P^k}W_N\mid S_{\mathrm{new}}(N)\right)
=
\sum_{f\in H_{\mathrm{new}}(N)}
a_f(P^k)\varepsilon(f)
=
\sum_{f\in H_{\mathrm{new}}(N)}
P^{k/2}\lambda_{\sym^k f}(P)\varepsilon(f).
\]

We use the following special case of the Skoruppa--Zagier trace formula~\cite[\S2, formulas~(5) and~(7)]{SkoruppaZagier1988}; see also~\cite[Theorem~2.1]{KunduMuller2026}.
Here $H_1$ denotes the Hurwitz class number in the convention of the cited formula: classes with automorphism groups of orders $4$ and $6$ have weights $1/2$ and $1/3$, respectively, and the remaining classes have weight $1$.

\begin{proposition}[Skoruppa--Zagier]\label{prop:trace-formula}
Suppose that $P\nmid N$.  Then
\begin{equation*}
\sum_{f\in H_{\mathrm{new}}(N)}
P^{k/2}\lambda_{\sym^k f}(P)\varepsilon(f)
=
\frac12H_1(-4P^kN)
+
\sum_{0<r\leq 2P^{k/2}/\sqrt N}
H_1(r^2N^2-4P^kN)
-
\sum_{j=0}^kP^j.
\end{equation*}
\end{proposition}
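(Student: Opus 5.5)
The plan is to specialize the general Skoruppa--Zagier formula for $\operatorname{tr}(T_n\circ W_N)$ to weight $2$, squarefree $N$, and $n=P^k$ with $P\nmid N$. That formula is stated on the full space $S_2(\Gamma_0(N))$, so the argument has two parts. First, identify each term of the specialized formula with a term on the right-hand side of Proposition~\ref{prop:trace-formula}. Second, show that the oldforms contribute nothing to the trace of $T_{P^k}W_N$. For the left-hand side, $P\nmid N$ gives $a_f(P^k)=P^{k/2}\lambda_{\sym^k f}(P)$. Also $W_Nf=-\varepsilon(f)f$ on newforms, so the left-hand side equals $\operatorname{tr}(-T_{P^k}W_N\mid S_{\mathrm{new}}(N))$, as already recorded before the proposition.

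\textbf{Step 1: the oldforms.} For squarefree $N=MM'$ and a newform $g$ of level $M$, the old space spanned by $g(dz)$ with $d\mid M'$ is stable under $T_{P^k}$, since $P\nmid N$. On this space $T_{P^k}$ acts by the scalar $a_g(P^k)$, and $W_N$ factors as $W_M W_{M'}$. The involution $W_M$ acts on $g$ by its Atkin--Lehner sign. For each prime $\ell\mid M'$, the operator $W_\ell$ swaps $g(z)$ and $g(\ell z)$ up to a scalar. The two-dimensional factor attached to $\ell$ is therefore antidiagonal and has trace $0$. Since the old space is a tensor product over $\ell\mid M'$ and $M'>1$, the trace of $T_{P^k}W_N$ on it vanishes. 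Summing over all $M\mid N$ with $M<N$ gives
\[
\operatorname{tr}\bigl(T_{P^k}W_N\mid S_2^{\mathrm{new}}(\Gamma_0(N))\bigr)=\operatorname{tr}\bigl(T_{P^k}W_N\mid S_2(\Gamma_0(N))\bigr).
\]
Here I will check that the Skoruppa--Zagier normalization of $W_N$ for squarefree $N$ agrees with the one defining $\varepsilon(f)$, so that the sign convention is consistent.

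\textbf{Step 2: specializing the full-space formula.} For squarefree $N$ and weight $2$, the elliptic and hyperbolic terms in \cite[\S2, (5),(7)]{SkoruppaZagier1988} are a sum of Hurwitz class numbers $H_1(r^2N^2-4nN)$. The sum runs over integers $r$ with $r^2N^2\le 4nN$; the factor $N$ in $rN$ reflects that the relevant matrices have trace divisible by $N$. This sum is symmetric under $r\mapsto -r$, so it equals $\tfrac12H_1(-4nN)$ plus the sum over $0<r\le 2\sqrt{n/N}=2P^{k/2}/\sqrt N$. Whether the boundary term $r^2N=4P^k$ can occur, and how it is weighted, must be checked against the convention used for $H_1$. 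Since $N$ is squarefree and odd powers of $P$ are not squares, it can only happen in degenerate cases, which I will verify directly. In weight $2$ the remaining piece of the formula is the Eisenstein correction. For $\gcd(n,N)=1$ I expect it to contribute $-\sigma_1(P^k)=-\sum_{j=0}^kP^j$ after the overall sign $-T_{P^k}W_N$ is applied. The weight-$k$ identity-term contributions vanish for weight $2$.

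\textbf{Main obstacle.} The main difficulty is bookkeeping of conventions: the sign of $W_N$, the weights $1/2$ and $1/3$ built into $H_1$, the factor $\tfrac12$ on the $r=0$ term, and the exact form of the weight-$2$ Eisenstein correction. I would first check the assembled identity numerically, for example with $N=11$ and $P=3$, $k=2$, where the unique newform has known $a(9)$ and $\varepsilon=+1$. I would then compare with \cite[Theorem~2.1]{KunduMuller2026}, which states the same formula, to confirm the signs.
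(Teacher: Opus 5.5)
Your proposal is correct and follows essentially the same route as the paper. The paper gives no independent argument: it quotes this special case of the Skoruppa--Zagier trace formula, as recorded in Kundu--M\"uller, Theorem~2.1. Your vanishing of $\operatorname{tr}(T_{P^k}W_N)$ on oldforms for squarefree $N$, the $r\mapsto -r$ symmetrization, and the Eisenstein correction $-\sigma_1(P^k)$ are exactly the details behind that citation.

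One correction to your bookkeeping. The identity term and the boundary term $r^2N=4P^k$ are absent, but not because of the weight or the parity of $k$. In each case, the conditions $P\nmid N$ and $N$ squarefree force $N=1$. For $N=1$ the displayed identity actually fails, because additional hyperbolic contributions appear (for example $P=3$, $k=1$ gives right-hand side $-1$). So the statement should be read for $N>1$, which is harmless for the paper's large levels.
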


We write $h(D)$ for the Gauss class number of primitive positive-definite binary quadratic forms of discriminant $D<0$, and set $h(D)=0$ for other integers.
For $D>0$, the relation between the Hurwitz and Gauss class numbers is
\[
H_1(-D)
=
\sum_{e^2\mid D}h(-D/e^2)+O(1),
\]
where the uniform $O(1)$ term accounts for the exceptional automorphism weights and vanishes unless $D=3s^2$ or $4s^2$.

Since $P\nmid N$ and $N$ is squarefree, the square divisors of $4P^kN$ that contribute to the Hurwitz class number are $P^{2j}$ and $4P^{2j}$.  Consequently,
\begin{equation}\label{eq:parabolic-decomposition}
\frac12H_1(-4P^kN)
=
\frac12
\sum_{j=0}^{\lfloor k/2\rfloor}
\left(
h(-P^{k-2j}N)+h(-4P^{k-2j}N)
\right)
+O_k(1).
\end{equation}
The exceptional forms of discriminant $-3m^2$ and $-4m^2$ are included in the $O_k(1)$ term.

\subsection{The parabolic term}

We first reduce the powers of $P$ occurring in~\eqref{eq:parabolic-decomposition}.

\begin{lemma}\label{lem:class-number-reduction}
Let $P\nmid N$, with $P$ odd, $N$ squarefree, and $PN>4$.  For $m\geq1$, whenever the arguments are discriminants, one has
\[
h(-P^{2m}N)=P^{m-1}h(-P^2N),
\qquad
h(-4P^{2m}N)=P^{m-1}h(-4P^2N),
\]
and
\[
h(-P^{2m+1}N)=P^mh(-PN),
\qquad
h(-4P^{2m+1}N)=P^mh(-4PN).
\]
\end{lemma}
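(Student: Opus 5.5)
We apply the classical formula for the class number of a non-maximal order. If $D<0$ is a discriminant and $f\geq1$, then
\[
h(Df^2)=h(D)\,\frac{f}{[\mathcal O_D^\times:\mathcal O_{Df^2}^\times]}\prod_{p\mid f}\left(1-\left(\frac{D}{p}\right)\frac1p\right),
\]
where $(\cdot/p)$ is the Kronecker symbol. This is the standard conductor formula, applied with base discriminant $D$ (not necessarily fundamental) and conductor $f$. We take $f=P^{m-1}$ throughout; no other conductor is needed. The base discriminants are $D\in\{-P^2N,\,-4P^2N\}$ in the even case and $D\in\{-PN,\,-4PN\}$ in the odd case. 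The hypothesis ``whenever the arguments are discriminants'' means $-P^{j}N\equiv 0,1\pmod 4$ for the first entry of each pair, and this congruence is stable under multiplication by $P^2$. So $Df^2$ is a discriminant if and only if $D$ is.

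The key point is that the Euler factor is trivial. The only prime dividing $f$ is $P$, and $P\mid D$ in all four cases, so $(D/P)=0$ and the factor $1-(D/P)/P$ equals $1$. (When $m=1$ we have $f=1$ and there is nothing to prove.) Next, the unit index equals $1$. In all four cases $|D|\geq PN>4$, so $D\notin\{-3,-4\}$. The order of discriminant $D$ therefore has unit group $\{\pm1\}$, and so does every suborder. This is exactly where the hypothesis $PN>4$ is used.

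With both factors equal to $1$, the formula gives $h(Df^2)=P^{m-1}h(D)$. For the even case, substituting $D=-P^2N$ and $D=-4P^2N$ gives $h(-P^{2m}N)=P^{m-1}h(-P^2N)$ and $h(-4P^{2m}N)=P^{m-1}h(-4P^2N)$. For the odd case, substituting $D=-PN$ and $D=-4PN$ with $f=P^{m-1}$ gives $h(-P^{2m-1}N)=P^{m-1}h(-PN)$. Replacing $m$ by $m+1$ yields $h(-P^{2m+1}N)=P^m h(-PN)$, and likewise $h(-4P^{2m+1}N)=P^m h(-4PN)$.

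The main obstacle is bookkeeping rather than analysis. We must make sure the conductor formula is stated for a general base discriminant and not only a fundamental one. This follows by applying the fundamental-discriminant version to both $Df^2$ and $D$ and taking the quotient: the Euler factors at primes dividing the conductor of $D$ cancel, and the unit indices multiply consistently. We must also check the degenerate case where the argument fails to be a discriminant. There, $h$ is defined to be $0$, and as noted above both sides of each identity vanish simultaneously.
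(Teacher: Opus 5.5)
Your proof is correct and is essentially the paper's argument: the class number formula for quadratic orders, with no new Euler factor because $P$ already divides the base discriminant, and an unchanged unit index because $PN>4$. Writing it as a single relative formula with the non-fundamental base discriminant $D$ and conductor $P^{m-1}$, instead of repeatedly multiplying the conductor by $P$, is only a repackaging. It does make the unit-index step a little cleaner: in the even case with $N=3$ the fundamental discriminant is $-3$, yet the order of discriminant $D=-3P^2$ has only the units $\pm1$.
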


\begin{proof}
This follows from the class number formula for quadratic orders.
If $\Delta_K$ is a fundamental discriminant and $f\geq1$, then
\[
h(\Delta_Kf^2)
=
\frac{h(\Delta_K)f}{[\mathcal O_K^\times:\mathcal O_f^\times]}
\prod_{q\mid f}
\left(1-\frac{1}{q}\left(\frac{\Delta_K}{q}\right)\right).
\]
For the even powers in the statement, the conductor already contains $P$, and for the odd powers, $P\mid\Delta_K$.
Thus multiplication of the conductor by $P$ does not introduce a new Euler factor and multiplies the class number by $P$.
The condition $PN>4$ excludes the exceptional unit groups of discriminants $-3$ and $-4$, so the unit indices remain unchanged as the conductor is multiplied by $P$.
\end{proof}

The estimates needed for $h(-PN)$ and $h(-P^2N)$ are
Proposition~3.1 of Zubrilina~\cite{Zubrilina2025} and
Proposition~3.1 of Kundu--M\"uller~\cite{KunduMuller2026}, respectively.
Combining them with Lemma~\ref{lem:class-number-reduction} gives the following.

\begin{proposition}\label{prop:parabolic-average}
Under the hypotheses of Theorem~\ref{thm:scaled-murmuration}, the following estimates hold for every $\varepsilon>0$ as $X\to\infty$.
Let
\[
\mathcal P_k
=
\frac{\zeta(2)\pi}{XY}
\sideset{}{'}\sum_{\substack{N\in[X,X+Y]\\P\nmid N}}
\frac12H_1(-4P^kN).
\]
If $k$ is even, then
\[
\mathcal P_k
=
\frac{AP^{k/2}}{\sqrt X}
+
O_{k,\varepsilon}\left(
(PXY)^\varepsilon
\left(
\frac{P^{k/2-1}}{\sqrt X}
+
\frac{P^{k/2+1/6}X^{1/12}}{Y^{5/6}}
+
\frac{P^{k/2}Y}{X^{3/2}}
\right)
\right).
\]
If $k$ is odd, then
\[
\mathcal P_k
=
\frac{AP^{k/2}}{\sqrt X}
+
O_{k,\varepsilon}\left(
(PXY)^\varepsilon
\left(
\frac{P^{k/2-1}}{\sqrt X}
+
\frac{P^{k/2+3/38}}{Y^{16/19}}
+
\frac{P^{k/2}Y}{X^{3/2}}
\right)
\right).
\]
\end{proposition}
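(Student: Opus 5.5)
The plan is to reduce $\mathcal P_k$ to the two known averages, Proposition~3.1 of Zubrilina for $h(-PN)$ and Proposition~3.1 of Kundu--M\"uller for $h(-P^2N)$, by applying Lemma~\ref{lem:class-number-reduction} term by term in the decomposition~\eqref{eq:parabolic-decomposition}. The $O_k(1)$ error there contributes $O_k(\pi\zeta(2)/X)$ after averaging, since the sum over $N$ has $O(Y)$ terms and we divide by $XY$. This is negligible against every stated error term.

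For even $k=2m$, the summands are $h(-P^{2m-2j}N)+h(-4P^{2m-2j}N)$ for $0\le j\le m$.
\begin{itemize}
\item The term $j=m$ is $h(-N)+h(-4N)$.
\item For $j<m$, the lemma turns each term into $P^{m-j-1}\bigl(h(-P^2N)+h(-4P^2N)\bigr)$.
\end{itemize}
The dominant piece is $j=0$, namely $P^{m-1}\cdot\frac12\bigl(h(-P^2N)+h(-4P^2N)\bigr)$. Its average is $P^{m-1}$ times the Kundu--M\"uller average. In their normalization (the same $\zeta(2)\pi/(XY)$ prefactor) this gives $AP/\sqrt X$ plus their error term, which I expect to be $(PXY)^\varepsilon\bigl(\sqrt X^{-1}+PX^{1/12}P^{1/6}/Y^{5/6}\cdots\bigr)$. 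Multiplying by $P^{m-1}$ produces $AP^{k/2}/\sqrt X$ and the displayed errors.

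The terms with $1\le j<m$ carry an extra factor $P^{-j}$ relative to the main one. Since the Kundu--M\"uller average is $O((PXY)^\varepsilon P/\sqrt X)$, they contribute $O(P^{k/2-1}(PXY)^\varepsilon/\sqrt X)$.

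The term $j=m$ is handled by the trivial bound $h(-N),h(-4N)\ll N^{1/2+\varepsilon}$. Its average is $O(X^{-1/2+\varepsilon})\le P^{k/2-1}X^{-1/2+\varepsilon}$ because $k\ge2$.

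For odd $k=2m+1$, every term has an odd power of $P$, so the lemma gives $P^{m-j}\bigl(h(-PN)+h(-4PN)\bigr)$. The main term $j=0$ is $P^m$ times Zubrilina's average, namely $A\sqrt P/\sqrt X$ with error $(PXY)^\varepsilon\bigl(P^{-1/2}X^{-1/2}+P^{1/2+3/38}Y^{-16/19}+\sqrt P\,Y X^{-3/2}\bigr)$. Multiplication by $P^m=P^{(k-1)/2}$ gives the stated expansion, because $P^{m}P^{-1/2}=P^{k/2-1}$. The terms with $j\ge1$ lose at least a factor $P$ and are absorbed into $P^{k/2-1}/\sqrt X$.

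The main obstacle is bookkeeping, not new analysis. I must check two things.
\begin{itemize}
\item The cited propositions use the same normalization as $\mathcal P_k$, including the factor $\tfrac12$, the restriction to squarefree $N$ with $P\nmid N$, and the combination of $h(-D)$ with $h(-4D)$. In particular the constant $A$ must match in both the $P$ and $P^2$ cases, so that the $\mathcal M$ leading coefficient is the same for all parities of $k$.
\item The parameter ranges in which those propositions hold must be implied by the hypotheses of Theorem~\ref{thm:scaled-murmuration}. This matters because $P$ is now only $\ll X^{(1+\delta_2)/k}$ rather than of size $X$. The cited results require $P$ at most a power of $X$, which is still satisfied, and their error terms are stated uniformly in $P$, so they remain valid at smaller $P$.
\end{itemize}
If the cited constants differ by the $h(-4D)$ versus $h(-D)$ contributions, I would rederive them via the class number formula $h(-D)=\sqrt D L(1,\chi_{-D})/\pi$ averaged over squarefree $N$.
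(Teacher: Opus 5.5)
Your proposal is correct and follows the paper's proof. In both, Lemma~\ref{lem:class-number-reduction} reduces $\mathcal P_k$ to $P^{m-1}$ times the Kundu--M\"uller average for $k=2m$, or $P^{m}$ times Zubrilina's average for $k=2m+1$; the lower geometric terms are absorbed into $(PXY)^\varepsilon P^{k/2-1}/\sqrt X$, and $h(-N)+h(-4N)$ is bounded trivially. The normalization point you flagged does arise: Kundu--M\"uller's Proposition~3.1 averages the full $\frac12H_1(-4P^2N)$, i.e.\ $A_N+B_N$ with $B_N=\frac12\bigl(h(-N)+h(-4N)\bigr)$, so the paper writes $S_m=P^{m-1}(A_N+B_N)+(\cdots)A_N+(1-P^{m-1})B_N$ and uses nonnegativity of $A_N$ and $B_N$; your trivial bound on $B_N$ already absorbs the resulting $O(P^{k/2-1}X^{-1/2+\varepsilon})$ discrepancy, so the argument goes through unchanged.
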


\begin{proof}
Suppose first that $k=2m$.
Lemma~\ref{lem:class-number-reduction} gives
\begin{multline*}
\frac12
\sum_{j=0}^{m}
\left(
h(-P^{2m-2j}N)+h(-4P^{2m-2j}N)
\right)
\\
=
\frac12
\sum_{\ell=1}^{m}P^{\ell-1}
\left(
h(-P^2N)+h(-4P^2N)
\right)
+
\frac12\left(h(-N)+h(-4N)\right).
\end{multline*}
Put
\[
A_N=\frac{h(-P^2N)+h(-4P^2N)}2,
\qquad
B_N=\frac{h(-N)+h(-4N)}2.
\]
If $S_m$ denotes the left-hand side of the preceding identity, then
\begin{equation}\label{eq:even-parabolic-algebra}
S_m
=P^{m-1}(A_N+B_N)
+\left(\sum_{\ell=1}^{m}P^{\ell-1}-P^{m-1}\right)A_N
+(1-P^{m-1})B_N.
\end{equation}
In the present notation, Proposition~3.1 of~\cite{KunduMuller2026} states that
\begin{multline}\label{eq:base-even-parabolic}
\frac{\zeta(2)\pi}{XY}
\sideset{}{'}\sum_{\substack{N\in[X,X+Y]\\P\nmid N}}
\frac{
h(-P^2N)+h(-4P^2N)+h(-N)+h(-4N)
}{2}
\\
=
\frac{AP}{\sqrt X}
+
O_\varepsilon\left(
(PXY)^\varepsilon
\left(
\frac1{\sqrt X}
+
\frac{P^{7/6}X^{1/12}}{Y^{5/6}}
+
\frac{PY}{X^{3/2}}
\right)
\right).
\end{multline}
Multiplication by $P^{m-1}$ gives $AP^m/\sqrt X$ and the three analytic errors carrying $(PXY)^\varepsilon$ in the statement.

It remains to bound the final two terms in~\eqref{eq:even-parabolic-algebra}.
For $m\geq2$,
\[
\sum_{\ell=1}^{m}P^{\ell-1}-P^{m-1}=O_k(P^{m-2});
\]
for $m=1$ the difference vanishes.
Both $A_N$ and $B_N$ are nonnegative.  Hence the normalized average of $A_N$ is at most the left-hand side of~\eqref{eq:base-even-parabolic}.  Multiplying the main term and the three errors in that formula by $O_k(P^{m-2})$ produces terms absorbed, respectively, by the three errors in the even assertion of the proposition.  For the remaining component, the standard class-number bound $h(-D)\ll_\varepsilon D^{1/2+\varepsilon}$ gives
\[
\frac{\zeta(2)\pi}{XY}
\sideset{}{'}\sum_{\substack{N\in[X,X+Y]\\P\nmid N}} B_N
\ll_\varepsilon X^{-1/2+\varepsilon}.
\]
Since $1-P^{m-1}=O_k(P^{m-1})$, its contribution is absorbed by
$(PXY)^\varepsilon P^{m-1}/\sqrt X$, after renaming $\varepsilon$.

If $k=2m+1$, the same argument gives
\[
\frac12
\sum_{j=0}^{m}
\left(
h(-P^{2m+1-2j}N)+h(-4P^{2m+1-2j}N)
\right)
\\
=
\frac12
\sum_{\ell=0}^{m}P^\ell
\left(
h(-PN)+h(-4PN)
\right).
\]
Proposition~3.1 of~\cite{Zubrilina2025} reads
\begin{multline}\label{eq:base-odd-parabolic}
\frac{\zeta(2)\pi}{XY}
\sideset{}{'}\sum_{\substack{N\in[X,X+Y]\\P\nmid N}}
\frac{h(-PN)+h(-4PN)}2
\\
=
\frac{A\sqrt P}{\sqrt X}
+
O_\varepsilon\left(
(PXY)^\varepsilon
\left(
\frac1{\sqrt{PX}}
+
\frac{P^{11/19}}{Y^{16/19}}
+
\frac{\sqrt P\,Y}{X^{3/2}}
\right)
\right).
\end{multline}
Multiplying this by
\[
\sum_{\ell=0}^mP^\ell=P^m+O_k(P^{m-1})
\]
multiplies each analytic error by $O_k(P^m)$ and gives the three errors in the odd assertion.  The lower geometric coefficients in the main term contribute
$O_k(P^{m-1/2}/\sqrt X)$, which is absorbed by the first of those errors.
Finally, the total contribution of the $O_k(1)$ term in~\eqref{eq:parabolic-decomposition} is $O_k(X^{-1})$.
\end{proof}

\subsection{The hyperbolic term}

The following elementary observation removes the extra $P$-adic cases which otherwise appear for higher powers.

\begin{lemma}\label{lem:p-coprime}
Assume the hypotheses of Theorem~\ref{thm:scaled-murmuration}.
If
\[
0<r\leq\frac{2P^{k/2}}{\sqrt X}
\]
and
\[
d^2\mid r^2N-4P^k,
\qquad
P\nmid N,
\]
then, for sufficiently large $X$,
\[
P\nmid rd.
\]
\end{lemma}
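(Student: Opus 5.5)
The plan is to prove the two divisibility claims separately: first $P\nmid r$ by a size comparison, then $P\nmid d$ by reducing it to the first claim through the congruence. Only the growth condition $P^k\ll X^{1+\delta_2}$ and the restriction $(k-1)\delta_2<1$ from Theorem~\ref{thm:scaled-murmuration} are used.

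\emph{Step 1: $P\nmid r$.} I will compare the size of $r$ with the size of $P$. The growth hypothesis gives
\[
0<r\leq\frac{2P^{k/2}}{\sqrt X}\ll X^{\delta_2/2}.
\]
The range of $r$ is nonempty, so $1\leq 2P^{k/2}/\sqrt X$. Hence $P^k\geq X/4$, that is, $P\gg X^{1/k}$.

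The key numerical point is that $(k-1)\delta_2<1$ forces $\delta_2/2<1/k$. Indeed, $\delta_2<1/(k-1)$, and $1/(2(k-1))\leq 1/k$ holds exactly when $k\geq2$. Therefore
\[
r\ll X^{\delta_2/2}=o\bigl(X^{1/k}\bigr),
\]
so $0<r<P$ for all sufficiently large $X$. Since $r$ is a positive integer smaller than $P$, it follows that $P\nmid r$.

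\emph{Step 2: $P\nmid d$.} Suppose instead that $P\mid d$. Then $P^2\mid d^2\mid r^2N-4P^k$. Since $k\geq2$, we also have $P\mid 4P^k$. Subtracting gives $P\mid r^2N$.

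Now $P$ is prime and $P\nmid N$, so $P\mid r^2$ and hence $P\mid r$. This contradicts Step~1. Combining the two steps gives $P\nmid rd$ for all sufficiently large $X$.

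The only delicate point is the exponent comparison in Step~1. This is exactly where the hypothesis $(k-1)\delta_2<1$ enters, and everything else is elementary. I would also check that the implied constants depend only on $k$ and the constant in $P^k\ll X^{1+\delta_2}$, so that "sufficiently large $X$" is uniform in $N$ and $r$.
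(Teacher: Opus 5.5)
Your proof is correct and follows the paper's argument: a size comparison shows $r<P$, and then reducing $d^2\mid r^2N-4P^k$ modulo $P$ rules out $P\mid d$. The only cosmetic difference is in the size comparison: the paper bounds $r/P\leq 2P^{(k-2)/2}/\sqrt X$ directly using only the upper bound on $P^k$, whereas you combine an upper bound for $r$ with the lower bound $P^k\geq X/4$, which comes from the $r$-range being nonempty.
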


\begin{proof}
We have
\[
\frac rP
\leq
\frac{2P^{(k-2)/2}}{\sqrt X}
\ll
X^{\frac{-2+(k-2)\delta_2}{2k}}.
\]
Since $(k-1)\delta_2<1$, the exponent is negative.
Hence $r<P$ for sufficiently large $X$, and therefore $P\nmid r$.
If $P\mid d$, then reduction of
\[
d^2\mid r^2N-4P^k
\]
modulo $P$ gives $P\mid r^2N$, which is impossible.
\end{proof}

For the rest of this subsection, put
\[
Q=P^k.
\]
For positive integers $r,d$, put
\[
\mathcal A_{r,d}(Q)
=
\left\{
N\in\mathbb Z_{>0}:
\begin{array}{c}
\mu^2(N)=1,\ P\nmid N,\ d^2\mid r^2N-4Q,\\[1mm]
\displaystyle
\frac{N(r^2N-4Q)}{d^2}\equiv0\ \text{or}\ 1\pmod4
\end{array}
\right\}.
\]
The point of the next lemma is that the last two conditions, although they involve a quotient, are described by fixed residue classes modulo $d^2$.

\begin{lemma}\label{lem:admissible-classes}
Suppose that $P\nmid rd$.  There is a set
$\mathcal R_{r,d}(Q)\subset\mathbb Z/d^2\mathbb Z$ such that
\[
\mathcal A_{r,d}(Q)
=
\left\{
N\in\mathbb Z_{>0}:
\mu^2(N)=1,\ P\nmid N,\
N\bmod d^2\in\mathcal R_{r,d}(Q)
\right\}.
\]
Every class in $\mathcal R_{r,d}(Q)$ is coprime to $d$, and
\[
\#\mathcal R_{r,d}(Q)
=
\begin{cases}
1,&(r,d)=1,\quad 2\nmid rd,\\
1,&(r,d)=1,\quad 2\mid r,\\
1,&(r,d)=2,\quad 2\Vert d,\\
2,&(r,d)=2,\quad 4\mid d,\\
0,&\text{otherwise}.
\end{cases}
\]
If $d$ is even, every integer represented by an admissible class is odd.
Writing
\[
s=\frac{N(r^2N-4Q)}{d^2},
\]
the parity information is
\[
\begin{array}{c|c}
\text{case}&\text{parity of }s\\ \hline
2\Vert d,\ 2\Vert r&\text{even}\\
2\Vert d,\ 4\mid r&\text{odd}\\
4\mid d&\text{one even class and one odd class}.
\end{array}
\]
In particular, the number and parity type of these classes are independent of $k$.
\end{lemma}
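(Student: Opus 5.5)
The plan is to use the Chinese remainder theorem: factor $d=2^a d'$ with $d'$ odd and $P\nmid d$. The divisibility condition $d^2\mid r^2N-4Q$ and the congruence condition on $s$ are then analysed separately modulo $\ell^{2v_\ell(d)}$ for each prime $\ell\mid d$. In each local analysis I will check that $N$ enters only through its residue modulo $d^2$. The set $\mathcal R_{r,d}(Q)$ is then the product of the local solution sets, and its cardinality is the product of the local counts. Throughout, $Q=P^k$ is odd and prime to $d$, and $N$ squarefree means $4\nmid N$.

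\emph{Odd primes.} Let $\ell\mid d'$ with $\ell^{2b}\Vert d^2$.
\begin{itemize}
\item If $\ell\mid r$, then $\ell\mid 4Q$, which is impossible because $\ell\ne P$. So $\ell\nmid r$.
\item The condition $\ell^{2b}\mid r^2N-4Q$ then says exactly $N\equiv 4Qr^{-2}\pmod{\ell^{2b}}$. This is a single class, and it is a unit because $\ell\nmid 4Q$.
\item If $d$ is odd, the quartic condition is automatic. Indeed $d^2\equiv1\pmod 8$, so $s\equiv N(r^2N-4Q)\pmod 4$. This is $\equiv N^2r^2\in\{0,1\}$ when $r$ is odd, and $\equiv 0$ when $r$ is even.
\end{itemize}
This gives the two cases with $(r,d)=1$ and $d$ odd, each with one class.

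\emph{The prime $2$ when $d$ is even.} We need $4\mid r^2N$. Since $4\nmid N$, this forces $r$ to be even; write $r=2r'$. The condition becomes $2^{2a-2}\mid r'^2N-Q$, and then $s\equiv N\,(r'^2N-Q)/2^{2a-2}\pmod 4$, using $d'^2\equiv1\pmod 8$. I will treat $a=1$ and $a\ge2$ separately.

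For $a=1$ there is no divisibility constraint, and I work with $N$ modulo $4$.
\begin{itemize}
\item $N$ even: then $N\equiv2\pmod 4$ while $r'^2N-Q$ is odd, so $s\equiv2\pmod 4$, which is excluded. Hence $N$ is odd.
\item $r'$ odd: $s\equiv N(N-Q)\pmod 4$ is even, and $s\equiv 0\pmod 4$ exactly when $N\equiv Q\pmod 4$. This is one class, with $s$ even; it is the case $2\Vert d$, $2\Vert r$.
\item $r'$ even: $s\equiv -QN\pmod 4$ is odd, and $s\equiv1$ exactly when $N\equiv -Q\pmod 4$. This is one class, with $s$ odd; it is the case $2\Vert d$, $4\mid r$.
\end{itemize}

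For $a\ge2$, I first rule out $r'$ even, since then $2\mid Q$. So $r'$ is odd and $(r,d)=2$. The divisibility fixes $N\equiv Qr'^{-2}\pmod{2^{2a-2}}$, so $N$ is odd and its class modulo $4$ is fixed. There remain four lifts modulo $2^{2a}$. Moving between them changes $N$ by $2^{2a-2}$, which shifts the quotient $q=(r'^2N-Q)/2^{2a-2}$ by the odd number $r'^2$. So $q$ runs over all residues modulo $4$. The condition $Nq\equiv 0$ or $1\pmod 4$ selects $q\equiv0$ or $q\equiv N^{-1}\pmod 4$. These are exactly two classes, one with $s$ even and one with $s$ odd.

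Combining the local results by the Chinese remainder theorem gives the table, with every other configuration of $(r,d)$ giving zero classes, and shows that every class is coprime to $d$ and represents only odd $N$ when $d$ is even. The only dependence on $k$ is through $Q\bmod d^2$, which affects which classes occur but not how many or of what parity type. The step I expect to need the most care is the $a\ge2$ lifting count: I must verify that the quartic condition depends only on $N\bmod 2^{2a}$, and not on a finer modulus. This holds because $q\bmod 4$ is determined by $r'^2N-Q$ modulo $2^{2a}$.
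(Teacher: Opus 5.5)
Your proof is correct, but it organizes the computation differently from the paper.

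The paper works globally modulo $d^2$. After writing $r=2\ell$ and $d=2b$, it turns the condition $s\equiv0$ or $1\pmod 4$ into the two linear congruences $\ell^2N\equiv Q$ and $(\ell^2-b^2)N\equiv Q\pmod{4b^2}$. It then counts solutions by checking which coefficient is invertible modulo $4b^2$:
\begin{itemize}
\item for $2\Vert d$, only one coefficient can be invertible, depending on the parity of $\ell$;
\item for $4\mid d$, both are invertible, and the two solutions are distinct because $b^2N\not\equiv0\pmod{4b^2}$ for odd $N$;
\item every vanishing case is dismissed by a single observation: a common divisor of the coefficient and $d$ would have to divide $Q$.
\end{itemize}

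You instead split $d^2$ by the Chinese remainder theorem and use $d'^2\equiv1\pmod 8$ to see that the quartic condition is purely $2$-adic. You then replace the invertibility check by an explicit count: one class modulo $4$ when $a=1$, and two of the four lifts from $2^{2a-2}$ to $2^{2a}$ when $a\geq2$.

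The paper's route is more compact and hands you the admissible classes explicitly as solutions of two congruences modulo $d^2$. Your route makes the dependence on $N\bmod d^2$ visibly local. It also spells out two points the paper states without computation:
\begin{itemize}
\item the discriminant condition is automatic for odd $d$ when $r$ is even;
\item $N\equiv2\pmod4$ is excluded when $2\Vert d$, because then $s\equiv2\pmod 4$.
\end{itemize}
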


\begin{proof}
If $r$ is odd, squarefreeness forces $d$ to be odd, and the discriminant condition is then automatic.
The congruence has the unique solution
\[
a\equiv4Qr^{-2}\pmod {d^2}
\]
when $(r,d)=1$, and has no solution otherwise.

Now write $r=2\ell$.  If $d$ is odd, division by $4$ gives
\[
\ell^2a\equiv Q\pmod {d^2},
\]
so again there is one class precisely when $(\ell,d)=1$.
If $d=2b$, the discriminant condition forces $a$ to be odd.
Reducing the quotient modulo $4$ gives the two possible congruences
\[
\ell^2a\equiv Q\pmod {4b^2},
\qquad
(\ell^2-b^2)a\equiv Q\pmod {4b^2}.
\]
A direct parity check is as follows.  When $2\Vert d$, the integer $b$ is odd.
If $\ell$ is odd, only $\ell^2$ is invertible modulo $4b^2$, and if $\ell$ is even, only $\ell^2-b^2$ is
invertible.
Thus there is one class.
When $4\mid d$, the integer
$b$ is even and $(r,d)=2$ forces $\ell$ to be odd.
Both coefficients are then invertible, and the two solutions are distinct because their equality would imply $b^2a\equiv0\pmod{4b^2}$, which is impossible for odd $a$.
In every other case a common divisor of the coefficient and $d$ would divide $Q$, contradicting
$P\nmid d$.
This also proves that every resulting class is coprime to $d$.

For the parity assertion, the first of the two displayed congruences gives
\[
\frac{N(\ell^2N-Q)}{b^2}\equiv0\pmod4,
\]
The second gives
\[
\frac{N(\ell^2N-Q)}{b^2}\equiv N^2\equiv1\pmod4.
\]
When $2\Vert d$, the first congruence occurs precisely for $\ell$ odd and the second for $\ell$ even.
When $4\mid d$, both occur.
This proves both the stated equivalence defining $\mathcal R_{r,d}(Q)$ and its parity description.
The argument uses only that $Q$ is odd and
$(Q,rd)=1$, so it is independent of $k$.
\end{proof}

We call $(r,d)$ admissible if $P\nmid rd$ and
$\mathcal R_{r,d}(Q)$ is nonempty.

We shall use Hooley's estimate for squarefree integers in arithmetic progressions in a form that retains the condition $P\nmid N$.
Set
\[
\kappa(m)=\prod_{q\mid m}\frac{q}{q+1},
\qquad
\rho_P(m)=
\begin{cases}
1,&P\mid m,\\[2mm]
\dfrac{P}{P+1},&P\nmid m.
\end{cases}
\]

\begin{lemma}\label{lem:prime-deleted-hooley}
If $(a,m)=1$, then
\[
\sum_{\substack{N\in[X,X+Y]\\N\equiv a\;(\bmod m)\\P\nmid N}}
\mu^2(N)
=
\frac{Y}{\zeta(2)}
\frac{\kappa(m)}{\varphi(m)}\rho_P(m)
+
O_\varepsilon\left(
X^\varepsilon
\left(
\sqrt{\frac Xm}+m^{1/2+\varepsilon}
\right)
\right).
\]
\end{lemma}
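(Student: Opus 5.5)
The plan is to reduce the statement to the classical squarefree-in-progressions estimate by removing the condition $P\nmid N$ through inclusion--exclusion over the single prime $P$. Write
\[
\sum_{\substack{N\in[X,X+Y]\\N\equiv a\ (\bmod m)\\P\nmid N}}\mu^2(N)
=
\sum_{\substack{N\in[X,X+Y]\\N\equiv a\ (\bmod m)}}\mu^2(N)
-\sum_{\substack{N\in[X,X+Y]\\N\equiv a\ (\bmod m)\\P\mid N}}\mu^2(N).
\]
For the first sum we invoke Hooley's estimate in the standard form: for $(a,m)=1$,
\[
\sum_{\substack{N\leq Z\\N\equiv a\ (\bmod m)}}\mu^2(N)=\frac{Z}{\zeta(2)}\frac{\kappa(m)}{\varphi(m)}+O_\varepsilon\bigl(Z^{\varepsilon}(\sqrt{Z/m}+m^{1/2+\varepsilon})\bigr).
\]
This follows from $\mu^2(N)=\sum_{e^2\mid N}\mu(e)$. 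One splits at $e\leq\sqrt{Z/m}$, applying the trivial count $Z/(me^2)+O(1)$ to each small $e$ coprime to $m$, and bounds the tail $e>\sqrt{Z/m}$ by $Z/(me)$ summed dyadically. Differencing at $Z=X+Y$ and $Z=X$ gives the main term $Y\kappa(m)/(\zeta(2)\varphi(m))$ with the stated error.

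Next treat the second sum in two cases. If $P\mid m$, then $(a,m)=1$ forces $P\nmid N$, so the subtracted sum is empty. The answer is then Hooley's sum itself, consistent with $\rho_P(m)=1$. If $P\nmid m$, write $N=PN'$ with $P\nmid N'$ and $N'$ squarefree, so that $N'\equiv \bar P a\pmod m$ and $N'\in[X/P,(X+Y)/P]$. This count again carries the condition $P\nmid N'$. Rather than recursing, I would instead write the condition directly as a congruence: $P\nmid N$ with $N\equiv a\pmod m$ is the union of the $P-1$ classes modulo $mP$ lifting $a$ and coprime to $P$. Applying Hooley's estimate with modulus $mP$ gives
\[
(P-1)\frac{Y}{\zeta(2)}\frac{\kappa(mP)}{\varphi(mP)}
=\frac{Y}{\zeta(2)}\frac{\kappa(m)}{\varphi(m)}\cdot\frac{P}{P+1},
\]
which is exactly $\rho_P(m)$. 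The cost is that the error is multiplied by $P$, which is too lossy. So the better route is the inclusion--exclusion identity
\[
\#\{P\nmid N\}=\sum_{j\geq0}(-1)^j\,\#\{N'\ \text{squarefree},\ N'\equiv \bar P^{\,j}a \ (\bmod m),\ N'\in[X/P^j,(X+Y)/P^j]\}.
\]
This holds because squarefree $N=PN'$ with $P\nmid N'$ equals all squarefree $N'$ minus those divisible by $P$, iterated. Each term is handled by Hooley with modulus $m$ on an interval of length $Y/P^j$. The main terms sum to
\[
\frac{Y}{\zeta(2)}\frac{\kappa(m)}{\varphi(m)}\sum_j(-1)^jP^{-j}=\frac{Y}{\zeta(2)}\frac{\kappa(m)}{\varphi(m)}\frac{P}{P+1}.
\]
The errors are $X^\varepsilon(\sqrt{X/(mP^j)}+m^{1/2+\varepsilon})$. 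The sum terminates once $P^j>X+Y$, so there are $O(\log X)$ terms, absorbed into $X^\varepsilon$.

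The main obstacle is this error bookkeeping. One must ensure the $m^{1/2+\varepsilon}$ term, repeated $O(\log X)$ times, and the geometric decay of $\sqrt{X/(mP^j)}$ both stay within the stated bound uniformly in $P$, and that Hooley's bound is valid even when the interval length $Y/P^j$ is small. In that regime the trivial bound $Y/(mP^j)+1$ suffices. Uniformity in $m$ of Hooley's error is taken from the standard statement.
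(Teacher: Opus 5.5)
Your final route is the paper's proof: the identity $\mu^2(N)\mathbf 1_{P\nmid N}=\sum_{j\geq0}(-1)^j\mathbf 1_{P^j\mid N}\,\mu^2(N/P^j)$, Hooley's estimate at modulus $m$ for the class $aP^{-j}$ on intervals of length $Y/P^j$, the geometric series $\sum_j(-1)^jP^{-j}=P/(P+1)$, geometric decay of the $\sqrt{X/(mP^j)}$ errors, and only $O(\log X)$ copies of $m^{1/2+\varepsilon}$, with $P\mid m$ handled separately. One caution: your aside deriving Hooley's bound from $\mu^2(N)=\sum_{e^2\mid N}\mu(e)$ with trivial counts does not work, because the large-$e$ range costs about $\sqrt Z$ (or $m^{1+\varepsilon}$ if you count by the cofactor) rather than $\sqrt{Z/m}+m^{1/2+\varepsilon}$, so Hooley's estimate should simply be cited, as the paper does.
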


\begin{proof}
When $P\mid m$, the congruence itself implies $P\nmid N$, and this is Hooley's estimate~\cite{Hooley1975}.  Suppose that $P\nmid m$.
The identity
\[
\mu^2(N)\mathbf 1_{P\nmid N}
=
\sum_{j\geq0}(-1)^j\mathbf 1_{P^j\mid N}\,
\mu^2\left(\frac{N}{P^j}\right)
\]
reduces the left-hand side to a finite alternating sum of unrestricted squarefree progression sums, in intervals of length $Y/P^j$ and with residue class $aP^{-j}\bmod m$.
The main terms form the geometric series
\[
\sum_{j\geq0}\frac{(-1)^j}{P^j}=\frac{P}{P+1}.
\]
Replacing the finite series by this infinite series costs $O(1)$, which is absorbed by the displayed error.
The square-root errors form a convergent geometric series, while the remaining $O(m^{1/2+\varepsilon})$-errors acquire at most a logarithmic factor, which is absorbed by $X^\varepsilon$.
\end{proof}

Put
\[
BC(r)=B\,C(r).
\]
For
\[
1\leq r<\frac{2P^{k/2}}{\sqrt{X+Y}},
\]
define
\[
\mathcal H_k(r)
=
\sideset{}{'}\sum_{\substack{N\in[X,X+Y]\\P\nmid N}}
H_1(r^2N^2-4P^kN).
\]

For an admissible pair $(r,d)$, define
\[
\mathcal S_{d,n,r}
=
\sum_{\substack{N\in[X,X+Y]\\
N\bmod d^2\in\mathcal R_{r,d}(Q)\\P\nmid N}}
\mu^2(N)
\left(\frac{N}{n}\right)
\left(\frac{(r^2N-4Q)/d^2}{n}\right),
\]
and set $\mathcal S_{d,n,r}=0$ for non-admissible pairs.
Here and below all symbols in parentheses are Kronecker symbols.

Let
\[
f_n=
\begin{cases}
1,&n\ {\rm odd},\\
4,&n\ {\rm even},
\end{cases}
\qquad
\widehat n=f_nn,
\qquad
\widehat g=(d^\infty,\widehat n),
\qquad
\widehat n_0=\frac{\widehat n}{\widehat g}.
\]
Here
\[
(d^\infty,m)=\prod_{q\mid d}q^{v_q(m)}
\]
denotes the largest divisor of $m$ supported on the primes dividing $d$.
Thus neither $\widehat g$ nor $\widehat n_0$ has $2$-adic valuation $1$ or $2$.
For $m$ with this restriction, introduce the local character sum
\[
\theta_{r,Q}(m)
=
\sum_{a\bmod m}
\left(\frac{a}{m}\right)
\left(\frac{r^2a-4Q}{m}\right)
\]
and, for $g\mid d^\infty$ with the same restriction, put
\[
\varphi^{\,o}_{r,d,Q}(g)
=
\sum_{\substack{a\bmod d^2g\\
a\bmod d^2\in\mathcal R_{r,d}(Q)}}
\left(\frac{a}{g}\right)
\left(\frac{(r^2a-4Q)/d^2}{g}\right).
\]
Finally, put
\[
\Gamma_{r,Q}(d,n)
=
\rho_P(f_nd^2n)
\frac{\kappa(f_nd^2n)}{\varphi(f_nd^2n)}
\sum_{\substack{a\bmod f_nd^2n\\
a\bmod d^2\in\mathcal R_{r,d}(Q)}}
\left(\frac a n\right)
\left(\frac{(r^2a-4Q)/d^2}{n}\right).
\]

\begin{lemma}\label{lem:local-arithmetic}
Let $D,M\geq1$ and $1\leq r<2\sqrt Q/\sqrt{X+Y}$ with $P\nmid r$.
The series
\[
\sum_{\substack{d,n\geq1\\ (r,d)\ {\rm admissible}}}
\frac{\Gamma_{r,Q}(d,n)}{nd}
\]
is absolutely convergent, and
\begin{equation}\label{eq:local-series}
\sum_{\substack{d\leq D,\ n\leq M\\ (r,d)\ {\rm admissible}}}
\frac{\Gamma_{r,Q}(d,n)}{nd}
=BC(r)+O(P^{-2})
+O_\varepsilon\left(
(QDM)^\varepsilon\left(D^{-2}+M^{-1/5}\right)
\right).
\end{equation}
The constants and tail bounds are uniform in the displayed range of $r$; $k$ remains fixed.
\end{lemma}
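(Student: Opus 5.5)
The plan is to factor $\Gamma_{r,Q}(d,n)$ into local factors and compare the resulting Euler product with the one in Zubrilina's prime case and Kundu--M\"uller's square case.

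\emph{Multiplicativity.} By Lemma~\ref{lem:admissible-classes}, the residue classes $\mathcal R_{r,d}(Q)$ are described prime by prime, and their count depends only on the $2$-adic and $(r,d)$ data. The Chinese remainder theorem then splits the sum over $a\bmod f_nd^2n$ into a product over primes $q$. The factors are:
\begin{itemize}
\item the local character sums $\theta_{r,Q}(q^{v})$ for $q\nmid d$;
\item the sums $\varphi^{\,o}_{r,d,Q}$ at primes $q\mid d$;
\item the multiplicative weights $\kappa/\varphi$ and $\rho_P$.
\end{itemize}
Hence $\Gamma_{r,Q}(d,n)/(nd)$ is a multiplicative function of the pair $(d,n)$, and the full double series formally becomes an Euler product $\prod_q E_q(r,Q)$.

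\emph{Evaluating the local factors.} For $q\nmid 2rQ$, completing the square in $\theta_{r,Q}(q^v)$ gives a Jacobsthal-type sum, since $\left(\frac{a(r^2a-4Q)}{q}\right)$ is a quadratic polynomial character. This sum equals $-1$ for $v$ odd and is of size $\varphi(q^v)$-type for $v$ even, with no dependence on $Q$ beyond $q\nmid Q$. Likewise, the count of roots of $r^2a\equiv4Q \pmod{q^{2j}}$ is $1$ for $q\nmid r$. The factors $E_q$ for $q\nmid rP$ therefore agree with those in the prime case ($Q=P$) of Zubrilina and the square case of Kundu--M\"uller, so their product gives $B$. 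For $q\mid r$, admissibility forces $q\nmid d$ and the character sum degenerates to $\left(\frac{-4Q}{q}\right)$-independent terms; this reproduces the factor $1+p^2/(p^4-2p^2-p+1)$ in $C(r)$. For $q=2$, the parity table in Lemma~\ref{lem:admissible-classes} is independent of $k$, so the local computation is identical to Zubrilina's.

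\emph{The prime $P$.} At $q=P$, Lemma~\ref{lem:p-coprime} gives $P\nmid d$. The only $P$-dependence then sits in $\rho_P$ and in $n$ divisible by $P$, where $\left(\frac{r^2a-4Q}{P}\right)=\left(\frac{a}{P}\right)^2$ is essentially constant. The local factor at $P$ is therefore $1+O(P^{-2})$ relative to the generic factor, which yields the $O(P^{-2})$ term.

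\emph{Convergence and truncation.} Absolute convergence follows from the local bounds: for squarefree-type $n$ coprime to $d$, $\Gamma\ll 1/\varphi(n)\cdot O(1)^{\omega(n)}$, and summing the leftover $n^{-1}d^{-1}$ weights with $d^{-2}$ decay converges. For the truncation, the tail $d>D$ contributes $\ll (QD)^\varepsilon D^{-2}$ by the trivial bound on the admissible class count, since it counts as $d^{-3}$ times a divisor function. For the tail $n>M$, the sum in $n$ is a twisted character sum. We treat it by writing it as an $L$-series in $n$ of a product of characters and applying partial summation with a P\'olya--Vinogradov or Burgess bound, uniformly in $r$ and $d$. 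This step is the main obstacle. I expect the exponent $M^{-1/5}$ to arise from balancing the Burgess-type bound against the size of the modulus, exactly as in Zubrilina's Lemma. The plan is to reuse her argument verbatim after checking that the local moduli have the same shape when $Q=P^k$, with the $Q^\varepsilon$ loss absorbing the divisor-bound dependence on $Q$.
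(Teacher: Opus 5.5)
Your skeleton matches the paper's proof. You split the complete residue sum by CRT into $\varphi^{\,o}_{r,d,Q}(\widehat g)\,\theta_{r,Q}(\widehat n_0)$. You observe that away from $P$ the local sums use only that $Q$ is odd and prime to $rd$, so Zubrilina's factors, with product $BC(r)/B_P$, are unchanged. You treat $P$ separately via Lemma~\ref{lem:p-coprime}.

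Two local values need correcting:
\begin{itemize}
\item For $q\nmid 2rQ$ and odd $\alpha$, one has $\theta_{r,Q}(q^\alpha)=-q^{\alpha-1}$, not $-1$.
\item At $P$, it is the product of the two symbols that equals $\left(\frac aP\right)^{2\alpha}=1$; the second symbol alone is $\left(\frac aP\right)^{\alpha}$. Hence $\theta_{r,Q}(P^\alpha)=\varphi(P^\alpha)$, the $P$-factor is exactly $P^2/(P^2-1)$, and it differs from $B_P$ by $(P^2+P-1)/(P^2-1)^2=O(P^{-2})$.
\end{itemize}

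\textbf{The gap.} The genuine gap is in the convergence and tail estimates, which you single out as the main obstacle but propose to handle with a tool that does not apply.
\begin{itemize}
\item For fixed $d$, the map $n\mapsto\Gamma_{r,Q}(d,n)$ is not an incomplete character sum. It is a multiplicative function built from complete local sums that you have already evaluated exactly. So there is nothing for P\'olya--Vinogradov or Burgess to act on, and $M^{-1/5}$ does not come from balancing such a bound. P\'olya--Vinogradov enters the paper only when truncating $L(1,\chi)$ in Proposition~\ref{prop:hyperbolic-one-r}, and Lemma~\ref{lem:prime-deleted-character-sum} bounds the sums $\mathcal S_{d,n,r}$ over $N$, not this local series.
\item Your convergence sketch fails exactly where it matters. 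The bound $\Gamma\ll O(1)^{\omega(n)}/\varphi(n)$ holds for squarefree $n$. But for $n=m^2$ with $(m,2rdP)=1$ one has $\theta_{r,Q}(m^2)=m^2\prod_{q\mid m}(1-2/q)$. So $\Gamma_{r,Q}(d,m^2)$ has size $d^{-2}$ up to factors $(dm)^{\pm\varepsilon}$ and is not small; these terms are summable only through the weight $1/n=1/m^2$.
\item The trivial class-count bound $|\Gamma_{r,Q}(d,n)|/(nd)\ll 1/(nd^3)$ gives a divergent $n$-sum. So the $d$-tail cannot be bounded until the $n$-sum is controlled uniformly in $d$.
\end{itemize}

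\textbf{The missing idea.} What is needed is an Euler-product majorant combined with Rankin's trick, $\sum_{n>M}|\Gamma|/n\le M^{-s}\sum_n|\Gamma|\,n^{s-1}$.
\begin{itemize}
\item From the exact local values, odd exponents contribute $\asymp q^{-\alpha-1}$ and even ones $\asymp q^{-\alpha}$. Hence the Euler factor of $\sum_n|\Gamma_{r,Q}(d,n)|\,n^{s-1}$ at $q\nmid 2rdP$ is $1+O(q^{s-2}+q^{2s-2})$.
\item The factors at $q\mid 2rdP$ cost at most $(QD)^{\varepsilon}$.
\item The product converges for $s<1/2$, with the square part as the bottleneck. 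This gives an $n$-tail $\ll (QM)^{\varepsilon}M^{-s}$, in particular $M^{-1/5}$.
\item On the $d$-side, $\varphi^{\,o}_{r,d,Q}$ is supported on squares $g\mid d^\infty$, and $\kappa/\varphi$ contributes $d^{-2}$. So each $d$ contributes $\ll d^{-3+\varepsilon}$ after summing over $n$, and the $d$-tail is $\ll D^{-2+\varepsilon}$.
\end{itemize}
This is exactly what the paper imports from Zubrilina's Lemma~3.10 for the factors away from $P$, bounding the isolated $P$-factor separately. Your fallback of reusing her lemma verbatim therefore coincides with the paper's proof, but the mechanism you attribute to that lemma would not yield the bound.
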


\begin{proof}
We give the local comparison, since this is where replacing $P$ or $P^2$ by $P^k$ has to be justified.
Applying the Chinese remainder theorem to the safe modulus $\widehat n=f_nn$ gives
\begin{equation}\label{eq:residue-factorization}
\sum_{\substack{a\bmod f_nd^2n\\
a\bmod d^2\in\mathcal R_{r,d}(Q)}}
\left(\frac a n\right)
\left(\frac{(r^2a-4Q)/d^2}{n}\right)
=
\varphi^{\,o}_{r,d,Q}(\widehat g)
\theta_{r,Q}(\widehat n_0).
\end{equation}
Multiplying an even $n$ by $4$ preserves the product of the two Kronecker symbols on the nonzero residue classes and supplies the correct $2$-adic conductor.
More explicitly, if $g=(d^\infty,n)$ and $n_0=n/g$, then for odd $n$ one has $(\widehat g,\widehat n_0)=(g,n_0)$.
If $n$ and $d$ are even, then
$(\widehat g,\widehat n_0)=(4g,n_0)$; if $n$ is even and $d$ is odd, then $(\widehat g,\widehat n_0)=(g,4n_0)$.
These are the three CRT factorizations represented at once by~\eqref{eq:residue-factorization}; the factor $d^{-2}$ is a square modulo $\widehat n_0$ and leaves the second character in $\theta_{r,Q}$ unchanged.

For an odd prime $q\neq P$, direct reduction modulo $q$ gives
\[
\theta_{r,Q}(q^\alpha)
=
\begin{cases}
-q^{\alpha-1},&q\nmid r,\quad \alpha\ {\rm odd},\\
q^{\alpha-1}(q-2),&q\nmid r,\quad \alpha\ {\rm even},\\
0,&q\mid r,\quad \alpha\ {\rm odd},\\
q^{\alpha-1}(q-1),&q\mid r,\quad \alpha\ {\rm even}.
\end{cases}
\]
Indeed, when $q\nmid r$, the substitution $a=4Qr^{-2}u$ reduces the product of the two characters to the one attached to $u(u-1)$, up to a square.
When $q\mid r$, the odd powers have zero mean and the even powers are principal.
At $q=2$ and $\alpha\geq3$,
\[
\theta_{r,Q}(2^\alpha)
=
\begin{cases}
(-1)^\alpha2^{\alpha-1},&r\ {\rm odd},\\
0,&r\ {\rm even},
\end{cases}
\]
because $Q$ is odd and $4Qr^{-2}\equiv4\pmod8$ when $r$ is odd.
The exponents $v_2(n)=1,2$ are incorporated by the factor $f_n$ in~\eqref{eq:residue-factorization}.
The admissible classes in Lemma~\ref{lem:admissible-classes} also give
\[
\varphi^{\,o}_{r,d,Q}(g)
=
\begin{cases}
\varphi(g)\mathbf1_{g=\square},&2\nmid d,\\
\varphi(g)\mathbf1_{g=\square},&2\Vert d,\quad 2\nmid g,\\
0,&2\Vert d,\quad 2\mid g,\quad 2\Vert r,\\
2\varphi(g)\mathbf1_{g=\square},
 &2\Vert d,\quad 2\mid g,\quad 4\mid r,\\
2\varphi(g)\mathbf1_{g=\square},&4\mid d.
\end{cases}
\]
These are exactly the local sums evaluated in
\cite[Lemmas~3.8 and~3.9]{Zubrilina2025}, and the displayed computations show that their use there requires only that the parameter in place of $P$ be odd and coprime to $rd$.

It remains to inspect the prime $P$.  Lemma~\ref{lem:p-coprime} implies $P\nmid d r$, and hence
\[
\theta_{r,Q}(P^\alpha)=\varphi(P^\alpha)
\qquad(\alpha\geq1).
\]
The $P$-part of the series is therefore absolutely convergent and its local factor, including $\rho_P(f_nd^2n)$, is
\[
\frac{P}{P+1}
+
\sum_{\alpha\geq1}
\frac{P}{P+1}\frac1{P^\alpha}
=
\frac{P^2}{P^2-1}.
\]
For exponent $0$, the factor $P/(P+1)$ comes from $\rho_P$; for exponent $\alpha\geq1$, one has $\rho_P=1$, and the same factor comes from $\kappa(P^\alpha)$.
Since $P\nmid r$, the corresponding generic local factor is
\[
B_P=\frac{P^4-2P^2-P+1}{(P^2-1)^2},
\]
and
\[
\frac{P^2}{P^2-1}-B_P
=\frac{P^2+P-1}{(P^2-1)^2}
=O(P^{-2}).
\]
At every other prime the Euler factors are those computed in~\cite[Lemmas~3.8--3.10]{Zubrilina2025}.
Since $P\nmid r$, their product is
\[
\frac{BC(r)}{B_P}.
\]
The full product is therefore
\[
BC(r)\frac{P^2/(P^2-1)}{B_P}=BC(r)+O(P^{-2}).
\]
The product away from $P$ converges absolutely and uniformly.
The product defining $C(r)$ is uniformly bounded over $r$, so the error is uniform in the displayed range.
For the tails, write $\widehat n=mg$, where
\[
g=(d^\infty,\widehat n),
\qquad (m,d)=1,
\qquad g\mid d^\infty,
\qquad mg=\widehat n\leq4M.
\]
The Euler-factor majorant in~\cite[Lemma~3.10]{Zubrilina2025} applies to the factors away from $P$ and gives the uniform tails
$O_\varepsilon((QDM)^\varepsilon(D^{-2}+M^{-1/5}))$.
The isolated $P$-factor is uniformly bounded and has already been accounted for by the $O(P^{-2})$ term.
If $D=1$ or $M=1$, the estimate follows after enlarging the absolute constant because $D^{-2}+M^{-1/5}\geq1$.
This proves~\eqref{eq:local-series} in the stated range.
\end{proof}

\begin{lemma}\label{lem:small-n}
For $0\leq\sigma,\tau\leq1$,
\begin{equation*}
\begin{aligned}
\sum_{\substack{n\leq Y^\sigma\\d\leq Y^\tau}}
\frac{\mathcal S_{d,n,r}}{nd}
&=
\frac{Y}{\zeta(2)}BC(r)
+O\left(\frac{Y}{P^2}\right)\\
&\quad+
O_\varepsilon\left(
(QXY)^\varepsilon
\left(
\sqrt X\,Y^{\sigma/2}
+Y^{\tau+3\sigma/2}
+Y^{1-2\tau}
+Y^{1-\sigma/5}
\right)
\right).
\end{aligned}
\end{equation*}
\end{lemma}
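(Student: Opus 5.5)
The plan follows Zubrilina's treatment of the small-$n$ range, with the roles of $P$ and $P^2$ taken over by $Q=P^k$. For each admissible pair $(r,d)$ with $d\leq Y^\tau$ and $n\leq Y^\sigma$, split the sum $\mathcal S_{d,n,r}$ into residue classes modulo $f_nd^2n$. On a residue class $a\bmod f_nd^2n$ whose reduction mod $d^2$ lies in $\mathcal R_{r,d}(Q)$, the summand $\bigl(\frac{N}{n}\bigr)\bigl(\frac{(r^2N-4Q)/d^2}{n}\bigr)$ is constant. This holds because $(r^2N-4Q)/d^2$ is determined modulo $f_nn$ by $N$ modulo $f_nd^2n$, and the factor $f_n$ gives the correct $2$-adic conductor of the Kronecker symbol. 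Classes with $(a,f_nd^2n)>1$ give a zero summand or are empty by Lemma~\ref{lem:admissible-classes}. The admissible classes are coprime to $d$, so I can apply Lemma~\ref{lem:prime-deleted-hooley} with modulus $m=f_nd^2n$ to each remaining class. Summing over classes, the main term of $\mathcal S_{d,n,r}$ is exactly $\frac{Y}{\zeta(2)}\Gamma_{r,Q}(d,n)$.

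The error from each class is $O(X^\varepsilon(\sqrt{X/m}+m^{1/2+\varepsilon}))$, and there are at most $\ll m$ classes. This gives the per-$(d,n)$ error
\[
X^\varepsilon\bigl(\sqrt{Xm}+m^{3/2+\varepsilon}\bigr),\qquad m\asymp d^2n .
\]
After dividing by $nd$ and summing over $n\leq Y^\sigma$ and $d\leq Y^\tau$, the first piece contributes $\sqrt X\sum_n n^{-1/2}\sum_d 1\ll \sqrt X\,Y^{\sigma/2}Y^{\tau}$. That is one factor of $Y^\tau$ too many. To avoid it, I will use the fact that the number of admissible classes modulo $d^2$ is at most $2$. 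Then only $\ll n$ classes modulo $f_nd^2n$ occur, so the per-pair error is $\ll X^\varepsilon\bigl(n\sqrt{X/(d^2n)}+n(d^2n)^{1/2+\varepsilon}\bigr)$. Dividing by $nd$ and summing gives
\[
\sqrt X\,Y^{\sigma/2}\sum_d d^{-2}\ll\sqrt X\,Y^{\sigma/2},
\qquad
\sum_{n,d}n^{1/2}\ll Y^{\tau+3\sigma/2}.
\]
These are the two stated error terms, up to the factor $(QXY)^\varepsilon$.

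For the main term, I will invoke Lemma~\ref{lem:local-arithmetic} with $D=Y^\tau$ and $M=Y^\sigma$. This gives
\[
\frac{Y}{\zeta(2)}\bigl(BC(r)+O(P^{-2})+O((QY)^\varepsilon(Y^{-2\tau}+Y^{-\sigma/5}))\bigr).
\]
Multiplying out produces the terms $Y BC(r)/\zeta(2)$, $O(Y/P^2)$, $Y^{1-2\tau}$ and $Y^{1-\sigma/5}$.

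The main obstacle is the counting step just described: the error term must be bounded using the number of admissible classes (bounded by Lemma~\ref{lem:admissible-classes}) rather than $\varphi(m)$, so that the $\sqrt X$ term does not pick up an extra factor of $Y^\tau$. I also need the constancy of the characters on classes modulo $f_nd^2n$, which requires care at the prime $2$ when $d$ is even. Here I would reuse the CRT factorization~\eqref{eq:residue-factorization} and the fact that the lemmas only require $Q$ to be odd and coprime to $rd$, which Lemma~\ref{lem:p-coprime} guarantees.
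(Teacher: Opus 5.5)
Your proposal is correct and is essentially the paper's proof. The paper splits into classes modulo $f_nd^2n$ and applies Lemma~\ref{lem:prime-deleted-hooley} to the $\ll n$ lifts of the at most two admissible classes modulo $d^2$, which gives the per-pair error $(QXn)^\varepsilon\bigl(\sqrt{Xn}/d+dn^{3/2}\bigr)$, exactly your corrected count; it then invokes Lemma~\ref{lem:local-arithmetic} with $D=Y^\tau$, $M=Y^\sigma$ for the main term, and your remarks on the periodicity of the Kronecker symbols and the vanishing of non-coprime classes only make explicit what the paper states briefly.
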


\begin{proof}
For a fixed admissible $d$, split the $N$-sum into classes modulo $f_nd^2n$.
Only classes coprime to this modulus contribute.  Admissibility gives $(N,d)=1$; either Kronecker symbol vanishes when the corresponding class is not coprime to $n$; and, when $f_n=4$, the first symbol also forces the class to be odd.
Applying Lemma~\ref{lem:prime-deleted-hooley} to the remaining classes and then using~\eqref{eq:residue-factorization} gives
\begin{equation}\label{eq:S-small}
\mathcal S_{d,n,r}
=
\frac{Y}{\zeta(2)}
\Gamma_{r,Q}(d,n)
+
O_\varepsilon\left(
(QXn)^\varepsilon
\left(
\frac{\sqrt{Xn}}d+d\,n^{3/2}
\right)
\right).
\end{equation}
The number of admissible classes is at most two, so it is absorbed in the implied constant.  Divide by $nd$ and sum.
The two errors in~\eqref{eq:S-small} contribute, respectively,
\[
\ll_\varepsilon
(QXY)^\varepsilon\sqrt X\,Y^{\sigma/2}
\quad\text{and}\quad
\ll_\varepsilon
(QXY)^\varepsilon Y^{\tau+3\sigma/2}.
\]
Applying Lemma~\ref{lem:local-arithmetic} with $D=Y^\tau$ and $M=Y^\sigma$ completes the proof.
\end{proof}

\begin{lemma}[Prime-deleted completed character sum]
\label{lem:prime-deleted-character-sum}
Let $X\geq2$ and $1\leq Y\leq X$.  Let $m,w,D\geq1$, and let $q,t\in\mathbb Z$ satisfy $(t,D)=1$ and $wt\equiv q\pmod D$.
Let $P$ be a prime with $P\nmid D$, and put $\chi_m(\cdot)=(\frac{\cdot}{m})$.
Define
\[
\mathcal P(m;w,D,q)
=
\{\ell\mid m:\ \ell\text{ odd},\ v_\ell(m)\text{ odd},\
\ell\nmid q,\ (D,w,\ell)=1\}
\]
and
\[
\mathfrak m
=
\frac{m}{\displaystyle
\prod_{\ell\in\mathcal P(m;w,D,q)}(\sqrt\ell/2)}.
\]
Then
\begin{equation}
\begin{aligned}
\sum_{\substack{M\in[X,X+Y]\\M\equiv t\ (\bmod D)\\P\nmid M}}
\mu^2(M)\chi_m(M)
\chi_m\left(\frac{wM-q}{D}\right)
&\ll
\sqrt X+\frac{\mathfrak mY}{mD}
+\frac{\log(2X)\sqrt{\mathfrak mY}}{\sqrt D},
\end{aligned}
\label{eq:prime-deleted-character-sum}
\end{equation}
with an absolute implied constant.
\end{lemma}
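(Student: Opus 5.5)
Since $P\nmid D$, we can use the same inclusion–exclusion in $P$ as in Lemma~\ref{lem:prime-deleted-hooley} to strip away the condition $P\nmid M$. This reduces the claim to the unrestricted character-sum bound, i.e.\ the version without the $P$-condition that Zubrilina proves for the error terms of the hyperbolic class-number average.

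\textbf{Reduction in $P$.} I would write
\[
\mu^2(M)\mathbf 1_{P\nmid M}=\sum_{j\geq0}(-1)^j\mathbf 1_{P^j\mid M}\,\mu^2(M/P^j),
\]
which is a finite sum since $P^j\leq 2X$. On the $j$-th term I substitute $M=P^jM'$, so that $M'$ runs over an interval of length $Y/P^j$ near $X/P^j$. Because $P\nmid D$, the progression condition becomes $M'\equiv tP^{-j}\pmod D$, and the new residue $t'=tP^{-j}$ is still coprime to $D$.

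\textbf{Transforming the characters.} Next I rewrite the two Kronecker symbols. The first is $\chi_m(P^jM')=\chi_m(P)^j\chi_m(M')$, a unimodular constant times $\chi_m(M')$. The second argument is $(wP^jM'-q)/D$, which has the same shape with $w$ replaced by $w'=wP^j$. Then $w't'\equiv q\pmod D$ still holds, and $(D,w',\ell)=(D,w,\ell)$ because $P\nmid D$. Hence the set $\mathcal P(m;w',D,q)$ equals $\mathcal P(m;w,D,q)$, and $\mathfrak m$ is unchanged. When $P\mid m$ the first symbol vanishes for $j\geq1$, so only $j=0$ survives.

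\textbf{Unrestricted estimate and summation.} For each $j$, the unrestricted estimate (the analogue of~\eqref{eq:prime-deleted-character-sum} without $P\nmid M$, i.e.\ Zubrilina's Lemma) gives the bound
\[
\sqrt{X/P^j}+\frac{\mathfrak m Y}{P^jmD}+\frac{\log(2X)\sqrt{\mathfrak mY/P^j}}{\sqrt D}.
\]
This requires $Y/P^j\leq X/P^j$; when $Y/P^j<1$ the sum is trivially at most about $1$ term, which is absorbed into $\sqrt X$. Each of the three terms decays geometrically in $j$ with ratio at most $P^{-1/2}\leq 2^{-1/2}$. Summing over $j$ therefore multiplies the right-hand side by an absolute constant, which gives~\eqref{eq:prime-deleted-character-sum}.

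\textbf{Main obstacle.} The main obstacle is establishing the unrestricted bound in exactly this normalized form, with $\mathfrak m$ defined through $\mathcal P$. If I cannot cite it directly, I would prove it as follows. First write $\mu^2(M)=\sum_{e^2\mid M}\mu(e)$ and split at $e\leq E$. For small $e$, complete the sum modulo $mD e^2$ using Pólya–Vinogradov. The complete sum factors over primes $\ell\mid m$. At each odd $\ell$ with odd valuation, $\ell\nmid q$ and $(D,w,\ell)=1$, the local sum is a Jacobi-type sum $\sum_a\chi_\ell(a)\chi_\ell(wa-q)$ of size $\sqrt\ell$ instead of $\ell$. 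This saving is precisely the factor $\sqrt\ell/2$ built into $\mathfrak m$, and it produces the terms $\mathfrak mY/(mD)$ and $\sqrt{\mathfrak mY/D}\log X$. The large $e$ contribute $\ll X/E$ trivially. Balancing $E$ against the completion error gives the $\sqrt X$ term. The delicate point is the local bookkeeping when $\ell\mid D$ or $\ell\mid w$; that is why these primes are excluded from $\mathcal P$.
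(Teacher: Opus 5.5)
Your proposal is correct and is essentially the paper's proof: the same inclusion--exclusion identity in $P$, the rescaling to $w_j=wP^j$, $t_j\equiv tP^{-j}\pmod D$ with $w_jt_j\equiv q\pmod D$ and $\mathfrak m$ unchanged, the separate handling of $P\mid m$, an appeal to the unrestricted bound \cite[Lemma~3.19]{Zubrilina2025}, and summation of three geometric series. The only extra bookkeeping in the paper is that it also sets aside the indices with $X/P^j<2$, so that the cited lemma's hypothesis $X\geq2$ holds, and notes that the $O(\log X)$ set-aside indices contribute $O(1)$ each; your fallback sketch of the unrestricted bound is not needed, since the paper simply cites it.
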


\begin{proof}
The unrestricted version of this sum is covered by
\cite[Lemma~3.19]{Zubrilina2025}.
If $P\mid m$, every term with $P\mid M$ already vanishes because $\chi_m(M)=0$, so there is
nothing to prove.
Suppose $P\nmid m$.
Use the pointwise identity
\[
\mu^2(M)\mathbf1_{P\nmid M}
=
\sum_{j\geq0}(-1)^j\mathbf1_{P^j\mid M}\mu^2(M/P^j).
\]
The sum is finite.  In its $j$-th term write $M=P^jM_0$.
Apart from the harmless constant $\chi_m(P)^j$, the resulting sum has the same form with
\[
w_j=wP^j,
\qquad
t_j\equiv tP^{-j}\pmod D.
\]
Moreover, $M_0$ lies in an interval with initial point $X/P^j$ and length $Y/P^j$, and
\[
w_jt_j\equiv q\pmod D.
\]
Separate the indices satisfying $X/P^j<2$ or $Y/P^j<1$.
For each such index, the scaled interval has length below $2$ or an initial point below $2$ together with length at most that initial point, and hence contains $O(1)$ integers.
There are $O(\log X)$ indices, so their total contribution is $O(\log X)$ and is absorbed by the $\sqrt X$ term in the claimed bound.
For every remaining index, $X/P^j\geq2$ and $1\leq Y/P^j\leq X/P^j$, so \cite[Lemma~3.19]{Zubrilina2025} applies to the scaled sum.
Since $P\nmid mD$, the set of saving primes and hence $\mathfrak m$ are unchanged.
Its absolute value is at most a constant times
\[
\sqrt{X/P^j}
+\frac{\mathfrak mY}{mDP^j}
+\frac{\log(2X)\sqrt{\mathfrak mY/P^j}}{\sqrt D}.
\]
Summing the three convergent geometric series proves
\eqref{eq:prime-deleted-character-sum}.
\end{proof}

\begin{lemma}\label{lem:large-n}
Let $T\geq Y^\sigma$.  For $0\leq\sigma,\tau\leq1$,
\[
\sum_{\substack{Y^\sigma<n\leq T\\d\leq Y^\tau}}
\frac{\mathcal S_{d,n,r}}{nd}
\ll_\varepsilon
(QXT)^\varepsilon
\left(
\sqrt X+Y^{1-\sigma/2}+\sqrt Y\,T^{1/4}
\right).
\]
\end{lemma}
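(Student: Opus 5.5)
The plan is to follow the large-$n$ estimate of Zubrilina, with Lemma~\ref{lem:prime-deleted-character-sum} used in place of \cite[Lemma~3.19]{Zubrilina2025}. That lemma keeps the condition $P\nmid N$, and the substitution $Q=P^k$ does not affect its shape.

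Fix an admissible $d\leq Y^\tau$. Then $\mathcal R_{r,d}(Q)$ has at most two classes, so $\mathcal S_{d,n,r}$ is a sum of at most two sums over a single progression $N\equiv t\pmod{d^2}$ with $(t,d)=1$. For each such class, put $D=d^2$, $w=r^2$ and $q=4Q$; the congruence $wt\equiv q\pmod D$ holds by the definition of the admissible classes. Take $m=n$. Then
\[
\left(\frac{(r^2N-4Q)/d^2}{n}\right)=\chi_n\left(\frac{wN-q}{D}\right),
\]
so each class sum has exactly the shape of~\eqref{eq:prime-deleted-character-sum}. Since $P\nmid d$ by Lemma~\ref{lem:p-coprime}, the hypothesis $P\nmid D$ is satisfied, and we obtain
\[
\mathcal S_{d,n,r}\ll \sqrt X+\frac{\mathfrak nY}{nd^2}+\frac{\log(2X)\sqrt{\mathfrak nY}}{d},
\]
where $\mathfrak n=n\prod_{\ell\in\mathcal P}(2/\sqrt\ell)$. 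Write $n=n_1n_2^2$ with $n_1$ squarefree. Apart from the primes dividing $2q$ or $(D,w)$, the saving primes are the odd primes with odd exponent in $n$. Hence $\mathfrak n\ll_\varepsilon n^{\varepsilon}n_2^2\sqrt{n_1}\,(dQr)^\varepsilon\gcd(n,4Qr^2d^2)^{1/2}$, and the factor $2^{\omega(n)}$ is absorbed into $n^\varepsilon$.

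Next divide by $nd$ and sum over $Y^\sigma<n\leq T$ and $d\leq Y^\tau$. The $\sqrt X$ term gives $\sqrt X\sum_{n\leq T}n^{-1}\sum_{d}d^{-1}\ll(XT)^\varepsilon\sqrt X$. The middle term gives
\[
Y\sum_d d^{-3}\sum_{n>Y^\sigma}\frac{\mathfrak n}{n^2}\ll Y\sum_{n_2}\sum_{n_1n_2^2>Y^\sigma}n_1^{-3/2+\varepsilon}n_2^{-2},
\]
after accounting for the gcd factor by a divisor-type bound. Splitting according to whether $n_2^2\leq Y^{\sigma}$ or $n_2^2>Y^\sigma$ gives $\ll Y^{1-\sigma/2+\varepsilon}$. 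The last term gives
\[
\sqrt Y\sum_d d^{-2}\sum_{n\leq T}\frac{\sqrt{\mathfrak n}}{n}\ll\sqrt Y\sum_{n_1n_2^2\leq T}n_1^{-3/4}n_2^{-1}T^\varepsilon\ll \sqrt Y\,T^{1/4+\varepsilon}.
\]
Together these are the three stated terms, uniform in $r$ in the range of Lemma~\ref{lem:local-arithmetic}.

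The main obstacle is the bookkeeping for $\mathfrak n$: the primes excluded from $\mathcal P(n;w,D,q)$, namely those dividing $q=4Q$ or $\gcd(d^2,r^2,\ell)$, and the even part of $n$. For the primes dividing $q$, only $P$ is new relative to Zubrilina's case, and it contributes at most a factor $P^{v_P(n)/2}$. I would handle this by grouping $n$ by its $P$-part: the sum over $P^\alpha\mid n$ of $(P^{\alpha/2})^{1/2}P^{-\alpha}$, or of $P^{\alpha/2}P^{-2\alpha}$, converges geometrically, so $P$ costs only $O(1)$. For the primes dividing $rd$, I would use $\gcd(n,r^2d^2)^{1/2}$ together with the sum over $d$, which produces at most $(QXT)^\varepsilon$ after a divisor bound, since $r\ll\sqrt Q$. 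Because the gcd with $d$ is summed against $d^{-2}$ or $d^{-3}$, a crude estimate suffices. If needed, one can instead follow \cite[Lemma~3.20]{Zubrilina2025} verbatim, observing that it uses only that $q/4$ is odd and coprime to $rd$.
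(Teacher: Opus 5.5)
Your proposal is correct and follows essentially the same route as the paper: you apply Lemma~\ref{lem:prime-deleted-character-sum} with $m=n$, $w=r^2$, $D=d^2$, $q=4Q$ to each of the at most two admissible classes, then split $n$ into square, squarefree, $2$- and $P$-parts and sum the two resulting series over $n$ and $d$. The paper avoids your $\gcd(n,4Qr^2d^2)^{1/2}$ bookkeeping by noting that admissibility gives $(r,d)\in\{1,2\}$. Hence $(d^2,r^2,\ell)=1$ for every odd $\ell$, and the saving set is exactly the odd primes $\ell\ne P$ with odd exponent in $n$. The $P$-part $P^{\beta}$ of $n$ then simply enters $\mathfrak n$ in full, giving geometric weights $P^{-\beta}$ and $P^{-\beta/2}$ in the two series rather than the ones you displayed.
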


\begin{proof}
For an integer $n$, let
\[
\mathcal P(n)
=
\{q\mid n:q\ {\rm odd},\ v_q(n)\ {\rm odd},\ q\neq P\},
\qquad
\mathfrak n
=
\frac{n}{\displaystyle\prod_{q\in\mathcal P(n)}(\sqrt q/2)}.
\]
For each admissible residue class in the definition of
$\mathcal S_{d,n,r}$, apply Lemma~\ref{lem:prime-deleted-character-sum} with
\[
m=n,
\qquad
w=r^2,
\qquad
D=d^2,
\qquad
q=4Q.
\]
The congruence and coprimality hypotheses of that lemma are exactly the defining conditions on an admissible class.
By admissibility, $P\nmid rd$ and no odd prime divides both $d$ and $r$.
Since $Q=P^k$, the saving-prime set in the lemma is precisely $\mathcal P(n)$.
There are at most two admissible classes, and hence
\begin{equation}\label{eq:large-n-pointwise}
\mathcal S_{d,n,r}
\ll_\varepsilon
(QXn)^\varepsilon
\left(
\sqrt X+\frac{\mathfrak nY}{nd^2}
+\frac{\sqrt{\mathfrak nY}}d
\right).
\end{equation}

Write uniquely
\[
n=a^2b\,2^\alpha P^\beta,
\qquad
b\ {\rm square\mbox{-}free},
\qquad
(ab,2P)=1.
\]
The divisor bound gives
\[
\mathfrak n
\ll_\varepsilon
a^2b^{1/2+\varepsilon}2^\alpha P^\beta.
\]
Consequently,
\[
\sum_{n>Y^\sigma}\frac{\mathfrak n}{n^2}
\ll_\varepsilon Y^{-\sigma/2+\varepsilon},
\qquad
\sum_{n\leq T}\frac{\sqrt{\mathfrak n}}n
\ll_\varepsilon T^{1/4+\varepsilon}.
\]
Indeed, with $U=Y^\sigma$, the two left-hand sides are bounded by
\[
\sum_{\alpha,\beta,a}
\frac1{2^\alpha P^\beta a^2}
\sum_{b>U/(2^\alpha P^\beta a^2)}
\frac1{b^{3/2-\varepsilon}}
\]
and
\[
\sum_{\substack{\alpha,\beta,a\\
2^\alpha P^\beta a^2\leq T}}
\frac1{2^{\alpha/2}P^{\beta/2}a}
\sum_{b\leq T/(2^\alpha P^\beta a^2)}
\frac1{b^{3/4-\varepsilon}},
\]
respectively.
Splitting the first sum according as
$2^\alpha P^\beta a^2\leq U$ or $>U$, and evaluating the inner $b$-sums, gives the two claimed estimates.
Dividing~\eqref{eq:large-n-pointwise} by $nd$, summing over $d$ and $n$, and absorbing logarithms in $(QXT)^\varepsilon$ proves the
lemma.
\end{proof}

\begin{proposition}\label{prop:hyperbolic-one-r}
Uniformly in this range of $r$, one has
\begin{multline*}
\mathcal H_k(r)
=
\frac{Y\sqrt{4P^kX-r^2X^2}}{\zeta(2)\pi}BC(r)
\\
+
O_{k,\varepsilon}\left(
(P^kXY)^\varepsilon
\left(
(YP^kX)^{3/5}
+
\frac{Y^2P^{k/2}}{\sqrt X}
+
r\sqrt X\,Y^{3/2}
+
P^{k/2}XY^{5/18}
\right.\right.
\\ \left.\left.
+
P^{k/2}\sqrt X\,Y^{8/9}
\right)
\right)
+
O_k\left(P^{k/2-2}\sqrt X\,Y\right).
\end{multline*}
\end{proposition}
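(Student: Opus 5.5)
We follow the structure of Zubrilina's treatment of the hyperbolic term (her Proposition~3.7 and the lemmas leading to it), with $P$ replaced by $Q=P^k$ throughout. The prime $P$ enters only through the coprimality guaranteed by Lemma~\ref{lem:p-coprime} and the deleted condition $P\nmid N$.

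\emph{Step 1: expansion by class numbers and Dirichlet's formula.} For fixed $r$ in the stated range, write
\[
H_1(r^2N^2-4QN)=\sum_{d^2\mid 4QN-r^2N^2}h\bigl(N(r^2N-4Q)/d^2\bigr)+O(1).
\]
Since $N$ is squarefree and $P\nmid N$, every square divisor of $N(r^2N-4Q)$ divides $r^2N-4Q$ up to the standard factor~$4$. The conditions that the quotient be a discriminant are then exactly $N\in\mathcal A_{r,d}(Q)$. Lemma~\ref{lem:p-coprime} gives $P\nmid rd$, so Lemma~\ref{lem:admissible-classes} applies. Next apply Dirichlet's class number formula
\[
h(\Delta)=\frac{\sqrt{|\Delta|}}{\pi}L(1,\chi_\Delta)
\]
for the negative discriminant $\Delta=N(r^2N-4Q)/d^2$. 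We have $\sqrt{|\Delta|}=\sqrt{4QN-r^2N^2}/d$. Replace this by its value at $N=X$, namely $\sqrt{4QX-r^2X^2}/d$, at a cost controlled by $Y$ times its $N$-derivative. This produces the term $Y^2P^{k/2}/\sqrt X$, together with a boundary contribution $r\sqrt X\,Y^{3/2}$ when $r$ is near the endpoint, where the square root degenerates.

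\emph{Step 2: truncating $L(1,\chi_\Delta)$ and splitting.} Write $L(1,\chi_\Delta)=\sum_n\chi_\Delta(n)/n$ and truncate at $n\le T$ using a smoothed tail or Pólya--Vinogradov/Burgess estimates. Factor $\chi_\Delta(n)=(N/n)\bigl(((r^2N-4Q)/d^2)/n\bigr)$, which yields the sums $\mathcal S_{d,n,r}$. Split the range into $d\le Y^\tau$ and $d>Y^\tau$, and into $n\le Y^\sigma$ and $Y^\sigma<n\le T$.
\begin{itemize}
\item For small $n$ and small $d$, Lemma~\ref{lem:small-n} gives the main term
\[
\frac{Y}{\zeta(2)}BC(r)\quad\text{times}\quad \frac{\sqrt{4QX-r^2X^2}}{\pi}.
\]
Its $O(Y/P^2)$ error, multiplied by $\sqrt{QX}$, produces the final term $O_k(P^{k/2-2}\sqrt X\,Y)$.
\item For large $n$, Lemma~\ref{lem:large-n} applies.
\item For large $d$, we use the trivial bound $h(\Delta)\ll|\Delta|^{1/2+\varepsilon}$ together with counting $N$ in the progressions modulo $d^2$. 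This gives a contribution of roughly $\sqrt{QX}\,(Y/Y^{\tau}+\sqrt X)$ up to $X^\varepsilon$. For very large $d$, where $d^2$ exceeds $Y$, we count divisors instead of progressions.
\end{itemize}

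\emph{Step 3: optimizing the parameters.} Collect the errors. Each error from Lemmas~\ref{lem:small-n} and~\ref{lem:large-n} is multiplied by $\sqrt{QX}=P^{k/2}\sqrt X$. The truncation error in $T$ and the large-$d$ error are added to these. Then choose $\sigma$, $\tau$, and $T$ as in Zubrilina: roughly $\tau$ balancing $Y^{1-2\tau}$ against $Y^{\tau+3\sigma/2}$, and $\sigma=10/9$-type choices giving $Y^{1-\sigma/5}$. This produces the exponents $(YQX)^{3/5}$, $P^{k/2}XY^{5/18}$, and $P^{k/2}\sqrt X\,Y^{8/9}$.

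The main obstacle is Step~3 and the truncation of $L(1,\chi_\Delta)$. One must check that the exponent bookkeeping, which Zubrilina carried out with $P$, remains valid when $Q=P^k$ appears only inside $\varepsilon$-powers and the factor $\sqrt{Q}$. One must also check that the term $(YQX)^{3/5}$ from the tail/large-$d$ balance is uniform in $r$. Since Lemmas~\ref{lem:local-arithmetic}--\ref{lem:large-n} are already uniform in $r$ and depend on $Q$ only through $Q^\varepsilon$, I expect this to be a faithful transcription, with implied constants depending on $k$.
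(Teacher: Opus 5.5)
Your outline is the paper's proof step for step: the square-divisor expansion via Lemmas~\ref{lem:admissible-classes} and~\ref{lem:p-coprime}, the class number formula, freezing $\sqrt{4QN-r^2N^2}$ at $N=X$, P\'olya--Vinogradov truncation of $L(1,\chi)$ at $n\le T$, the cut at $d=Y^\tau$, and then Lemmas~\ref{lem:small-n} and~\ref{lem:large-n}. This statement is, however, precisely an error-term claim, and your bookkeeping is wrong in two places.

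First, the large-$d$ piece is not $\sqrt{QX}\,(Y^{1-\tau}+\sqrt X)$. Keep the factor $1/d$ in $\sqrt{|\Delta|}=\sqrt{4QN-r^2N^2}/d$. Use $L(1,\chi)\ll(QX)^\varepsilon$, and note that each $d$ contributes at most two classes modulo $d^2$, hence $\ll Y/d^2+1$ values of $N$. Then
\[
(QX)^{1/2+\varepsilon}\sum_{Y^\tau<d<2\sqrt Q}\Bigl(\frac{Y}{d^2}+1\Bigr)\frac1d\ll(QX)^{1/2+\varepsilon}\bigl(Y^{1-2\tau}+\log 2Q\bigr),
\]
with no separate divisor count needed for very large $d$. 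With $Y^{1-\tau}$ and $\tau=1/18$ (the value used below), you would be left with $P^{k/2}\sqrt X\,Y^{17/18}$ instead of the displayed $P^{k/2}\sqrt X\,Y^{8/9}$.

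Second, the parameters are $\tau=1/18$ and $\sigma=5/9$, that is, $\sigma=10\tau$. The value $\sigma=10/9$ lies outside the range $0\le\sigma\le1$ of both lemmas and would make $Y^{\tau+3\sigma/2}$ exceed $Y$. The correct choice gives $Y^{1-2\tau}=Y^{\tau+3\sigma/2}=Y^{1-\sigma/5}=Y^{8/9}$ and $\sqrt X\,Y^{\sigma/2}=\sqrt X\,Y^{5/18}$. The terms $\sqrt X$ and $Y^{1-\sigma/2}=Y^{13/18}$ from Lemma~\ref{lem:large-n} are dominated. Multiplying by $\sqrt{4QX-r^2X^2}\le2\sqrt{QX}$ then gives $P^{k/2}XY^{5/18}$ and $P^{k/2}\sqrt X\,Y^{8/9}$.

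The term $(YQX)^{3/5}$ does not come from large $d$. It comes from $T=(YQX)^{2/5}$, which equates the truncation error $QXY/T$ with the contribution $\sqrt{QX}\cdot\sqrt Y\,T^{1/4}$ from Lemma~\ref{lem:large-n}. You must also check that this $T$ is admissible there, i.e.\ $T\ge Y^\sigma$. Since the $r$-range is empty unless $4Q>X+Y$, one has $T/Y=(QX)^{2/5}Y^{-3/5}\gg X^{4/5}Y^{-3/5}\geq X^{1/5}$.

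You also gloss over two smaller points. For $N=2N_0$ the extra factor $4$ cannot be removed, because the resulting quotient is $2$ or $3\bmod4$. And the $O(1)$ Hurwitz correction sums to $O(Y)$, which is absorbed by $(QXY)^{3/5}$.
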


\begin{proof}
Put
\[
\Delta_r=4QX-r^2X^2.
\]
For $N$ in the interval, the class number formula and
Lemma~\ref{lem:admissible-classes} give
\begin{equation}\label{eq:class-number-double-sum}
H_1(r^2N^2-4QN)
=
\frac1\pi
\sum_{\substack{d^2\mid r^2N-4Q\\
N\bmod d^2\in\mathcal R_{r,d}(Q)}}
\frac{\sqrt{4QN-r^2N^2}}{d}
L\left(1,\chi_{\frac{N(r^2N-4Q)}{d^2}}\right)
+
O(1).
\end{equation}
To justify that this accounts for every square divisor in the Hurwitz class number, note that $N$ is squarefree and $P\nmid N$.
A square factor which uses a prime from both $N$ and $r^2N-4Q$ can therefore occur only at $2$.
If $N=2N_0$, with $N_0$ odd, removal of this extra factor $4$ would leave
\[
\frac{r^2N_0^2-2QN_0}{d^2}.
\]
Here $d$ is odd: the odd parts of $N_0$ and
$r^2N_0-2Q$ are coprime, and the latter has $2$-adic valuation at most $1$.
This is $2\pmod4$ when $r$ is even and $3\pmod4$ when $r$ is odd, so its Gauss class number is zero.
Thus only the divisors displayed in~\eqref{eq:class-number-double-sum} contribute.
The discriminant condition is encoded by Lemma~\ref{lem:admissible-classes}.
Moreover $d<2\sqrt Q$, and Lemma~\ref{lem:p-coprime} gives $P\nmid rd$.
The $O(1)$-term in~\eqref{eq:class-number-double-sum}, summed over $N$, is $O(Y)$.
Since the present range is nonempty only when $Q>(X+Y)/4$, this is absorbed by $(QXY)^{3/5}$.

Let $0<\tau<1/2$.
The bound
\[
L(1,\chi)\ll_\varepsilon(QX)^\varepsilon
\]
and the fact that each $d$ gives at most two residue classes show that the part of~\eqref{eq:class-number-double-sum} with $d>Y^\tau$, summed over $N$, is
\begin{align}
\ll_\varepsilon
(QX)^{1/2+\varepsilon}
\sum_{Y^\tau<d<2\sqrt Q}
\left(\frac{Y}{d^2}+1\right)\frac1d
\ll_\varepsilon
(QX)^{1/2+\varepsilon}
\left(Y^{1-2\tau}+\log(2Q)\right).
\label{eq:large-d}
\end{align}

For $d\leq Y^\tau$, we freeze the square-root factor at $N=X$.
Uniformly in the interior range,
\[
\left|
\sqrt{4QN-r^2N^2}-\sqrt{\Delta_r}
\right|
\ll
\frac{\sqrt Q\,Y}{\sqrt X}
+r\sqrt{XY}.
\]
After summing over $N$ and $d$, the resulting error is
\begin{equation}\label{eq:square-root-variation}
O_\varepsilon\left(
(QXY)^\varepsilon
\left(
\frac{\sqrt Q\,Y^2}{\sqrt X}
+r\sqrt X\,Y^{3/2}
\right)
\right).
\end{equation}

For a quadratic character of discriminant $D$, partial summation together with P\'olya--Vinogradov gives
\[
L(1,\chi_D)
=
\sum_{n\leq T}\frac{\chi_D(n)}n
+
O\left(\frac{\sqrt{|D|}\log(2|D|)}T\right).
\]
Since $|D|\ll QX/d^2$, inserting this in the remaining part of~\eqref{eq:class-number-double-sum} produces the truncation error
\begin{equation}\label{eq:L-truncation-error}
O_\varepsilon\left(
(QXY)^\varepsilon\frac{QXY}{T}
\right).
\end{equation}
The identity
\[
\left(\frac{N(r^2N-4Q)/d^2}{n}\right)
=
\left(\frac Nn\right)
\left(\frac{(r^2N-4Q)/d^2}{n}\right)
\]
now yields
\begin{multline}\label{eq:hyperbolic-master}
\mathcal H_k(r)
=
\frac{\sqrt{\Delta_r}}{\pi}
\sum_{\substack{d\leq Y^\tau\\n\leq T}}
\frac{\mathcal S_{d,n,r}}{nd}
\\
+
O_\varepsilon\left(
(QXY)^\varepsilon
\left(
\frac{QXY}{T}
+\frac{\sqrt Q\,Y^2}{\sqrt X}
+r\sqrt X\,Y^{3/2}
+\sqrt{QX}\,Y^{1-2\tau}
\right)
\right),
\end{multline}
where the logarithm in~\eqref{eq:large-d} has been absorbed into the last term.
Indeed, $\log(2Q)\ll\log X=o(Y^{1-2\tau})$ because $\tau<1/2$ and the short-window hypotheses make $Y$ a fixed positive power of $X$.

Choose
\[
\tau=\frac1{18},
\qquad
\sigma=\frac59.
\]
Lemmas~\ref{lem:small-n} and~\ref{lem:large-n} give
\begin{equation}\label{eq:arithmetic-double-sum}
\sum_{\substack{d\leq Y^\tau\\n\leq T}}
\frac{\mathcal S_{d,n,r}}{nd}
=
\frac{Y}{\zeta(2)}BC(r)
+
O\left(\frac{Y}{P^2}\right)
+
O_\varepsilon\left(
(QXT)^\varepsilon
\left(
\sqrt X\,Y^{5/18}
+Y^{8/9}
+\sqrt Y\,T^{1/4}
\right)
\right).
\end{equation}
The range of $r$ can be nonempty only if $4Q>X+Y$.
Hence, with
\[
T=(YQX)^{2/5},
\]
\[
\frac TY
=
\frac{(YQX)^{2/5}}Y
\gg X^{(1+3\delta_1)/5+o(1)}.
\]
Thus, in particular, $T>Y$, and Lemma~\ref{lem:large-n} applies.
Finally,
\[
\frac{QXY}{T}
\asymp
\sqrt{QXY}\,T^{1/4}
\asymp
(QXY)^{3/5}.
\]
Substituting~\eqref{eq:arithmetic-double-sum} into \eqref{eq:hyperbolic-master}, using $\sqrt{\Delta_r}\leq\sqrt{QX}$, and recalling $Q=P^k$, gives exactly the error stated in the
proposition.
\end{proof}

Summing Proposition~\ref{prop:hyperbolic-one-r} gives the following.

\begin{corollary}\label{cor:hyperbolic-interior}
Let
\[
R_0=\frac{2P^{k/2}}{\sqrt{X+Y}}.
\]
Then
\begin{equation*}
\frac{\zeta(2)\pi}{XY}
\sum_{1\leq r<R_0}\mathcal H_k(r)
=
B\sum_{1\leq r<R_0}
C(r)\sqrt{\frac{4P^k}{X}-r^2}
+
O_{k,\varepsilon}\left((P^kXY)^\varepsilon\mathcal E_{\mathrm{hyp}}\right)
+O_k\left(\frac{P^{k-2}}X\right),
\end{equation*}
where
\begin{equation*}
\mathcal E_{\mathrm{hyp}}
=
\frac{P^{11k/10}}{Y^{2/5}X^{9/10}}
+
\frac{YP^k}{X^2}
+
\frac{P^kY^{1/2}}{X^{3/2}}
+
\frac{P^k}{X^{1/2}Y^{13/18}}
+
\frac{P^k}{XY^{1/9}}.
\end{equation*}
\end{corollary}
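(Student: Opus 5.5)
The corollary follows by summing Proposition~\ref{prop:hyperbolic-one-r} over the integers $1\leq r<R_0$ and multiplying by $\zeta(2)\pi/(XY)$. I would treat the main term and each error term separately.

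\emph{Main term.} I would write
\[
\sqrt{4P^kX-r^2X^2}=X\sqrt{4P^k/X-r^2}.
\]
Then
\[
\frac{\zeta(2)\pi}{XY}\cdot\frac{Y\sqrt{4P^kX-r^2X^2}}{\zeta(2)\pi}BC(r)=B\,C(r)\sqrt{\frac{4P^k}{X}-r^2}.
\]
Summing over $r<R_0$ gives the displayed main term exactly. No boundary adjustment is needed here, because the corollary keeps the cutoff $R_0$ rather than $2\sqrt y$.

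\emph{Error terms.} The number of $r$ is $R_0\ll P^{k/2}X^{-1/2}$, and $\sum_{r<R_0}r\ll R_0^2\ll P^k/X$. Every term of the proposition except $r\sqrt X\,Y^{3/2}$ is independent of $r$, so it is multiplied by $R_0$; that term is multiplied by $R_0^2$. After normalizing by $(XY)^{-1}$ I get:
\begin{itemize}
\item $(YP^kX)^{3/5}\cdot P^{k/2}X^{-1/2}/(XY)=P^{11k/10}X^{-9/10}Y^{-2/5}$;
\item $Y^2P^{k/2}X^{-1/2}\cdot P^{k/2}X^{-1/2}/(XY)=YP^k/X^2$;
\item $\sqrt X\,Y^{3/2}\cdot(P^k/X)/(XY)=P^kY^{1/2}/X^{3/2}$;
\item $P^{k/2}XY^{5/18}\cdot P^{k/2}X^{-1/2}/(XY)=P^kX^{-1/2}Y^{-13/18}$;
\item $P^{k/2}\sqrt X\,Y^{8/9}\cdot P^{k/2}X^{-1/2}/(XY)=P^kX^{-1}Y^{-1/9}$.
\end{itemize}
These are exactly the five terms of $\mathcal E_{\mathrm{hyp}}$. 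The separate error $P^{k/2-2}\sqrt X\,Y$ becomes
\[
P^{k/2-2}\sqrt X\,Y\cdot P^{k/2}X^{-1/2}/(XY)=P^{k-2}/X.
\]

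\emph{Obstacle.} The only real issue is that the constant in Proposition~\ref{prop:hyperbolic-one-r} must be uniform in $r$, which that proposition asserts. The factor $(P^kXY)^\varepsilon$ survives the summation unchanged, since $R_0$ is polynomial in these quantities and the summation only introduces counting factors already accounted for above. So the corollary reduces to this bookkeeping.
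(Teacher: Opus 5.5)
Your proposal is correct and matches the paper's proof: the paper likewise notes that there are $O(P^{k/2}/\sqrt X)$ values of $r$ with $\sum_{r} r\ll P^k/X$, and it sums each error term of Proposition~\ref{prop:hyperbolic-one-r}. Your bookkeeping is right, including the main-term normalization, all five terms of $\mathcal E_{\mathrm{hyp}}$, and the term $P^{k-2}/X$.
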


\begin{proof}
There are $O(P^{k/2}/\sqrt X)$ values of $r$, and
\[
\sum_{r\leq 2P^{k/2}/\sqrt X}r
\ll\frac{P^k}{X}.
\]
The assertion follows by summing each error term in Proposition~\ref{prop:hyperbolic-one-r}.
\end{proof}

We finally pass from the common interior range to the full range in the trace formula.

\begin{lemma}\label{lem:hyperbolic-boundary}
Let
\[
R_1=\frac{2P^{k/2}}{\sqrt X}.
\]
The contribution of
\[
R_0\leq r\leq\frac{2P^{k/2}}{\sqrt N},
\qquad
N\in[X,X+Y],
\]
together with the extension of the main term in Corollary~\ref{cor:hyperbolic-interior} from $R_0$ to $R_1$, after
multiplication by $\zeta(2)\pi/(XY)$, is
\[
O_{k,\varepsilon}\left(
(P^kX)^\varepsilon
\left(
\frac{P^kY^{3/2}}{X^{5/2}}
+
\frac{P^{k/2}Y^{1/2}}{X}
\right)
\right).
\]
\end{lemma}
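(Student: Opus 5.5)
The lemma has two parts: the boundary part of the trace-formula sum itself, and the extension of the main term from $R_0$ to $R_1$. I will bound them separately, using only trivial (pointwise) class-number bounds together with the small size of the boundary range.

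\emph{Size of the boundary range.} First I record how narrow the range $R_0\leq r\leq R_1$ is. Since $R_1-R_0=2P^{k/2}\bigl(X^{-1/2}-(X+Y)^{-1/2}\bigr)\ll P^{k/2}YX^{-3/2}$, the number of integers $r$ in this range is
\[
\ll 1+\frac{P^{k/2}Y}{X^{3/2}}.
\]
For a given $N$, an integer $r$ contributes to the trace formula only if $r\leq 2P^{k/2}/\sqrt N$. In that case the discriminant $|r^2N^2-4P^kN|=N(4P^k-r^2N)$ is small. For $r\geq R_0$ we have $4P^k-r^2N\leq 4P^k(1-N/(X+Y))\ll P^kY/X$, hence
\[
|r^2N^2-4P^kN|\ll P^kY.
\]

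\emph{The boundary part of the sum.} I insert the Hurwitz bound $H_1(-D)\ll_\varepsilon D^{1/2+\varepsilon}$, which follows from the relation to Gauss class numbers and the divisor bound. This gives $H_1(r^2N^2-4P^kN)\ll (P^kX)^\varepsilon (P^kY)^{1/2}$. Summing over at most $Y+1$ values of $N$ and over the $r$ in the boundary range gives
\[
\ll (P^kX)^\varepsilon\, Y\,(P^kY)^{1/2}\left(1+\frac{P^{k/2}Y}{X^{3/2}}\right).
\]
After multiplying by $\zeta(2)\pi/(XY)$, this becomes
\[
(P^kX)^\varepsilon\left(\frac{P^{k/2}Y^{1/2}}{X}+\frac{P^kY^{3/2}}{X^{5/2}}\right),
\]
which is exactly the claimed bound.

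\emph{Extending the main term.} The main term is $B\sum_r C(r)\sqrt{4P^k/X-r^2}$. For $R_0\leq r\leq R_1$ we have $4P^k/X-r^2\leq R_1^2-R_0^2\ll P^kY/X^2$, so each such term is $O(P^{k/2}Y^{1/2}/X)$, because $C(r)$ is bounded. Multiplying by the number of terms, $1+P^{k/2}Y/X^{3/2}$, gives the same two error terms.

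\emph{Main obstacle.} The only care needed is near the edge, where the argument of $H_1$ may vanish or be a perfect-square-type degenerate discriminant. When $4P^k=r^2N$ the discriminant is $0$, but this is impossible since $P\nmid N$ and $N$ is squarefree with $N>4$. Degenerate discriminants contribute $O(1)$ each, which is absorbed in the pointwise bound. Beyond this, the parity and admissibility bookkeeping of Lemma~\ref{lem:admissible-classes} is not needed, since only upper bounds are required. The restriction to squarefree $N$ with $P\nmid N$ only decreases the sums, so it can be dropped.
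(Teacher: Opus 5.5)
Your proposal is correct and follows the paper's proof essentially step for step. Both arguments combine three ingredients: the pointwise bound $H_1(-D)\ll_\varepsilon D^{1/2+\varepsilon}$ on boundary discriminants of size $\ll P^kY$; the count $\ll 1+P^{k/2}Y/X^{3/2}$ of integers $r$ in $[R_0,R_1]$, where the extra $1$ produces the second error term; and the bound $\sqrt{4P^k/X-r^2}\ll P^{k/2}Y^{1/2}/X$, which handles the extension of the main term.
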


\begin{proof}
In the boundary range,
\[
4P^kN-r^2N^2\ll P^kY.
\]
The bound $H_1(-D)\ll_\varepsilon D^{1/2+\varepsilon}$, together with
\[
R_1-R_0\ll\frac{P^{k/2}Y}{X^{3/2}},
\]
gives the stated estimate, and the additional $1$ in the number of integers in the interval gives the second term.
The same argument, using
\[
\sqrt{\frac{4P^k}{X}-r^2}
\ll\frac{P^{k/2}\sqrt Y}{X},
\]
handles the corresponding extension of the main term.
\end{proof}

\subsection{Completion of the proof}

We first record the remaining terms in Proposition~\ref{prop:trace-formula}.
If $P\mid N$, then the local representation is Steinberg and
\[
P^{k/2}\lambda_{\sym^k f}(P)=a_f(P)^k,
\qquad
a_f(P)\in\{\pm1\}.
\]
Since $\dim S_{\mathrm{new}}(N)\ll N$, the contribution of these levels, after multiplication by $\zeta(2)\pi/(XY)$, is
\begin{equation}\label{eq:ramified-levels}
O_k\left(\frac1P+\frac1Y\right).
\end{equation}

Moreover,
\begin{equation}\label{eq:elementary-term}
\frac{\zeta(2)\pi}{XY}
\sideset{}{'}\sum_{\substack{N\in[X,X+Y]\\P\nmid N}}
\sum_{j=0}^kP^j
=
\pi y
+
O_k\left(
\frac{y}{P}
+
\frac{P^k}{Y\sqrt X}
+
\frac1X
\right).
\end{equation}
Indeed, this follows from
\[
\sum_{N\leq Z}\mu^2(N)=\frac{Z}{\zeta(2)}+O(\sqrt Z)
\]
and from deleting the multiples of $P$.

We now verify the errors.
The five terms in $\mathcal E_{\mathrm{hyp}}$ have respective exponents
\[
-\frac15+\frac{11}{10}\delta_2+\frac25\delta_1,\quad
\delta_2-\delta_1,\quad
\delta_2-\frac{\delta_1}{2},\quad
\delta_2+\frac{13}{18}\delta_1-\frac29,\quad
\delta_2+\frac19\delta_1-\frac19.
\]
Under
\[
0<2\delta_2<\delta_1<\frac18,
\]
adding $\eta$ to these five exponents gives, respectively,
\[
-\frac15+\frac1{10}\delta_2+\frac9{10}\delta_1,\quad
-\frac{\delta_1}{2},\quad
0,\quad
-\frac29+\frac{11}{9}\delta_1,\quad
-\frac19+\frac{11}{18}\delta_1.
\]
The first expression is negative because $\delta_2<\delta_1/2$ and $\delta_1<1/8$; the final two are negative because $\delta_1<1/8$.
Thus each original exponent is at most $-\eta$, with equality only for the third one.
The errors in Lemma~\ref{lem:hyperbolic-boundary} have exponents at most
\[
\delta_2-\frac32\delta_1
\qquad\text{and}\qquad
\frac{\delta_2-\delta_1}{2},
\]
and adding $\eta$ gives $-\delta_1$ and $-\delta_2/2$, respectively.
They are therefore smaller.

We spell out the parabolic errors as well.  In the even case, the exponent of the second error inside the parentheses in Proposition~\ref{prop:parabolic-average} is at most
\[
-\frac14+\frac1{6k}
+\left(\frac12+\frac1{6k}\right)\delta_2
+\frac56\delta_1.
\]
After adding $\eta$, this is at most
\[
-\frac16+\frac43\delta_1<0,
\]
where we used $k\geq2$ and discarded a negative multiple of $\delta_2$.
In the odd case $k\geq3$, the corresponding exponent, after adding $\eta$, is at most
\[
-\frac6{19}+\frac{51}{38}\delta_1<0.
\]
The remaining analytic parabolic error has exponent $\delta_2/2-\delta_1$, which is also smaller than $-\eta$.
The first parabolic error inside the parentheses is
\[
(PXY)^\varepsilon\frac{P^{k/2-1}}{\sqrt X}
=(PXY)^\varepsilon\frac{\sqrt y}{P}.
\]
The growth conditions give $PXY\ll X^C$ for a fixed $C\geq1$.
Applying the proposition with $\varepsilon/C$ and relabeling the exponent bounds this term by
$O_{k,\varepsilon}(X^\varepsilon\sqrt y/P)$.
Finally,
\[
\frac{P^k}{Y\sqrt X}
\ll X^{-1/2+\delta_1+\delta_2}
\ll X^{-\eta}.
\]
Also $X^{-1}$ and $Y^{-1}$ are $O(X^{-\eta})$, since $\eta<\delta_1/2<1/16$.
The exceptional hyperbolic local term is $P^{k-2}/X=y/P^2$.
Together with $P^{-1}$ and $y/P$, it is covered by $(1+y)/P$.
Consequently, Proposition~\ref{prop:trace-formula}, Proposition~\ref{prop:parabolic-average}, Corollary~\ref{cor:hyperbolic-interior}, Lemma~\ref{lem:hyperbolic-boundary}, \eqref{eq:ramified-levels}, and \eqref{eq:elementary-term} give
\begin{multline}\label{eq:numerator-asymptotic}
\frac{\zeta(2)\pi}{XY}
\sideset{}{'}\sum_{N\in[X,X+Y]}
\sum_{f\in H_{\mathrm{new}}(N)}
P^{k/2}\lambda_{\sym^k f}(P)\varepsilon(f)
\\
=
A\sqrt y
+
B\sum_{1\leq r\leq2\sqrt y}
C(r)\sqrt{4y-r^2}
-\pi y
+
O_{k,\varepsilon}\left(
X^{-\eta+\varepsilon}
+X^\varepsilon\frac{\sqrt y}{P}
+\frac{1+y}{P}
\right).
\end{multline}

It remains to divide by the size of the family.
By the dimension estimate used in~\cite[\S3.4]{Zubrilina2025},
\begin{equation*}
\sideset{}{'}\sum_{N\in[X,X+Y]}
\sum_{f\in H_{\mathrm{new}}(N)}1
=
\frac{XY}{12\zeta(2)}D_0
+
O_\varepsilon\left(
YX^\varepsilon+X^{8/5+\varepsilon}+Y^2
\right).
\end{equation*}
Since $C(r)\ll_\varepsilon r^\varepsilon$, the main term in~\eqref{eq:numerator-asymptotic} is $O_\varepsilon(1+y^{1+\varepsilon})$.
The three relative errors in the dimension estimate, after multiplication by this quantity, have exponents at most
\[
-1+\delta_2,\qquad
-\frac25+\delta_1+\delta_2,\qquad
-\delta_1+\delta_2.
\]
They are all smaller than $-\eta$: for the middle one this follows from $3\delta_1/2<3/16<2/5$, and the other two are immediate.
Dividing~\eqref{eq:numerator-asymptotic} by the dimension estimate proves Theorem~\ref{thm:scaled-murmuration}.
Finally, the additional hypothesis $(k-1)\delta_2<1$ gives
\[
\frac yP=\frac{P^{k-1}}X
\ll X^{-c_k},
\qquad
c_k=\frac{1-(k-1)\delta_2}{k}>0.
\]
The growth condition for $P^k$ also gives
\[
\frac{\sqrt y}{P}
\ll X^{-d_k},
\qquad
d_k=\frac{2-(k-2)\delta_2}{2k}>0.
\]
Together with $P\to\infty$, these estimates show, after choosing $0<\varepsilon<\min\{\eta,c_k,d_k\}$, that the error term in the theorem tends to zero.



\section{Data and experimental design}
\label{secDesign}

\subsection{Elliptic curve data}

We use a frozen snapshot of $73\,069$ records constructed from the class-level ecdata tables~\cite{CremonaEcdata}.  Each record contains a conductor, an isogeny-class label, the rank recorded in those tables, a root number, and the first $1000$ prime coefficients.
The acquisition program reads one representative per isogeny class with conductor at most $100\,000$.
It samples $20\,000$ classes from each recorded rank $0$, $1$, and $2$, using seeds $2026$, $2027$, and $2028$, and takes the union with the two conductor-window populations used for Figure~\ref{figBasic}.
After duplicate classes are removed, this construction gives the $73\,069$-record snapshot and the binary indicators used for the model and plotting populations.
PARI supplies the root numbers and prime coefficients.
The supplied acquisition program uses the tagged ecdata release dated April 22, 2026.
The original acquisition run log was not retained, so the frozen snapshot remains the canonical input for every numerical value reported below.
The isogeny-class labels are unique in that snapshot, and the main experiment focuses on the three recorded ranks.

The unit of analysis is therefore an isogeny class representative.
The splitting code applies a target-stratified row assignment after the uniqueness audit.
Since each isogeny-class label occurs once, every class representative belongs to a single assignment part and isogeny leakage cannot occur.

The five prediction tasks are listed in Table~\ref{tabTasks}.
Each task is balanced before splitting.  In the rank-balanced population, the two root signs occur with counts $20\,000$ and $40\,000$.
The root task samples $20\,000$ from each sign.
Its target population is therefore a designed, class-balanced sample drawn from ranks $0$, $1$, and $2$ with conductors at most $100\,000$; natural database frequencies and higher ranks lie outside the scope of the reported accuracy.

\begin{table}[t]
\centering
\caption{Prediction tasks and test sizes.}
\label{tabTasks}
\begin{tabular}{lrrr}
\toprule
Task & Curves per class & Test size & Chance balanced accuracy\\
\midrule
Rank $0$ versus $1$ & 20000 & 8000 & $1/2$\\
Rank $0$ versus $2$ & 20000 & 8000 & $1/2$\\
Rank $1$ versus $2$ & 20000 & 8000 & $1/2$\\
Rank $0,1,2$ & 20000 & 12000 & $1/3$\\
Root number & 20000 & 8000 & $1/2$\\
\bottomrule
\end{tabular}
\end{table}

For a binary task, the data contain $40\,000$ curves.  A complete split has $25\,600$ training observations, $6\,400$ validation observations, and $8\,000$ test observations.
For the three-class task, the corresponding training, validation, and test counts are $38\,400$, $9\,600$, and $12\,000$.
Splits are stratified by the target label after the isogeny-class uniqueness check.
The same split is used for every feature panel in a comparison.

Root number agrees with rank parity throughout the model population, so the two labels encode overlapping empirical information in this data set.

Before fitting, we verify that the coefficient labels are exactly the first $1000$ primes in increasing order and that the largest one is $7919$.
On the first recorded curve and its first eight good primes, the normalized floating-point recurrence agrees with the scaled integer recurrence to the stored numerical tolerance.
The observed normalized values respect the good-prime bounds $|H_1|\leq2$, $|H_2|\leq3$, and $|H_3|\leq4$.
All model matrices contain finite values after imputation.

The finite elliptic-curve sample differs from the family in Theorem~\ref{thm:scaled-murmuration}.
The theorem averages all weight $2$ newforms of squarefree levels in a short level window.
The experiment uses elliptic-curve isogeny-class representatives with general conductors.
The theorem supplies structural motivation for the effective index, and Figure~\ref{figUniversal} provides a finite-data comparison whose scope excludes numerical verification of the asymptotic error term.

\subsection{Coefficient budget and missing values}

The first $1000$ primes end at $q=7919$.  The quadratic block uses the $23$ primes $p\leq83$, for which $p^2\leq q$.
The cubic block uses the $8$ primes $p\leq19$, for which $p^3\leq q$.
Table~\ref{tabBudget} gives the four main panels.

\begin{table}[t]
\centering
\caption{Feature panels under the common raw coefficient budget.}
\label{tabBudget}
\begin{tabular}{lrrrr}
\toprule
Panel & Raw queries & $H_2$ columns & $H_3$ columns & Total columns\\
\midrule
$X_1$ & 1000 & 0 & 0 & 1000\\
$X_1+H_2$ & 1000 & 23 & 0 & 1023\\
$X_1+H_3$ & 1000 & 0 & 8 & 1008\\
$X_1+H_2+H_3$ & 1000 & 23 & 8 & 1031\\
\bottomrule
\end{tabular}
\end{table}

At a prime of bad reduction we mark the corresponding normalized coordinate as missing.
The power coordinate is formed before imputation and has the same missing pattern.
The feature panels omit missing-value indicators.
The mean imputer and the standard scaler are fitted on the training set and then applied to validation and test sets.

This convention differs from the theoretical convention in Section~\ref{sec:scaled-murmuration}.
The theorem uses the coefficient of the ramified local symmetric power Euler factor.
The experiment marks that location as missing so that every observed power coordinate is governed by the displayed good prime Chebyshev formula.
Imputation then occurs only inside the training pipeline.

\subsection{The theorem and the finite experiment}

Table~\ref{tabTheoryExperiment} summarizes the distinct scopes of the arithmetic theorem and the empirical analysis.
The theorem identifies the effective coordinate and the common signed profile in a family that is accessible to the trace formula.
The experiment evaluates the resulting representation on a finite collection of rational elliptic curves.

\begin{table}[t]
\centering
\caption{The theoretical family and the finite data experiment.}
\label{tabTheoryExperiment}
\resizebox{\textwidth}{!}{
\begin{tabular}{lll}
\toprule
Aspect & Theorem~\ref{thm:scaled-murmuration} & Elliptic curve experiment\\
\midrule
Objects & All normalized weight $2$ newforms & Rational isogeny class representatives\\
Conductor condition & Squarefree levels & General elliptic curve conductors\\
Window & $Y$ asymptotically shorter than $X$ & Fixed windows for plots\\
Bad primes & Ramified local Euler factor & Missing value before imputation\\
Horizontal scale & $P^k/X$ with lower endpoint $X$ & $p^k/8750$ in the main plot\\
Conclusion & Root-number-weighted family mean & Held-out prediction and descriptive means\\
\bottomrule
\end{tabular}}
\end{table}

The theorem concerns asymptotic arithmetic averages, and the empirical analysis concerns finite-sample prediction accuracy.
The link between the two parts is the same effective index $p^k$ and the same root sign organization of the coefficient means.

\subsection{Model selection and controls}

The classifier is regularized logistic regression fitted by stochastic gradient descent.
The regularization parameter is selected on the validation set from
\begin{equation*}
\{3\mathord{\times}10^{-5},\,10^{-4},\,3\mathord{\times}10^{-4}\}.
\end{equation*}
Selection uses only the validation set.  The pipeline is then refitted on the union of the $25\,600$ fitting observations and the $6\,400$ validation observations.
For a binary task, preprocessing therefore uses $32\,000$ curves; the SGD classifier internally reserves $10$ percent of these rows for early stopping.
The refitted pipeline estimates imputation means and scales on all $32\,000$ rows before the classifier creates this internal holdout.
Section~\ref{secAdditionalControls} supplies a deterministic logistic fit with no internal early-stopping split.
We repeat the complete stratified assignment five times for the root-number task and the rank $0$ versus rank $1$ task.
All five assignments use the same initially sampled balanced population.
Their test sets overlap, and the split seed also sets the optimizer seed, so the observed spread combines assignment and optimization variation.

The fixed-split $95$ percent uncertainty interval uses the $2.5$th and $97.5$th percentiles of $1000$ class-stratified paired resamples of the common test observations.
It quantifies test-sampling variation conditional on the fitted models and chosen split; split-to-split and optimization variation lie outside its scope.

The initial analysis uses three controls.
A model using only the conductor checks whether sampling alone explains the result.
A label permutation checks the chance level.
A row-shuffled feature control preserves the size and marginal distribution of the added block after breaking its curve-level alignment.

The primary empirical comparison is root-number prediction with the full panel against $X_1$ on the same test curves.
Rank $0$ versus rank $1$ is the key secondary task.  The remaining rank tasks, individual blocks, quantity points, and the scan over $k=2,\ldots,12$ are exploratory.
The pointwise intervals are unadjusted for multiplicity across the exploratory comparisons.

\section{Experimental results}
\label{secResults}

\subsection{The ordinary prime murmuration}

We first reproduce the basic experiment of He, Lee, Oliver, and Pozdnyakov~\cite{HeLeeOliverPozdnyakov2025}.

\Needspace{0.3\textheight}

\medskip

\begin{figure}[H]
\centering
\includegraphics[width=0.96\textwidth]
{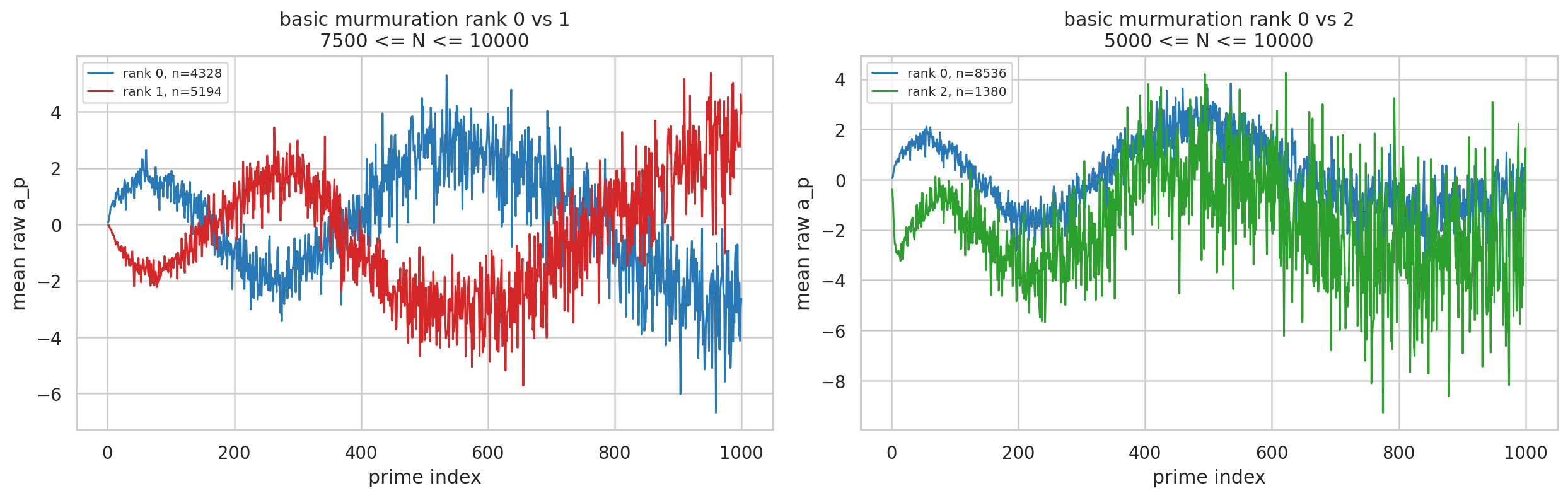}
\caption{Basic murmuration profiles from ordinary prime coefficients.}
\label{figBasic}
\end{figure}

\medskip

\noindent

Figure~\ref{figBasic} displays the mean raw coefficients $a_p$ ordered by prime index.  The left panel compares curves of ranks $0$ and $1$ with conductors in the interval $7500\leq N\leq10000$.
The right panel compares curves of ranks $0$ and $2$ with conductors in the interval $5000\leq N\leq10000$.  The sample sizes are displayed in the
two panels.
The rank $0$ and rank $1$ mean-coefficient sequences have Pearson correlation $-0.7445$, and their signs are opposite at $85.8$ percent of the $1000$ prime positions.
Their opposite waves provide a clear positive control for the later experiments.
The comparison between ranks $0$ and $2$ is noisier because the rank $2$ sample is much smaller.

\subsection{A common effective index profile}

We next place $H_k(x_p)$ at the effective position $p^k/8750$.
The vertical coordinate in Figure~\ref{figUniversal} is the balanced root-sign contrast
\begin{equation*}
\sqrt{8750}\,
\frac{\mu_+(p^k)-\mu_-(p^k)}{2}.
\end{equation*}
Here $\mu_+$ and $\mu_-$ are the conditional means for the two root signs in the conductor window from $7500$ to $10000$.
Before removal of bad prime entries, the window contains $5099$ curves with positive root sign and $5194$ curves with negative root sign.
The balanced contrast gives the two signs equal weight.
The corresponding theoretical statistic is the signed mean
\begin{equation*}
\mathbb E[\varepsilon H_k]
=
\pi_+\mu_+-\pi_-\mu_-.
\end{equation*}
These quantities agree when $\pi_+=\pi_-=1/2$.  Their difference in the displayed window is a small class proportion correction.
The available counts vary with $p$ because bad prime entries are missing.

The denominator $8750$ is the midpoint of the displayed conductor window.
It gives a convenient common scale for a finite descriptive plot.
Theorem~\ref{thm:scaled-murmuration} uses the lower endpoint $X$ of a shorter asymptotic window.
The two horizontal conventions serve different purposes: the midpoint gives a convenient finite-window scale, and the theorem uses the lower endpoint of an asymptotically short window.

The $H_2$ and $H_3$ points follow the same visible oscillation as the ordinary prime curve.
A descriptive interpolation calculation gives Pearson correlation $0.906$ for $H_2$ and $0.960$ for $H_3$.
The signs agree with the interpolated ordinary prime curve at $21$ of $23$ quadratic positions and all $8$ cubic positions.
Pointwise intervals exclude zero at $18$ quadratic positions and $6$ cubic positions.

Table~\ref{tabProfileAlignment} records the descriptive alignment.
The ordinary prime curve is linearly interpolated in the effective coordinate and then evaluated at the quadratic or cubic nodes.
The slope is obtained by an ordinary least squares fit with an intercept, using the power contrast as the response and the interpolated ordinary prime contrast as the explanatory variable.

\begin{table}[t]
\centering
\caption{Descriptive alignment with the ordinary prime profile.}
\label{tabProfileAlignment}
\begin{tabular}{lrrrrr}
\toprule
Block & Nodes & Correlation & Fitted slope & Sign agreement & Intervals excluding zero\\
\midrule
$H_2$ & 23 & 0.906 & 1.046 & $21/23$ & $18/23$\\
$H_3$ & 8 & 0.960 & 0.941 & $8/8$ & $6/8$\\
\bottomrule
\end{tabular}
\end{table}

\begin{figure}[t]
\centering
\includegraphics[width=\textwidth]{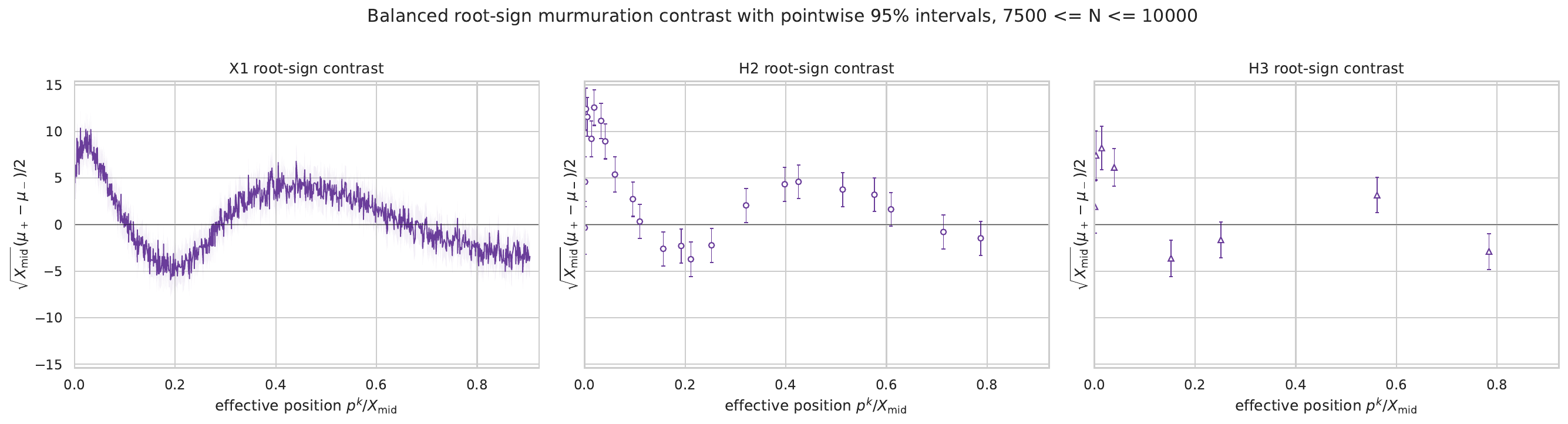}
\caption{Balanced root-sign contrasts for ordinary prime coefficients and the coordinates $H_2$ and $H_3$ obtained from Hecke relations in $7500\leq N\leq10000$.
The horizontal coordinate is the effective position $p^k/8750$.
Error bars are pointwise $95$ percent normal intervals based on standard errors.
The second and third panels sample the same oscillatory pattern as the first panel at the available prime-power locations.}
\label{figUniversal}
\end{figure}

These descriptive comparisons cover one conductor window, $23$ quadratic positions, and $8$ cubic positions.
The positions are strongly dependent, and the intervals are pointwise.
Figure~\ref{figUniversal} provides finite-sample evidence consistent with Conjecture~\ref{conjUniversal}.

Figure~\ref{figPowerCompactness} extends the same balanced root-sign calculation to the first $200$ base primes and every degree $1\leq k\leq12$.
The common base-prime index makes the amplitudes of equal-length transformed panels directly comparable.
The common effective-position display in Figure~\ref{figUniversal} records the conductor-scaled profile.

\begin{figure}[p]
\centering
\includegraphics[width=0.90\textwidth]{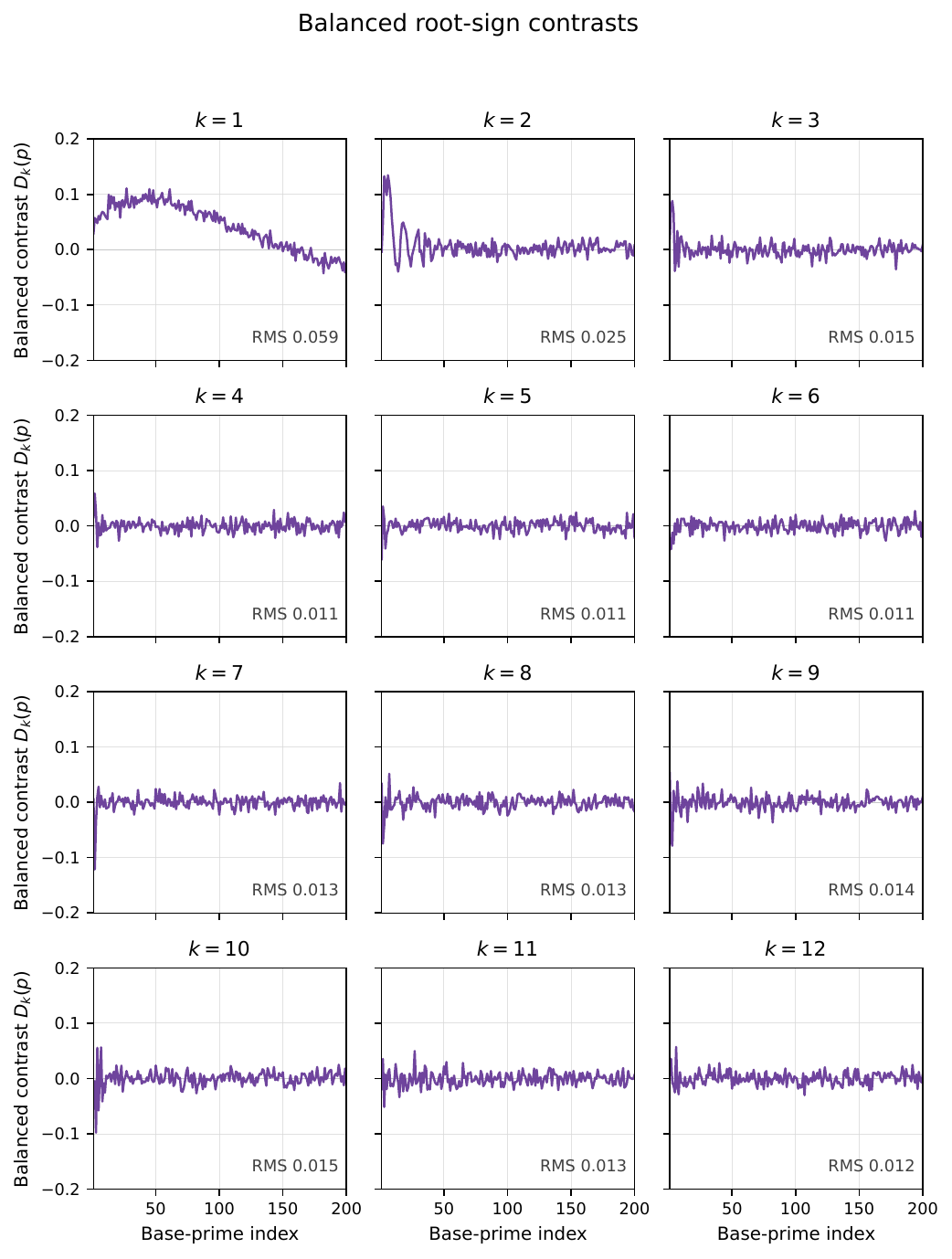}
\caption{Balanced root-sign contrasts for $k=1,\ldots,12$ in $7500\leq N\leq10000$.
At each of the first $200$ base primes,
$D_k(p)=\{\mu_{k,+}(p)-\mu_{k,-}(p)\}/2$, where $\mu_{k,\pm}(p)$ is the mean of $H_k(x_p)=a_{p^k}/p^{k/2}$ over curves with root sign $\pm1$ after ramified entries are omitted.
Every panel uses the same vertical scale.
The displayed value is $R_k=(200^{-1}\sum_{j=1}^{200}D_k(p_j)^2)^{1/2}$.}
\label{figPowerCompactness}
\end{figure}

On this common base-prime display, the higher-power contrasts form a visibly tighter band around zero.
The RMS falls from $0.0588$ at $k=1$ to $0.0250$ at $k=2$ and lies between $0.0110$ and $0.0147$ for $3\leq k\leq12$.
These values summarize a finite-window, equal-base-prime diagnostic.
The prime positions are dependent, so the RMS values are descriptive summaries.

\subsection{Prediction accuracy and error reduction}

Table~\ref{tabFixed} gives the main fixed-split results.
The change column is measured in percentage points.
Its interval is the pointwise paired bootstrap interval.

\begin{table}[t]
\centering
\caption{Prediction on the fixed test split.}
\label{tabFixed}
\resizebox{\textwidth}{!}{
\begin{tabular}{llrrr}
\toprule
Task & Panel & Balanced accuracy & Change with interval & Error reduction\\
\midrule
Root number & $X_1$ & $71.1875\%$ & & \\
 & $X_1+H_2$ & $71.9125\%$ & $0.7250\ [0.0122,1.4625]$ & $2.52\%$\\
 & $X_1+H_3$ & $71.2750\%$ & $0.0875\ [-0.4625,0.6500]$ & $0.30\%$\\
 & $X_1+H_2+H_3$ & $72.1000\%$ & $0.9125\ [0.2250,1.5628]$ & $3.17\%$\\
\midrule
Rank $0$ versus $1$ & $X_1$ & $96.0375\%$ & & \\
 & $X_1+H_2$ & $96.4250\%$ & $0.3875\ [0.0875,0.7000]$ & $9.78\%$\\
 & $X_1+H_3$ & $96.1750\%$ & $0.1375\ [-0.1250,0.4125]$ & $3.47\%$\\
 & $X_1+H_2+H_3$ & $96.6125\%$ & $0.5750\ [0.3000,0.8750]$ & $14.51\%$\\
\bottomrule
\end{tabular}}
\end{table}

For completeness, Table~\ref{tabAllTasks} reports every main task.
The comparison of ranks $0$ and $2$ is especially informative because both classes have the same root sign; it therefore isolates rank information beyond parity.

\begin{table}[t]
\centering
\caption{Balanced accuracy in percent for every main task.}
\label{tabAllTasks}
\begin{tabular}{lrrrr}
\toprule
Task & $X_1$ & $X_1+H_2$ & $X_1+H_3$ & Full panel\\
\midrule
Rank $0,1,2$ & 95.5750 & 95.1417 & 95.8333 & 94.6833\\
Rank $0$ versus $1$ & 96.0375 & 96.4250 & 96.1750 & 96.6125\\
Rank $0$ versus $2$ & 99.6875 & 99.6250 & 99.6000 & 99.6250\\
Rank $1$ versus $2$ & 99.7125 & 99.7000 & 99.7000 & 99.8625\\
Root number & 71.1875 & 71.9125 & 71.2750 & 72.1000\\
\bottomrule
\end{tabular}
\end{table}

\begin{figure}[t]
\centering
\includegraphics[width=0.86\textwidth]{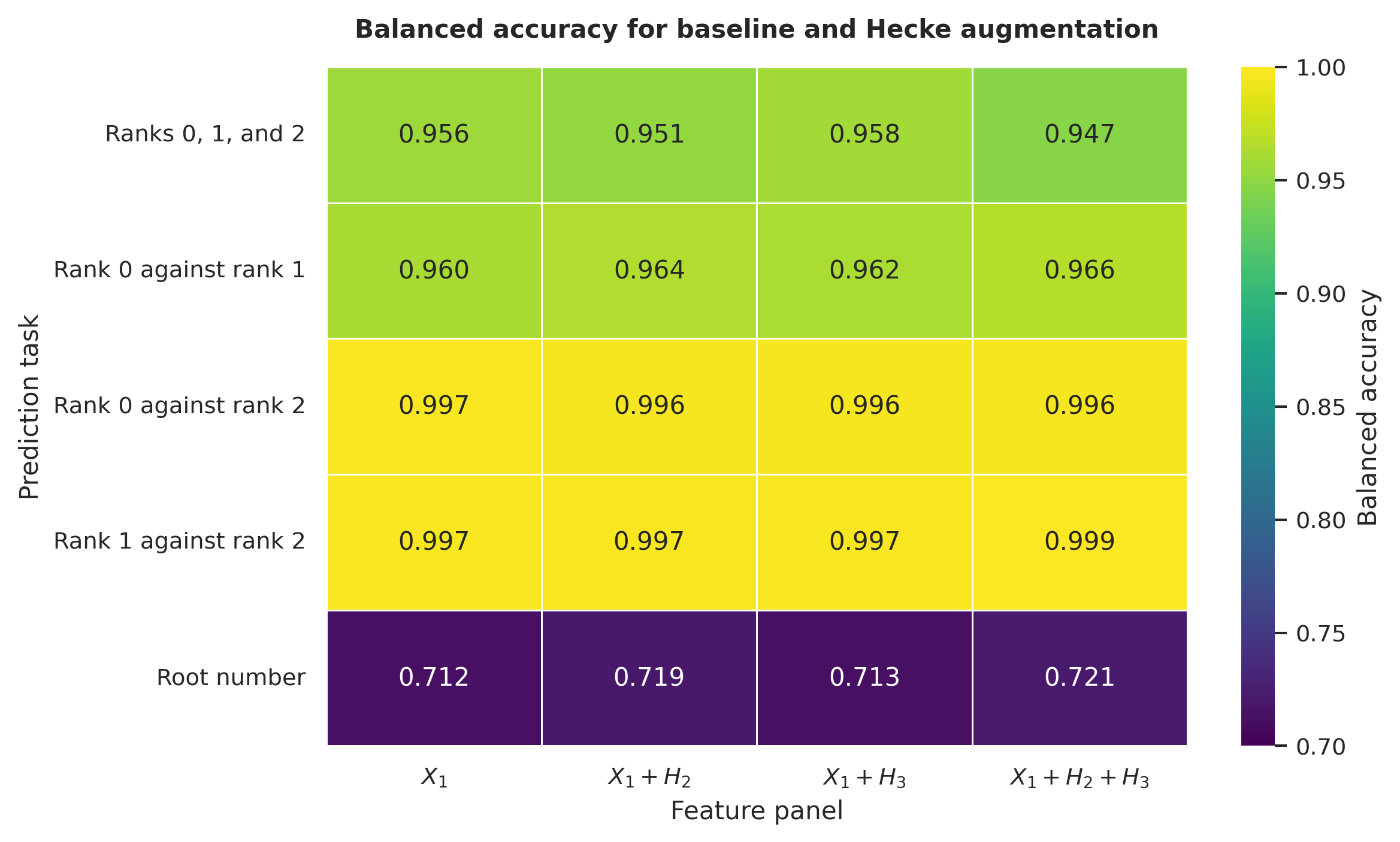}
\caption{Balanced accuracy for the four feature panels and five prediction tasks.
The first three rank rows have a high ordinary prime baseline.
The root-number row leaves more room for a visible absolute gain.}
\label{figAugmentedHeatmap}
\end{figure}

On the root-number test set, the ordinary prime model makes $2305$ errors and the full model makes $2232$.
The $31$ derived columns therefore remove $73$ errors among $8000$ test curves.
This is a $3.17$ percent reduction of the baseline error.
The Matthews correlation coefficient rises from $0.423870$ to $0.442043$.
The area under the receiver operating characteristic curve rises from $0.761864$ to $0.776113$.

For rank $0$ versus rank $1$, the error count falls from $317$ to $271$.
This is a reduction of $46$ errors and a $14.51$ percent relative error reduction.
The Matthews correlation coefficient rises from $0.921131$ to $0.932559$.

The remaining tasks illustrate the complementary roles of baseline error and ceiling effects.
The rank $0$ versus rank $2$ model makes $25$ errors with $X_1$ and $30$ with the full panel.
For rank $1$ versus rank $2$, the error count falls from $23$ to $11$ at an accuracy already near the ceiling.
In the three-class task, it rises from $531$ to $638$, an increase of $107$ errors.
Thus the effect of augmentation varies across rank tasks.

\begin{figure}[t]
\centering
\includegraphics[width=0.94\textwidth]{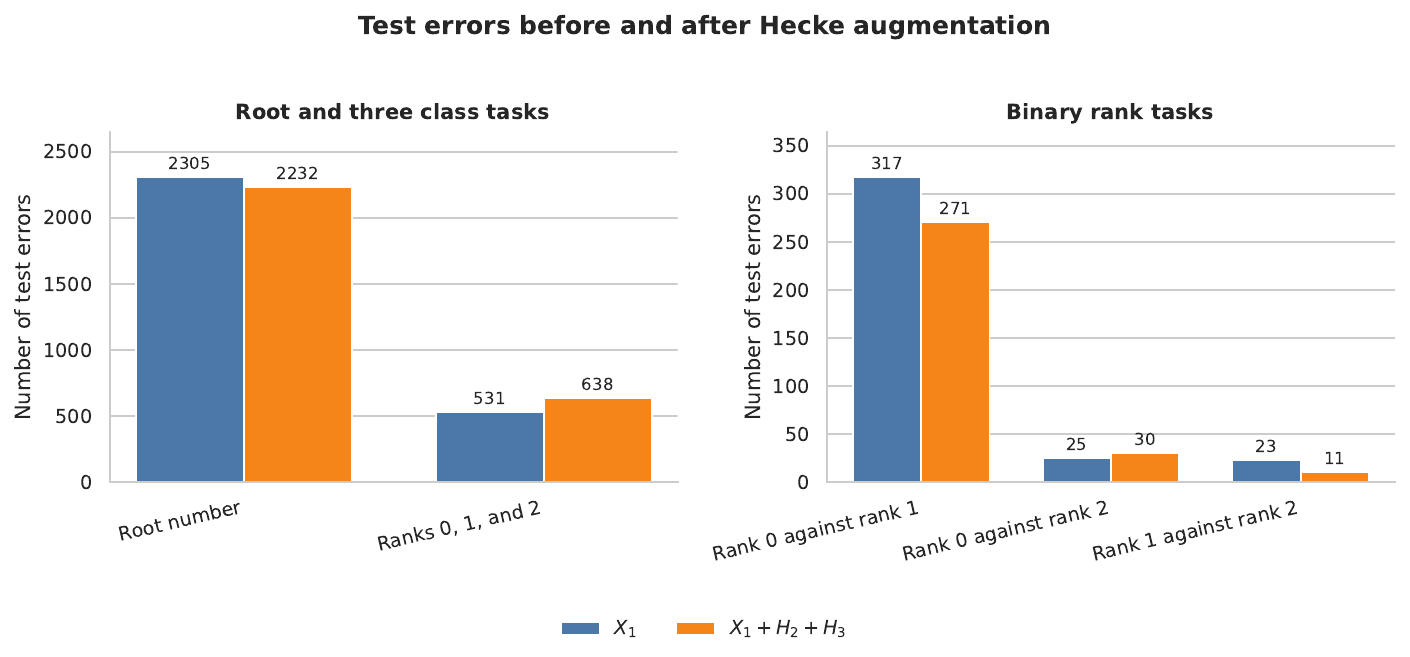}
\caption{Test error counts for the ordinary prime panel and the full augmented panel.
The root number and rank $0$ versus rank $1$ tasks show fewer errors after augmentation.
The three-class task and rank $0$ versus rank $2$ show the task dependence of the result.}
\label{figErrorCounts}
\end{figure}

The assignment stability results appear in Table~\ref{tabStability} and Figure~\ref{figStability}.
Means and standard deviations provide descriptive summaries across five complete fits.

\begin{table}[t]
\centering
\caption{Stability over five stratified assignments after retaining one representative per isogeny class.
Accuracy entries are in percent and change entries are in percentage points.
Parenthetical fractions count the assignments with a positive change.}
\label{tabStability}
\resizebox{\textwidth}{!}{
\begin{tabular}{lrrrrr}
\toprule
Task & $X_1$ accuracy & $H_2$ change & $H_3$ change &
Full change & Full positive assignments\\
\midrule
Root number & $70.9725\pm0.3206$ &
$0.4450\pm0.4169\ (4/5)$ &
$0.4775\pm0.2931\ (5/5)$ &
$0.7425\pm0.2838$ & $5/5$\\
Rank $0$ versus $1$ & $96.2675\pm0.1893$ &
$0.1400\pm0.2011\ (4/5)$ &
$0.1550\pm0.1148\ (5/5)$ &
$0.2900\pm0.2155$ & $4/5$\\
\bottomrule
\end{tabular}}
\end{table}

The full panel improves root-number prediction in all five assignments.
Its mean change is $0.7425$ percentage points.
This corresponds to mean full accuracy $71.7150$ percent and about $59.4$ fewer errors per $8000$ test curves.
The rank comparison improves in four assignments and ties in one.
Its mean change is $0.2900$ percentage points.
Its mean full accuracy is $96.5575$ percent and it corresponds to about $23.2$ fewer errors per $8000$
test curves.
Applying formula~\eqref{eqErrorReduction} to the mean accuracies gives relative error reductions of $2.56$ percent
and $7.77$ percent.

\begin{figure}[t]
\centering
\includegraphics[width=0.94\textwidth]{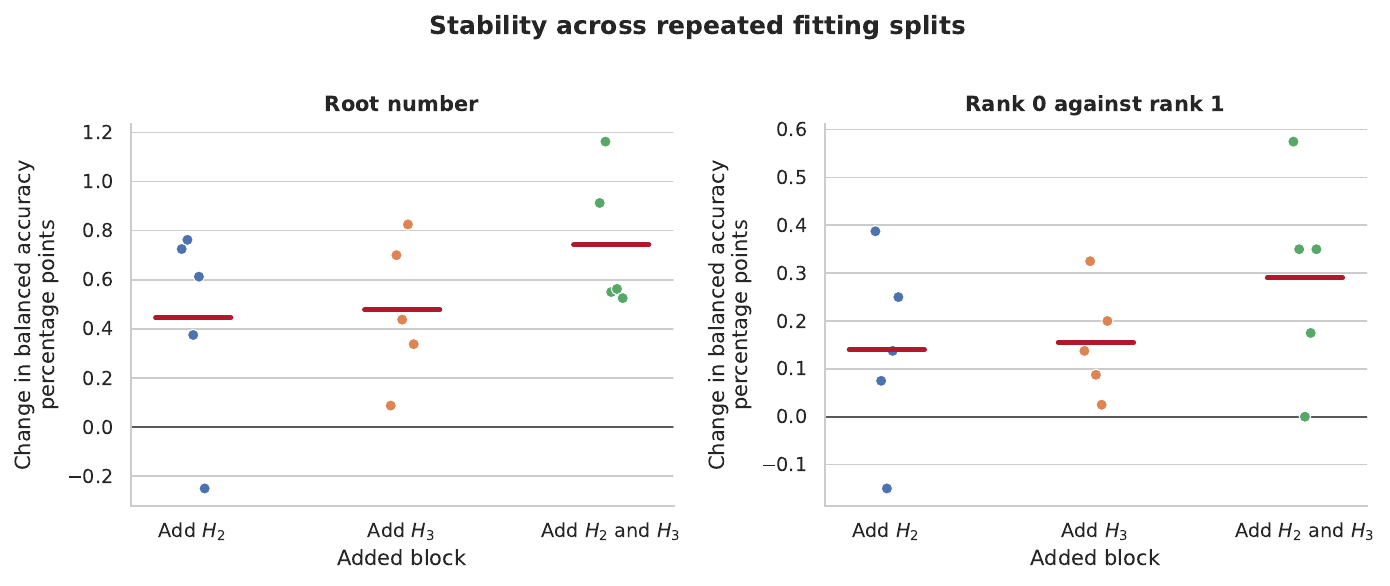}
\caption{Change in balanced accuracy relative to the ordinary prime panel over five stratified assignments from one fixed balanced population.
Each point is one complete fit with regularization selected on validation data.
The red segment is the mean.
The joint $H_2$ and $H_3$ panel improves root-number prediction in all five assignments.}
\label{figStability}
\end{figure}

\FloatBarrier

\subsection{Root number geometry as a model diagnostic}

The root-number classes have a special geometry in this data set.
Ranks $0$ and $2$ share one sign, and rank $1$ has the opposite sign.
If the rank signal is approximately ordered along a low-dimensional direction, one root-sign class occupies two separated regions, a geometry represented poorly by a single logistic hyperplane.

We examine this interaction by fitting the three rank classes and aggregating their fitted probabilities according to parity.
The result appears in Table~\ref{tabParityGeometry}.  This rank-supervised procedure changes both the decision geometry and the supervision available during fitting.

\begin{table}[t]
\centering
\caption{Direct root classification and the rank-then-parity diagnostic on the fixed split.
Accuracy columns are in percent.}
\label{tabParityGeometry}
\begin{tabular}{lrrrr}
\toprule
Panel & Direct accuracy & Direct MCC & Parity accuracy & Parity MCC\\
\midrule
$X_1$ & 71.1875 & 0.4239 & 92.3375 & 0.8495\\
$X_1+H_2$ & 71.9125 & 0.4383 & 93.7750 & 0.8807\\
$X_1+H_3$ & 71.2750 & 0.4256 & 92.1000 & 0.8442\\
$X_1+H_2+H_3$ & 72.1000 & 0.4420 & 94.0000 & 0.8848\\
\bottomrule
\end{tabular}
\end{table}

The large difference between the direct and parity columns is much larger than the augmentation gain.
The comparison leaves the effects of classifier geometry and richer rank supervision entangled.
Within the parity diagnostic, the full panel raises balanced accuracy by $1.6625$ percentage points and reduces the error count from $613$ to $480$.
This result supports the representation within a rank-supervised diagnostic, so its evidential content overlaps with rank prediction.

\FloatBarrier

\subsection{The number of derived coordinates}

We vary the number of added coordinates while keeping the ordinary prime panel fixed.
Natural prefixes add the available $H_2$ and $H_3$ coordinates in increasing effective index.
Random subsets separate the nominal count from the particular low prime positions.
The regularization value is fixed at the value selected for $X_1$.

With the regularization value fixed at the value selected for $X_1$, the root-number gain at the full count $m=31$ is $0.8875$ percentage points.
The paired interval on the fixed test set is
$[0.1747,1.6250]$.
Across five matched row shuffles, the mean difference between the real augmentation and its shuffled control is
$0.9100$ percentage points.
It is positive in all five comparisons.
Across these five controls, breaking curve-feature alignment removes the gain produced by the real $31$-column augmentation.

\begin{figure}[t]
\centering
\includegraphics[width=0.96\textwidth]{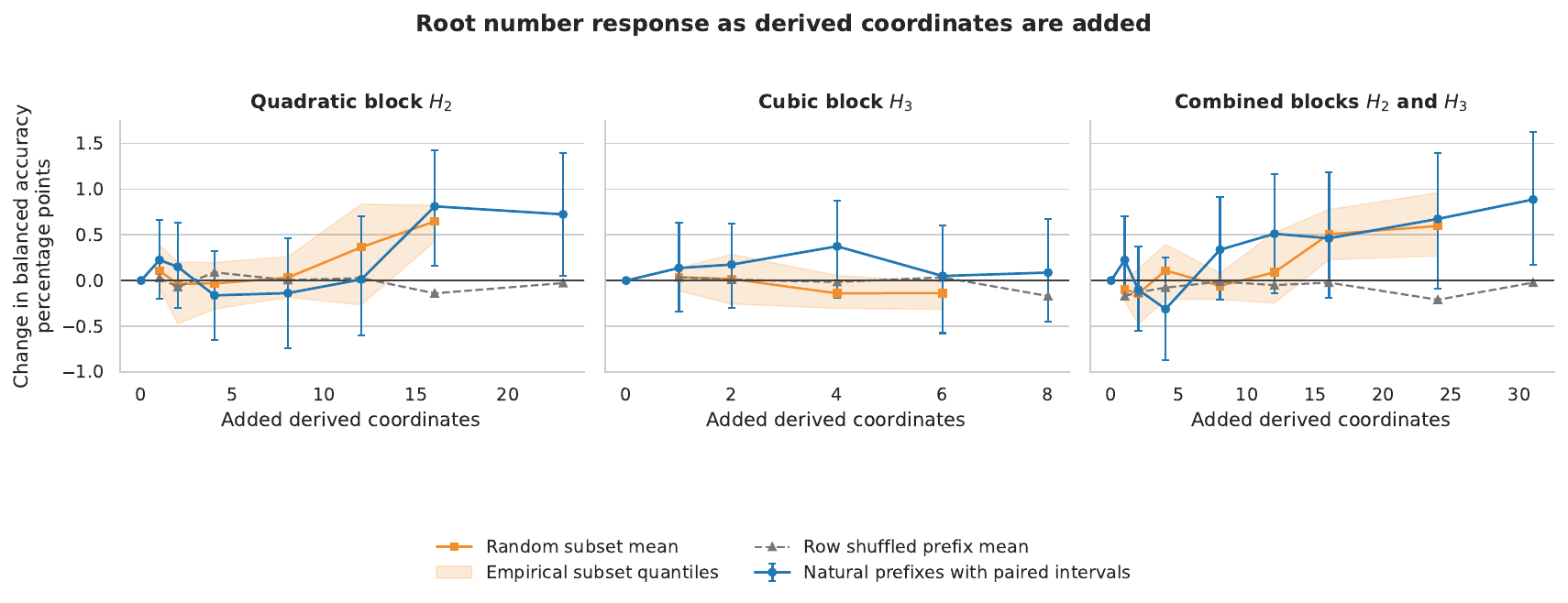}
\caption{Root-number change in balanced accuracy as derived coordinates are added to $X_1$.
Natural prefixes follow effective index order.
Random subsets use only five repetitions at each
displayed size.
The orange bands show the empirical $2.5$th and $97.5$th percentiles from five subsets.
They are descriptive empirical ranges.
The curve is irregular and contains several local reversals.}
\label{figQuantity}
\end{figure}

\begin{figure}[t]
\centering
\includegraphics[width=0.96\textwidth]{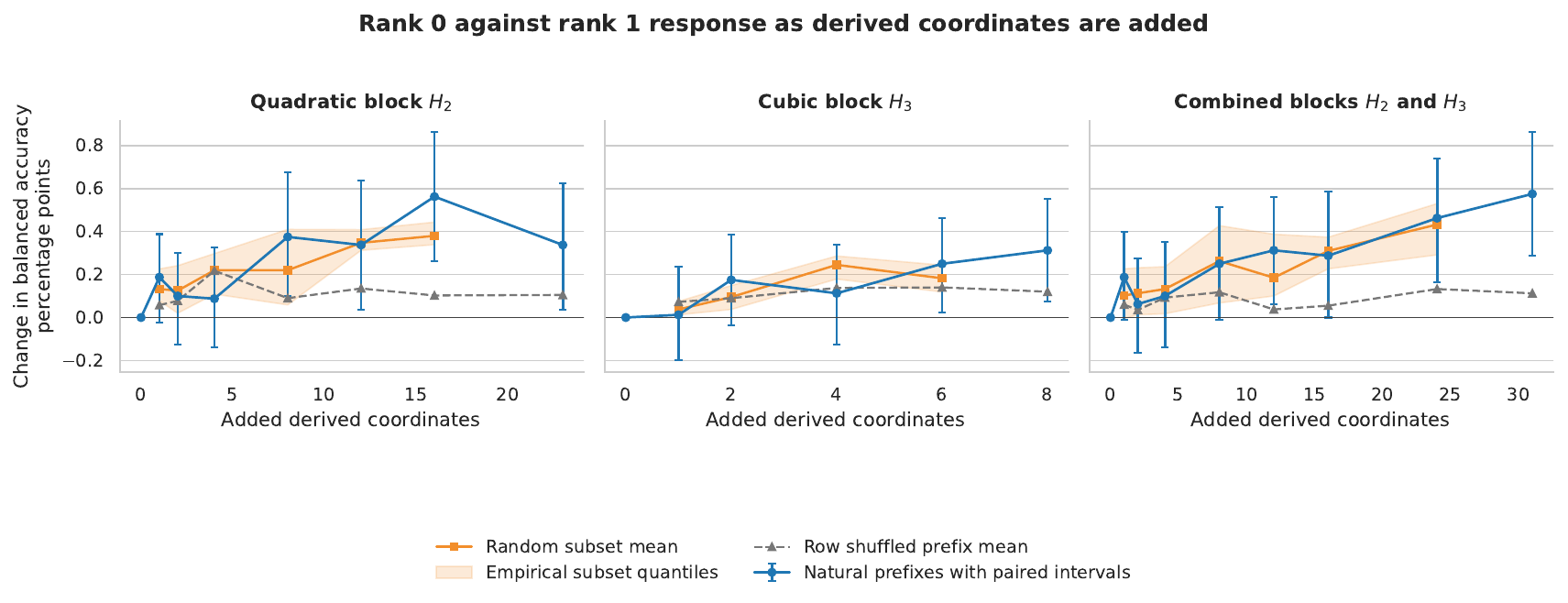}
\caption{Change in balanced accuracy for rank $0$ versus rank $1$ as derived coordinates are added to $X_1$.
Natural prefixes follow effective index order.
Orange bands show the empirical $2.5$th and $97.5$th percentiles from five random subsets.
They are descriptive empirical ranges.
The response is gradual and contains local reversals.}
\label{figQuantityRank}
\end{figure}

The natural-prefix response contains local reversals.
The quadratic block reaches its largest gain at $16$ coordinates and declines slightly at the full count of $23$.
In this quantity curve with $\alpha$ fixed, the endpoint for all $8$ cubic coordinates is $0.0875$ percentage points, with a paired interval containing zero.
This differs from the separately tuned repeated-assignment comparison, where the cubic panel has mean gain $0.4775$ percentage points and is positive in all five assignments.
Here $m$ counts derived coordinates: the full set of $31$ columns arises from $23$ base primes, with each column observed across many curves.
The evidence therefore supports a structured representation effect whose relation to feature count remains task-dependent.

The random subset curves give a second view.
For root number, random quadratic subsets have almost no mean gain up to $8$ coordinates.
At $16$ coordinates their mean gain is $0.6475$ percentage points, and all five subsets give a positive change.
Random joint subsets at size $24$ have mean gain $0.5975$ percentage points and are positive in all five cases.
Cubic subsets alone follow a different pattern.
At size $6$, their mean root-number change is $-0.1375$ percentage points.

For rank $0$ versus rank $1$, the random subset trends are more gradual.
The mean changes at the largest sampled sizes are $0.3800$ percentage points for $16$ quadratic coordinates, $0.1825$ percentage points for $6$ cubic coordinates, and $0.4325$ percentage points for $24$ joint coordinates.
Every one of the five subsets is positive in these three comparisons.

Natural prefixes mix three effects.
They change the number of columns, the base primes, and the proportion of nonmissing values.
For the quadratic prefixes, mean coverage rises from $0.227$ at the first coordinate to $0.868$ at full size.
With five subsets, the random-subset curves reduce position dependence and remain descriptive.
We therefore present them as a quantity diagnostic; fitting a parametric dose model would require a larger sampling study.

\FloatBarrier

\subsection{Sampling and shuffle controls}

Table~\ref{tabControls} collects the simple negative controls.
The conductor-only model is weak and measurably above chance for root number, while the label permutations remain close to chance.
The reported label permutation is a single draw from the null and provides a coarse chance-level check.
The row-shuffled full block keeps $31$ extra columns after breaking their alignment with each elliptic curve.

\begin{table}[H]
\centering
\caption{Balanced accuracy in percent for sampling and shuffle
controls.}
\label{tabControls}
\begin{tabular}{lrr}
\toprule
Control & Root number & Rank $0$ versus $1$\\
\midrule
Only the conductor & 52.0875 & 50.9875\\
Permuted labels with $X_1$ & 49.1500 & 52.0875\\
Permuted labels with full panel & 51.7500 & 50.8625\\
Fit using the row-shuffled full block & 70.1500 & 96.0500\\
Real $X_1$ baseline & 71.1875 & 96.0375\\
Real full panel & 72.1000 & 96.6125\\
\bottomrule
\end{tabular}
\end{table}

The real full panel outperforms the single row-shuffled fit.
The quantity study strengthens this comparison by repeating the matched shuffle five times.
For the full root-number block, the real gain minus the shuffled gain has mean $0.9100$ percentage points and is positive in all five comparisons.
For rank $0$ versus rank $1$, the corresponding mean difference is $0.4625$ percentage points and is again positive in all five comparisons.

\FloatBarrier

\subsection{Polynomial, conductor, and solver controls}
\label{secAdditionalControls}

We ran a matched confirmation experiment to separate three possible sources of the gain: the quadratic--cubic function class, explicit conductor information, and the stochastic optimizer.
The ordinary monomial panel contains the same $1000$ coordinates as $X_1$, together with $x_p^2$ at the $23$ quadratic base primes and $x_p^3$ at the $8$ cubic base primes.
It has the same dimension as $X_1+H_2+H_3$.
Proposition~\ref{propRepresentation} shows that the two augmented panels have the same unregularized affine column span.
Standardization and the $\ell_2$ penalty depend on the chosen coordinates, so the comparison measures the effect of basis parameterization under the recorded fitting rule.

The conductor panels add the single standardized coordinate $\log_{10}N$ to $X_1$ or to the full Hecke panel.
Every representation uses the same balanced population, the same five train--validation--test assignments, and validation selection over the original regularization grid.
The software environment for this confirmation run is Python $3.12.13$, NumPy $2.5.2$, pandas $3.0.5$, SciPy $1.18.1$, and scikit-learn $1.9.0$.
The archived main run lacks package-version metadata, so Table~\ref{tabAdditionalRandom} reports the confirmation run as a separate paired experiment.

\begin{table}[t]
\centering
\caption{Matched controls over five random assignments of one fixed balanced population.
The $X_1$ column gives mean balanced accuracy in percent; the other columns give the mean change in percentage points relative to the matched $X_1$ fit.
Values after $\pm$ are sample standard deviations, and parenthetical fractions count positive changes.}
\label{tabAdditionalRandom}
\resizebox{\textwidth}{!}{
\begin{tabular}{lrrrrr}
\toprule
Task & $X_1$ accuracy & $X_1+H_2+H_3$ & $X_1+x^2+x^3$ & $X_1+\log_{10}N$ & $X_1+H_2+H_3+\log_{10}N$\\
\midrule
Root number & $71.1000\pm0.3202$ & $0.9075\pm0.3494\ (5/5)$ & $0.9400\pm0.2760\ (5/5)$ & $1.2675\pm0.3014\ (5/5)$ & $2.0450\pm0.1258\ (5/5)$\\
Rank $0$ versus $1$ & $96.0150\pm0.2205$ & $0.3150\pm0.2468\ (5/5)$ & $0.2525\pm0.2909\ (4/5)$ & $0.1175\pm0.0763\ (4/5)$ & $0.4225\pm0.1633\ (5/5)$\\
\bottomrule
\end{tabular}}
\end{table}

For root number, the ordinary monomial basis and the Hecke basis produce gains of the same order.
For rank $0$ versus rank $1$, their mean gains are also close relative to the assignment variation.
The empirical support therefore attaches to their shared quadratic--cubic function class, with the Hecke relations supplying canonical arithmetic coordinates and an effective-index rule.

The log conductor raises the root-number baseline by $1.2675$ percentage points on average.
Adding the Hecke block to that conductor-aware baseline supplies a further mean gain of $0.7775\pm0.3813$ percentage points, positive in all five assignments.
The corresponding incremental gain for rank $0$ versus rank $1$ is $0.3050\pm0.1106$ percentage points, also positive in all five assignments.
These are descriptive conditional accuracy differences after explicit inclusion of the conductor.

For each assignment and task, we also applied the two-sided exact McNemar test to the discordant predictions from each augmented fit and its matched $X_1$ fit.
At the unadjusted $0.05$ level, the Hecke comparison against $X_1$ is significant in four of five root-number assignments and two of five rank assignments; the monomial comparison has the same counts.
The conductor-aware Hecke panel, also compared with $X_1$, is significant in all five root-number assignments and four of five rank assignments.
The tests share an underlying population and are reported as descriptive paired checks.

The power coordinates inherit the missing pattern of their base-prime coordinates, so we also isolate low-prime bad reduction.
For each of the $23$ primes $p\leq83$, define
\begin{equation*}
M_p(E)={\boldsymbol 1}_{\{p\mid N_E\}},
\end{equation*}
and write $M_{23}$ for the resulting indicator panel.
Table~\ref{tabMissingnessControls} compares $X_1+M_{23}$ with $X_1$ and then adds the Hecke block to the indicator-aware baseline.

\begin{table}[t]
\centering
\caption{Explicit low-prime missingness controls over the five matched random assignments.
Accuracy and changes use the same units as Table~\ref{tabAdditionalRandom}; parenthetical fractions count positive changes.}
\label{tabMissingnessControls}
\resizebox{\textwidth}{!}{
\begin{tabular}{lrrrr}
\toprule
Task & $X_1+M_{23}$ accuracy & Change from $X_1$ & Full panel $+M_{23}$ accuracy & Change from $X_1+M_{23}$\\
\midrule
Root number & $71.8575\pm0.3600$ & $0.7575\pm0.0753\ (5/5)$ & $72.6775\pm0.2585$ & $0.8200\pm0.2570\ (5/5)$\\
Rank $0$ versus $1$ & $96.3400\pm0.1717$ & $0.3250\pm0.1718\ (5/5)$ & $96.5225\pm0.2581$ & $0.1825\pm0.2126\ (4/5)$\\
\bottomrule
\end{tabular}}
\end{table}

The indicator panel improves both tasks, showing that low-prime divisibility is a meaningful predictor under mean imputation.
After these indicators are explicit, the Hecke block retains a mean gain of $0.8200$ percentage points for root number and $0.1825$ percentage points for rank $0$ versus rank $1$.
These outcomes quantify the two conditional predictive increments under the recorded fitting procedure.
At the unadjusted $0.05$ level, exact McNemar tests for $X_1+M_{23}$ against $X_1$ are significant in five of five root-number assignments and three of five rank assignments.
Tests for the full panel plus $M_{23}$ against $X_1+M_{23}$ are significant in three and two assignments, respectively.

We next sort each class by conductor, divide it into five equal blocks, and hold out one block from every class at a time.
This design tests transport across conductor ranges while preserving class balance.
Table~\ref{tabConductorHoldout} summarizes the five disjoint test folds, and Figure~\ref{figAdditionalControls} shows every fold.

\begin{table}[t]
\centering
\caption{Conductor-quintile holdout over five test folds.
Accuracy and changes use the same units as Table~\ref{tabAdditionalRandom}.}
\label{tabConductorHoldout}
\begin{tabular}{lrrr}
\toprule
Task & $X_1$ accuracy & Hecke change & Monomial change\\
\midrule
Root number & $67.5800\pm10.0085$ & $0.7225\pm1.1789\ (3/5)$ & $0.7425\pm0.8330\ (4/5)$\\
Rank $0$ versus $1$ & $95.6675\pm0.9686$ & $0.2000\pm0.3110\ (4/5)$ & $0.2450\pm0.2228\ (5/5)$\\
\bottomrule
\end{tabular}
\end{table}

The root-number $X_1$ baseline ranges from $50.9750$ percent in the lowest conductor fold to $76.5750$ percent in the middle fold.
The Hecke change ranges from $-0.3875$ to $2.3875$ percentage points and is positive in three folds.
This heterogeneity narrows the random-assignment conclusion to within-population prediction; transport to a new conductor range requires further confirmation.
The rank comparison is more stable, with a positive Hecke change in four folds and a positive monomial change in all five.

\begin{figure}[t]
\centering
\includegraphics[width=\textwidth]{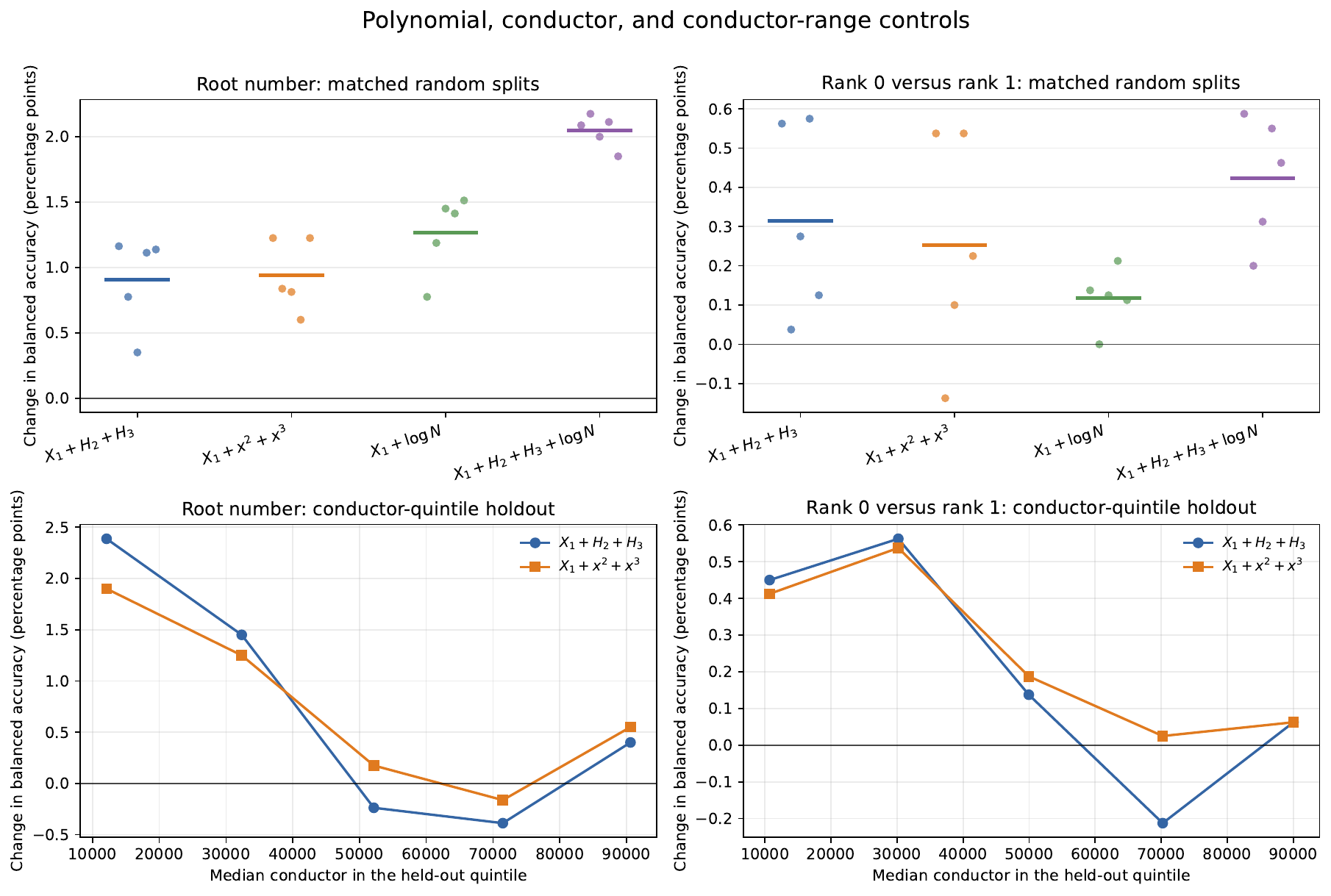}
\caption{Matched polynomial and conductor controls.
The upper panels show changes relative to $X_1$ for five random assignments; horizontal segments give means.
The lower panels hold out complete conductor quintiles and plot each change at the median conductor of its test fold.}
\label{figAdditionalControls}
\end{figure}

Finally, we refit $X_1$ and the Hecke panel with a deterministic L-BFGS logistic solver, select $C$ from $\{0.3,1,3\}$ on the external validation part, and disable internal early stopping.
Across the same five assignments, the root-number mean changes from $72.1400\pm0.2901$ percent to $72.8875\pm0.4104$ percent, a gain of $0.7475\pm0.2798$ percentage points that is positive in every assignment.
For rank $0$ versus rank $1$, the corresponding values are $95.9025\pm0.2338$, $96.1100\pm0.2640$, and $0.2075\pm0.1033$ percentage points, again positive in every assignment.
The direction of the augmentation effect therefore persists under a solver with a distinct optimization and stopping rule.

\FloatBarrier

\subsection{Exploratory single-block substitution}
\label{secNegative}

As a complementary representation test, we evaluate each symmetric-power block on its own.
This exploratory analysis comes from a separately recorded run, so all comparisons in this subsection are made internally within that run.
For each $k=1,\ldots,12$, the ordinary-prime panel is replaced by
\begin{equation*}
\bigl(V_{k,p}(E)\bigr)_{p\leq7919},
\qquad
V_{k,p}(E)=
\begin{cases}
H_k\bigl(a_p(E)/\sqrt p\bigr),&p\nmid N_E,\\
\text{missing},&p\mid N_E.
\end{cases}
\end{equation*}
Every model receives $1000$ transformed columns after training-set imputation.
The design uses all base primes $p\leq7919$ for every $k$, including effective indices above $7919$.
It is an equal-column substitution stress test with a different effective-index range from Theorem~\ref{thmIntroProfile}.
Each block remains sparse on the effective integer scale while retaining the same model dimension.

The labels throughout are the analytic rank and root number of the original elliptic curve.
The internal $k=1$ balanced accuracies are $0.706500$ for root number and $0.961250$ for rank $0$ versus rank $1$.
The available metadata record an $80/20$ split, $20\,000$ curves per class, and the missing-value policy at bad primes; validation records and split assignments are unavailable.
These metadata motivate the exploratory interpretation adopted here.

Under the effective index cap used in the main experiment, the number of available columns would fall rapidly.
The counts for $k=1,\ldots,12$ are
\begin{equation*}
1000,\ 23,\ 8,\ 4,\ 3,\ 2,\ 2,\ 2,\ 1,\ 1,\ 1,\ 1.
\end{equation*}
The equal column run deliberately grants each higher power many more locations than this capped design.

\begin{table}[t]
\centering
\caption{Balanced-accuracy proportions for one fixed symmetric-power block used alone.}
\label{tabSinglePower}
\resizebox{\textwidth}{!}{
\begin{tabular}{rrrrrr}
\toprule
$k$ & Rank $0,1,2$ & Rank $0$ versus $1$ &
Rank $0$ versus $2$ & Rank $1$ versus $2$ & Root number\\
\midrule
1  & 0.958 & 0.961 & 0.995 & 0.996 & 0.707\\
2  & 0.359 & 0.535 & 0.518 & 0.547 & 0.538\\
3  & 0.358 & 0.513 & 0.519 & 0.521 & 0.515\\
4  & 0.347 & 0.510 & 0.505 & 0.504 & 0.501\\
5  & 0.339 & 0.502 & 0.515 & 0.497 & 0.503\\
6  & 0.349 & 0.496 & 0.520 & 0.512 & 0.502\\
7  & 0.343 & 0.501 & 0.520 & 0.512 & 0.499\\
8  & 0.341 & 0.503 & 0.525 & 0.511 & 0.509\\
9  & 0.396 & 0.532 & 0.595 & 0.545 & 0.502\\
10 & 0.346 & 0.518 & 0.531 & 0.518 & 0.511\\
11 & 0.406 & 0.526 & 0.612 & 0.567 & 0.513\\
12 & 0.351 & 0.506 & 0.528 & 0.522 & 0.514\\
\bottomrule
\end{tabular}}
\end{table}

\begin{figure}[t]
\centering
\includegraphics[width=0.88\textwidth]{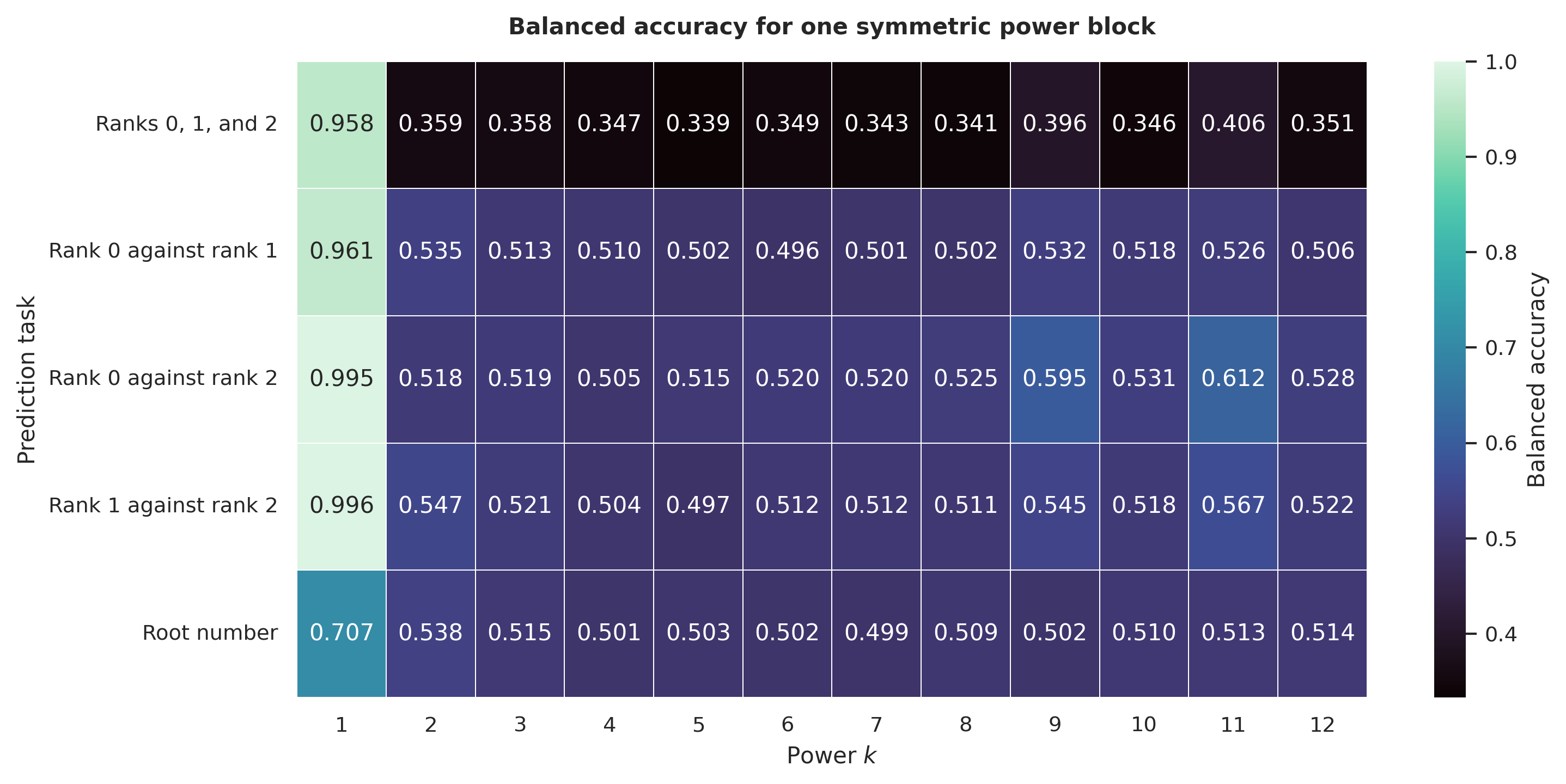}
\caption{Balanced accuracy for the single-block experiment.
Every column $k$ uses $1000$ features $H_k(x_p)$.
Binary chance is $0.5$, and three-class chance is $1/3$.
The labels are arithmetic invariants of the original elliptic curve.}
\label{figSinglePower}
\end{figure}

The contrast with the augmented experiment is sharp.  For root number, every $k\geq2$ lies between $0.499$ and $0.538$.
At $k=2$, the quadratic block shows weak predictive signal, with area under the ROC curve $0.5523$, and its balanced accuracy remains far below that of $k=1$.
Rank $0$ versus rank $1$ behaves similarly.
There are larger exploratory values at $k=9$ and $k=11$ in several rank tasks.
For example, rank $0$ versus rank $2$ reaches $0.5954$ and $0.6123$.
These peaks arise from an exploratory scan of $55$ higher-power task comparisons on one split and therefore require independent replication.

One contributing representation effect is noninjectivity.
Every $H_k$ with $k\geq2$ is noninjective on the Sato--Tate interval.
Every even $k$ removes the sign of $x$, and odd powers with $k\geq3$ oscillate.
Replacing $H_1$ with a noninjective $H_k$ may cause a linear classifier to lose the strong $H_1$ signal.
The archived design conflates this effect with feature distribution, standardization, regularization, effective range, and model class.

The universal profile concerns a small conditional mean at the effective position $p^k/X$ and leaves information retention by a single higher character unspecified.
The substitution results are compatible with Theorem~\ref{thmIntroProfile} and are consistent with the main design, which retains the ordinary-prime panel and adds prime-power blocks as structured polynomial features.

\subsection{Limitations}

The empirical gain depends on the task.
The full panel lowers three-class rank balanced accuracy from $95.5750$ percent to $94.6833$ percent.
Rank $0$ versus rank $2$ changes from $99.6875$
percent to $99.6250$ percent.
Rank $1$ versus rank $2$ rises from $99.7125$ percent to $99.8625$ percent at a baseline already near the ceiling.
Across tasks, the effect of additional Hecke coordinates is heterogeneous.

The repeated analysis uses five overlapping test assignments from one fixed balanced population.
More independent populations and conductor ranges are needed for a precise estimate of the average gain.
The paired intervals in Table~\ref{tabFixed} condition
on one trained pair of models.
The quantity experiment uses only five random subsets and five shuffled controls.

The generic monomial control reproduces the same order of gain as the Hecke basis.
Since $H_2=x^2-1$ and $H_3=x^3-2x$, both panels expose the same quadratic--cubic affine function class.
The results support this function class, while the Hecke relations provide its canonical arithmetic organization.
Explicit low-prime bad-reduction indicators also improve both principal tasks.
After their inclusion, the Hecke block retains positive mean increments; the descriptive attenuation relative to the unadjusted increments suggests partial overlap between the two feature blocks.
The conductor-quintile experiment also reveals root-number heterogeneity across ranges, including two negative Hecke changes.
Independent higher-conductor data are required to establish transport beyond the sampled population.

The archived main run omits package-version metadata, and its first-split values differ from the recorded confirmation run by at most $0.1375$ percentage points.
The confirmation tables therefore provide internally paired results from a fully recorded environment.
The deterministic-solver experiment supports the direction of the effect and provides a reproducible optimization check.

The recorded probability predictions perform poorly under Brier score and log loss; calibration was not assessed separately.
For root number, the Brier scores are $0.571206$ for $X_1$ and $0.547964$ for the full panel.
The recorded Brier score uses the two-class convention
\begin{equation*}
\frac1n\sum_{i=1}^n\sum_{c\in\{-1,1\}}
\left(\widehat p_{i,c}-{\boldsymbol 1}_{\{y_i=c\}}\right)^2.
\end{equation*}
Under this convention, the balanced constant score has value $0.5$.
The corresponding log-loss values are $8.788247$ and $6.746905$.
We therefore use balanced accuracy, the Matthews correlation coefficient, and area under the ROC curve as the primary empirical summaries.
The arithmetic theorem establishes an asymptotic root-number-weighted mean, while finite-sample classification accuracy remains a separate empirical question.

\section{Conclusion}

Prime coefficients and prime-power coefficients can be viewed on one effective conductor scale.
Conjecture~\ref{conjUniversal} expresses this principle.
The weight $2$ level aspect calculation verifies the principle for the squarefree newform family and every fixed power under the conditions of Theorem~\ref{thm:scaled-murmuration}.
In the two archimedean families, Proposition~\ref{propAllPrimePowersAverage} identifies the aggregate proper-power cancellation whose validity transfers the leading term of the published prime average to the natural von Mangoldt average.

The same viewpoint gives a practical feature design.  Ordinary prime coefficients remain the baseline.
Quadratic and cubic Hecke coordinates then add selected samples of the common profile without new coefficient queries.
In the elliptic-curve experiment, this representation improves direct root-number prediction in every one of
five overlapping assignments from the fixed balanced population.
The average balanced-accuracy gain is modest, and the augmented model reduces the baseline error in each assignment.
Rank $0$ versus rank $1$ also improves, with a smaller absolute gain.

The matched controls refine the interpretation.
An ordinary monomial basis yields a comparable gain, and the Hecke block adds signal in separate analyses that explicitly include $\log_{10}N$ or low-prime bad-reduction indicators.
A deterministic logistic solver preserves the direction of both principal effects.
Conductor-quintile holdout produces a less stable root-number gain, which limits the present predictive conclusion to the sampled conductor population.

The exploratory single-block experiment for $k=2,\ldots,12$ supplies a complementary representation test.
At fixed column count, higher-power blocks lose most of the ordinary-prime block's predictive performance for invariants of the original form.
Universal murmuration therefore guides structured augmentation with the ordinary-prime coefficients retained as the baseline.

\newpage

\appendix

\section{Technical supplements to the level aspect proof}
\label{appProof}

This appendix provides three auxiliary details for Section~\ref{sec:scaled-murmuration}: the exceptional local factor, the parameter inequalities, and the endpoint contribution in the prime-deleted character sum.

\subsection{The exceptional local factor}

The local factor at $P$ in Lemma~\ref{lem:local-arithmetic} requires separate treatment for an arbitrary fixed power.
Put $Q=P^k$.  In the range of the hyperbolic sum, Lemma~\ref{lem:p-coprime} gives
\begin{equation*}
P\nmid dr.
\end{equation*}
For every $\alpha\geq1$, one then has
\begin{equation*}
\theta_{r,Q}(P^\alpha)=\varphi(P^\alpha).
\end{equation*}
The exponent zero contribution contains the factor
\begin{equation*}
\rho_P(1)=\frac{P}{P+1}.
\end{equation*}
For $\alpha\geq1$, the factor $\rho_P$ is $1$, while
\begin{equation*}
\kappa(P^\alpha)=\frac{P}{P+1}.
\end{equation*}
After multiplication by the $n^{-1}$ weight, the degree
$\alpha$ contribution is
\begin{equation*}
\frac{\kappa(P^\alpha)}{\varphi(P^\alpha)}
\frac{\theta_{r,Q}(P^\alpha)}{P^\alpha}
=
\frac{P}{P+1}\frac{1}{P^\alpha}.
\end{equation*}
The complete local factor is therefore
\begin{equation}
\label{eqExceptionalFactor}
\frac{P}{P+1}
+
\sum_{\alpha\geq1}
\frac{P}{P+1}\frac1{P^\alpha}
=
\frac{P^2}{P^2-1}.
\end{equation}

For every prime $q\neq P$, the element $Q$ is a unit modulo $q$.
The substitutions in the proof of Lemma~\ref{lem:local-arithmetic} reduce the product of the two characters to the local character sum in \cite[Lemmas~3.8--3.10]{Zubrilina2025}.
Hence all factors away from $P$ are unchanged.
Since $P\nmid r$, their product is
\begin{equation*}
\frac{BC(r)}{B_P}.
\end{equation*}
The factor in formula~\eqref{eqExceptionalFactor} differs from the generic factor by a summable amount.
Since $P\nmid r$, the generic factor at $P$ is
\begin{equation*}
B_P=\frac{P^4-2P^2-P+1}{(P^2-1)^2}.
\end{equation*}
Consequently
\begin{equation*}
\frac{P^2}{P^2-1}-B_P
=
\frac{P^2+P-1}{(P^2-1)^2}
=O(P^{-2}).
\end{equation*}
Thus the complete product is
\begin{equation*}
BC(r)\frac{P^2/(P^2-1)}{B_P}=BC(r)+O(P^{-2}).
\end{equation*}
The Euler products converge uniformly over the relevant auxiliary parameters.
The $D^{-2}$ and $M^{-1/5}$ tail bounds apply uniformly away from $P$, while the exceptional factor contributes $O(P^{-2})$.
Together these estimates yield formula~\eqref{eq:local-series}.

\subsection{Parameter inequalities}

\begin{lemma}
\label{lemParameterAudit}
Assume all hypotheses of Theorem~\ref{thm:scaled-murmuration}, and put
\begin{equation*}
\eta=\frac{\delta_1}{2}-\delta_2.
\end{equation*}
Then the hyperbolic and parabolic errors used in the completion of Theorem~\ref{thm:scaled-murmuration} are
$O(X^{-\eta+\varepsilon})$, apart from the displayed terms $X^\varepsilon\sqrt y/P$ and $(1+y)/P$.
\end{lemma}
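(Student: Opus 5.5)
The plan is to reduce every error term to a power of $X$ by expressing $P$ and $Y$ through $X$, and then to check linear inequalities in $\delta_1,\delta_2$. By hypothesis $Y=X^{1-\delta_1+o(1)}$. The sizes of $P$ and $P^k$ are controlled differently: $P^k\ll X^{1+\delta_2}$ gives an upper bound, while a lower bound $P^k\gg X$ is only available on the nonempty hyperbolic range (where $4P^k>X+Y$) or when $y$ lies in a compact set. I will therefore treat each error as a monomial $P^{a}X^{b}Y^{c}$. When $a\geq0$ I bound it from above with $P^k\leq X^{1+\delta_2}$. When $a<0$ the term is a displayed $1/P$-type term, and I keep it separate. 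The factors $(PXY)^\varepsilon$ and $(P^kXY)^\varepsilon$ are all $X^{O(\varepsilon)}$, since $P\ll X^{(1+\delta_2)/k}$; after renaming $\varepsilon$ they become $X^\varepsilon$.

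\textbf{Hyperbolic errors.} For the five terms of $\mathcal E_{\mathrm{hyp}}$ in Corollary~\ref{cor:hyperbolic-interior} I substitute $P^k=X^{1+\delta_2}$ and $Y=X^{1-\delta_1}$ and obtain the exponents listed in the completion of the proof. I then add $\eta=\delta_1/2-\delta_2$ and verify nonpositivity using $2\delta_2<\delta_1<1/8$.
\begin{itemize}
\item The first exponent becomes $-\tfrac15+\tfrac1{10}\delta_2+\tfrac9{10}\delta_1<-\tfrac15+\delta_1<0$.
\item The third becomes exactly $0$, so this term is the one that dictates $\eta$.
\item The fourth and fifth reduce to $-\tfrac29+\tfrac{11}9\delta_1$ and $-\tfrac19+\tfrac{11}{18}\delta_1$, both negative since $\delta_1<1/8$.
\end{itemize}
The boundary terms of Lemma~\ref{lem:hyperbolic-boundary} are handled the same way. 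The term $O_k(P^{k-2}/X)$ equals $y/P^2$ and is absorbed into $(1+y)/P$.

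\textbf{Parabolic errors.} In Proposition~\ref{prop:parabolic-average} the middle errors contain a fractional power of $P$ alone, so there I write $P\leq X^{(1+\delta_2)/k}$. For even $k$, using $k\geq2$, this gives an exponent of at most $-\tfrac14+\tfrac1{6k}+(\tfrac12+\tfrac1{6k})\delta_2+\tfrac56\delta_1$. Adding $\eta$ and using $\delta_2<\delta_1/2$ bounds it by $-\tfrac16+\tfrac43\delta_1<0$. The odd case $k\geq3$ uses $P^{3/38}$ and $Y^{-16/19}$ in the same way and gives $-\tfrac{6}{19}+\tfrac{51}{38}\delta_1<0$. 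The error $P^{k/2}Y/X^{3/2}$ has exponent $\delta_2/2-\delta_1<-\eta$. The error $P^{k/2-1}/\sqrt X$ is exactly $\sqrt y/P$ and is left displayed.

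\textbf{Remaining terms.} The elementary term $P^k/(Y\sqrt X)$ has exponent at most $-\tfrac12+\delta_1+\delta_2$, which is below $-\eta$. The terms $1/X$ and $1/Y$ are $O(X^{-\eta})$ because $\eta<1/16$. The three relative dimension errors, multiplied by the main term $O(1+y^{1+\varepsilon})$, have exponents $-1+\delta_2$, $-\tfrac25+\delta_1+\delta_2$ and $-\delta_1+\delta_2$, all smaller than $-\eta$.

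The main obstacle is bookkeeping rather than any single idea. The hard part is making sure that each error with a negative power of $P$ is either legitimately routed to the displayed $1/P$ terms or bounded using a valid lower bound for $P$, since $P^k/X$ need not be bounded below in general. I would first check the even-$k$ parabolic term, because it is the only place where $1/k$ enters nontrivially; the monotonicity in $k$ there must hold uniformly for $k\geq2$.
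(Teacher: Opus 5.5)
Your proposal is correct and is essentially the paper's own argument: substitute $P^k\ll X^{1+\delta_2}$ and $Y\asymp X^{1-\delta_1}$ into each error, add $\eta$ to the resulting exponents, and check negativity from $0<2\delta_2<\delta_1<1/8$. As in the paper, the terms with negative powers of $P$ go to the displayed $X^\varepsilon\sqrt y/P$ and $(1+y)/P$. Two minor points: in the even parabolic case the $\delta_2$ term is dropped because its coefficient $\tfrac1{6k}-\tfrac12$ is negative and $\delta_2>0$, so $\delta_2<\delta_1/2$ plays no role there. You also skipped the second hyperbolic exponent $\delta_2-\delta_1$, which becomes $-\delta_1/2$ after adding $\eta$.
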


\begin{proof}
The five interior hyperbolic exponents, after adding $\eta$, are
\begin{equation*}
-\frac15+\frac1{10}\delta_2+\frac9{10}\delta_1,\qquad
-\frac{\delta_1}{2},\qquad
0,\qquad
-\frac29+\frac{11}{9}\delta_1,\qquad
-\frac19+\frac{11}{18}\delta_1.
\end{equation*}
The first, second, fourth, and fifth values are negative under the stated assumptions.
The third value gives equality with
$-\eta$ before adding $\eta$.
The two boundary exponents become
\begin{equation*}
-\delta_1
\qquad\text{and}\qquad
-\frac{\delta_2}{2}.
\end{equation*}

For even $k$, the second parabolic error has exponent at most
\begin{equation*}
-\frac16+\frac43\delta_1
\end{equation*}
after adding $\eta$.
This is negative since $\delta_1<1/8$.
For odd $k\geq3$, the corresponding bound is
\begin{equation*}
-\frac6{19}+\frac{51}{38}\delta_1,
\end{equation*}
which is also negative.
The remaining parabolic exponent is $\delta_2/2-\delta_1$, which is smaller than $-\eta$.
The first parabolic error is $(PXY)^\varepsilon\sqrt y/P$ and is included in $X^\varepsilon\sqrt y/P$ after renaming $\varepsilon$.
The dimension estimate produces the exponents
\begin{equation*}
-1+\delta_2,\qquad
-\frac25+\delta_1+\delta_2,\qquad
-\delta_1+\delta_2.
\end{equation*}
They are smaller than $-\eta$.
This verifies the range in which the base estimates of Zubrilina and of Kundu and M\"uller are used in Proposition~\ref{prop:parabolic-average}.
\end{proof}

\subsection{Endpoint contribution}

The proof of Lemma~\ref{lem:prime-deleted-character-sum} separates the indices for which $X/P^j<2$ or $Y/P^j<1$.
Because $Y\leq X$, each corresponding scaled interval contains $O(1)$ integers.
There are $O(\log X)$ possible indices $j$, so their total contribution is $O(\log X)$ and is absorbed by the $\sqrt X$ term in~\eqref{eq:prime-deleted-character-sum}.
For every remaining index, the scaled parameters satisfy $X/P^j\geq2$ and $1\leq Y/P^j\leq X/P^j$, precisely the range required by the source character-sum lemma.

\section{Reproducibility record for the main and confirmation experiments}
\label{appReproducibility}

\subsection{Recorded configuration}

Table~\ref{tabManifest} summarizes the settings recorded for the main experiment.

\begin{table}[H]
\centering
\caption{Recorded run settings and code-level settings for the main
experiment.}
\label{tabManifest}
\begin{tabular}{ll}
\toprule
Field & Recorded value\\
\midrule
Input rows & $73069$\\
Model population & $60000$\\
Base seed & $20250810$\\
Largest prime $q$ & $7919$\\
Bad prime policy & Missing\\
Code level iteration limit & $500$\\
Paired bootstrap repetitions & $1000$\\
Random subset repetitions & $5$\\
Row shuffle repetitions & $5$\\
Regularization grid & $3\mathord{\times}10^{-5},10^{-4},3\mathord{\times}10^{-4}$\\
\bottomrule
\end{tabular}
\end{table}

The supplied computational workflow defaults to an abbreviated run; the same configuration block defines the full settings recorded in Table~\ref{tabManifest}.
Reproducing the main tables requires selecting the full configuration before execution.
Its installation cell names the scientific Python packages without version pins, and the archived run metadata omit their resolved versions.
The supplied workflow contains no saved outputs or execution counts.
This explains our decision to retain the archived main numbers and report the new controls as a distinct confirmation run.

For this revision we recorded the resolved software environment shown in Table~\ref{tabConfirmationEnvironment}.
The confirmation scripts save split-level metrics, selected hyperparameters, paired discordance counts, exact McNemar tests, and conductor-fold results.

\begin{table}[H]
\centering
\caption{Software environment for the confirmation controls.}
\label{tabConfirmationEnvironment}
\begin{tabular}{ll}
\toprule
Field & Recorded value\\
\midrule
Data snapshot & $73\,069$ elliptic-curve records\\
Python & $3.12.13$\\
NumPy; pandas & $2.5.2$; $3.0.5$\\
SciPy; scikit-learn & $1.18.1$; $1.9.0$\\
Balanced population & Fixed across the five random assignments\\
\bottomrule
\end{tabular}
\end{table}

The computational supplement accompanying this manuscript contains the frozen data snapshot, executable analysis materials, and split-level outputs used in the confirmation analyses.
It also records the commands and software requirements needed to reproduce each analysis.

We used logistic regression with an $\ell_2$ penalty, fitted by averaged stochastic gradient descent. The stopping tolerance was $10^{-3}$.
Early stopping used $10$ percent of the fitting sample and was triggered when the validation score failed to improve for eight consecutive iterations.
The random seed was held fixed across matched feature panels.
Mean imputation, standardization, and classification were fitted as a single pipeline using training data only.

\subsection{Fitting procedure}

For each task and each complete assignment, the computation uses the following order.

\begin{enumerate}
\item Sample the same number of isogeny class representatives from each requested label to form one balanced population.
\item Form a stratified test set containing $20$ percent of that population; subsequent stability assignments reuse the same population.
\item Divide the remaining observations into a fit part and a validation part with sizes $25\,600$ and $6\,400$ for a binary task.
\item Form $H_2$ and $H_3$ from the unfilled good prime values.
Mark every bad prime location as missing.
\item Fit the mean imputer and standard scaler on the fit part.
Evaluate the regularization grid on the validation part.
\item Refit the complete pipeline with the selected value on the union of fit and validation observations.
\item Evaluate every representation on the same test rows and retain the prediction for paired comparison.
\item Resample test observations within the true classes $1000$ times and use the $2.5$th and $97.5$th percentiles to form the pointwise paired $95$ percent interval.
\end{enumerate}

The quantity experiment differs at one step.
It fixes the regularization value selected for $X_1$ and varies only the chosen derived coordinates.
Natural prefixes are fixed before test evaluation.
Random subsets use label-independent seeds.
A row-shuffled control permutes the added block within the training and test parts separately.

The confirmation analysis follows the same population and random-assignment rules.
It adds the monomial and conductor panels, records exact McNemar comparisons on every test assignment, and constructs five class-balanced conductor folds by sorting within each label.
The missingness analysis adds the $23$ indicators $M_p$ on those same assignments and compares the Hecke panel conditionally on them.
The deterministic-solver analysis uses the same outer validation split to select $C$ and then refits on the full training part with internal early stopping disabled.

\subsection{Selected values on the first split}

Table~\ref{tabSelectedAlpha} records the selected regularization values for the two principal tasks on the first split of the archived main run.
Different panels may select different values because augmentation changes the standardized design and its effective regularization.

\begin{table}[H]
\centering
\caption{Selected regularization values on the first split of the archived main run.}
\label{tabSelectedAlpha}
\begin{tabular}{lrrrr}
\toprule
Task & $X_1$ & $X_1+H_2$ & $X_1+H_3$ & Full panel\\
\midrule
Root number & $3\mathord{\times}10^{-5}$ &
$3\mathord{\times}10^{-5}$ &
$3\mathord{\times}10^{-5}$ & $10^{-4}$\\
Rank $0$ versus $1$ & $3\mathord{\times}10^{-5}$ &
$3\mathord{\times}10^{-4}$ &
$3\mathord{\times}10^{-4}$ & $3\mathord{\times}10^{-5}$\\
\bottomrule
\end{tabular}
\end{table}

The grid is narrow and several fits select an endpoint.
A wider grid and a deterministic solver would be useful in a larger confirmation study.
The present repeated-assignment analysis keeps the same grid and procedure in every representation.

\section{Split-level results and error accounting}
\label{appResults}

\subsection{Five complete assignments}

The five base seeds increase by $10000$.
Table~\ref{tabSplitRows} gives the ordinary prime and full panel balanced accuracies in percent.
The last column within each task is the paired change in percentage points.

\begin{table}[H]
\centering
\caption{The five complete assignment results from the archived main run.}
\label{tabSplitRows}
\resizebox{\textwidth}{!}{
\begin{tabular}{rrrrrrr}
\toprule
Seed & Root $X_1$ & Root full & Root change &
Rank $X_1$ & Rank full & Rank change\\
\midrule
20250810 & 71.1875 & 72.1000 & 0.9125 & 96.0375 & 96.6125 & 0.5750\\
20260810 & 70.8875 & 72.0500 & 1.1625 & 96.1125 & 96.4625 & 0.3500\\
20270810 & 70.9000 & 71.4500 & 0.5500 & 96.5000 & 96.5000 & 0.0000\\
20280810 & 70.5250 & 71.0875 & 0.5625 & 96.3625 & 96.5375 & 0.1750\\
20290810 & 71.3625 & 71.8875 & 0.5250 & 96.3250 & 96.6750 & 0.3500\\
\bottomrule
\end{tabular}}
\end{table}

The spread across these five rows provides descriptive evidence of stability.
The paired bootstrap intervals in Table~\ref{tabFixed} quantify conditional variation for a fixed fitted pair and split.
Both summaries leave variation across data sources and conductor ranges for future study.

\subsection{Errors on the first split}

Table~\ref{tabErrorCounts} converts the balanced accuracies to error counts.
This conversion is valid because every displayed test set is class-balanced.
A negative value in the error change column means that the full panel makes fewer mistakes.

\begin{table}[H]
\centering
\caption{Error counts for $X_1$ and the full panel on the first
split.}
\label{tabErrorCounts}
\begin{tabular}{lrrrr}
\toprule
Task & Test size & $X_1$ errors & Full errors & Error change\\
\midrule
Rank $0,1,2$ & 12000 & 531 & 638 & $+107$\\
Rank $0$ versus $1$ & 8000 & 317 & 271 & $-46$\\
Rank $0$ versus $2$ & 8000 & 25 & 30 & $+5$\\
Rank $1$ versus $2$ & 8000 & 23 & 11 & $-12$\\
Root number & 8000 & 2305 & 2232 & $-73$\\
\bottomrule
\end{tabular}
\end{table}

The root-number result has the largest absolute number of corrected test errors.
The rank $0$ versus rank $1$ result has the larger
relative reduction because its baseline error is much smaller.
The other rows demonstrate the ceiling and degradation cases that limit the scope of the claim.

\subsection{Run metadata for the single-block experiment}

The single block archive records $73069$ input rows, $1000$ base primes, $20\,000$ curves per class, a fixed $80$ percent and $20$ percent split, and missing values at bad primes.
It does not preserve validation results, split assignments, or selected regularization values at a level comparable with the main experiment.
Its $k=1$ row consequently serves only as the internal baseline of that archive.
A future confirmation should rerun all powers on the exact main split with the same validation and refitting procedure.

\clearpage
\section{Executable code for the experiments}
\label{appCode}

This appendix gives the complete executable programs used to construct the data and reproduce every computational experiment reported in the paper.
The acquisition stage downloads the tagged class-level ecdata tables, selects the documented populations, and computes root numbers and prime coefficients with PARI.
The remaining programs start from the frozen snapshot, audit its schema and arithmetic recurrences, and produce the descriptive profiles, fixed-split fits, uncertainty calculations, repeated assignments, quantity-response experiments, matched controls, and the $k=1,\ldots,12$ analyses.
All input locations and output directories can be supplied at run time.
The electronic supplement also contains executable notebooks for the three longest workflows and a driver that runs the complete sequence.

\subsection{Data acquisition and snapshot construction}\mbox{}\par\smallskip

\begin{lstlisting}[
  style=paperpython,
  caption={Acquisition of the class-level elliptic-curve data, population selection, arithmetic computation, and snapshot validation.},
  label={lstDataAcquisition}
]
"""Data acquisition and snapshot construction

Exported from the executable notebook.
"""

try:
    from IPython.display import display
except ImportError:
    display = print

# Notebook code cell 1


# Notebook code cell 2
from pathlib import Path
import ast
import json
import os
import urllib.request
import warnings

import numpy as np
import pandas as pd
from sympy import primerange
from tqdm.auto import tqdm

print("Installation complete.")

# Notebook code cell 3
MODE = "combined"  # "quick", "paper_classifier", "basic_control", or "combined"
SEED = 2026
SAVE_TO_DRIVE = False
DRIVE_OUTPUT_DIR = "/content/drive/MyDrive/murmuration_data"
N_WORKERS = 1 if os.name == "nt" else max(1, min(2, os.cpu_count() or 1))
BASE_RUNTIME_DIR = Path("/content") if Path("/content").exists() else Path.cwd()

VALID_MODES = {"quick", "paper_classifier", "basic_control", "combined"}
if MODE not in VALID_MODES:
    raise ValueError(f"MODE must be one of {sorted(VALID_MODES)}")

N_PRIMES = 100 if MODE == "quick" else 1000
N_PER_RANK = 200 if MODE == "quick" else 20_000

print({
    "mode": MODE,
    "number_of_primes": N_PRIMES,
    "curves_per_rank_when_sampled": N_PER_RANK,
    "workers": N_WORKERS,
    "save_to_drive": SAVE_TO_DRIVE,
})

# Notebook code cell 4
RAW_CURVE_DIR = BASE_RUNTIME_DIR / "he_raw_data" / "curves"
RAW_CURVE_DIR.mkdir(parents=True, exist_ok=True)

base_url = "https://raw.githubusercontent.com/JohnCremona/ecdata/2026-04-22/curves"
curve_files = []

for low in range(0, 100_001, 10_000):
    high = low + 9_999
    span = f"{low:05d}-{high:05d}"
    destination = RAW_CURVE_DIR / f"curves.{span}"
    curve_files.append(destination)
    if not destination.exists() or destination.stat().st_size == 0:
        url = f"{base_url}/curves.{span}"
        print("Downloading", destination.name)
        urllib.request.urlretrieve(url, destination)

missing = [str(path) for path in curve_files if not path.exists() or path.stat().st_size == 0]
if missing:
    raise FileNotFoundError("Some ecdata files were not downloaded: " + ", ".join(missing))

print(f"Ready: {len(curve_files)} files in {RAW_CURVE_DIR}")

# Notebook code cell 5
def parse_curve_file(path):
    records = []
    with path.open("r", encoding="utf-8") as handle:
        for line_number, line in enumerate(handle, start=1):
            line = line.strip()
            if not line:
                continue
            fields = line.split()
            if len(fields) < 6:
                raise ValueError(
                    f"Too few fields in {path.name}, line {line_number}: {line}"
                )
            conductor, class_code, curve_number, ainvs_text, rank, torsion = fields[:6]
            trailing_fields = fields[6:]
            ainvs = ast.literal_eval(ainvs_text)
            if not isinstance(ainvs, list) or len(ainvs) != 5:
                raise ValueError(
                    f"Invalid a-invariants in {path.name}, line {line_number}: {ainvs_text}"
                )
            conductor = int(conductor)
            curve_number = int(curve_number)
            records.append({
                "curve_id": f"{conductor}{class_code}{curve_number}",
                "isogeny_class": f"{conductor}{class_code}",
                "conductor": conductor,
                "curve_number": curve_number,
                "a1": int(ainvs[0]),
                "a2": int(ainvs[1]),
                "a3": int(ainvs[2]),
                "a4": int(ainvs[3]),
                "a6": int(ainvs[4]),
                "analytic_rank": int(rank),
                "torsion_order": int(torsion),
                "source_extra_fields": " ".join(trailing_fields),
            })
    return records


records = []
for path in tqdm(curve_files, desc="Parsing ecdata files"):
    records.extend(parse_curve_file(path))

metadata = pd.DataFrame.from_records(records)
metadata = metadata[
    metadata["conductor"].between(1, 100_000, inclusive="both")
].copy()
metadata = metadata.sort_values(
    ["conductor", "isogeny_class", "curve_number"]
).reset_index(drop=True)

if metadata.empty:
    raise RuntimeError("The parsed metadata table is empty.")
if metadata["isogeny_class"].duplicated().any():
    duplicate = metadata.loc[
        metadata["isogeny_class"].duplicated(), "isogeny_class"
    ].iloc[0]
    raise RuntimeError(f"Duplicate isogeny class found: {duplicate}")
if not (metadata["curve_number"] == 1).all():
    warnings.warn("Some representatives do not have curve_number=1; uniqueness is still checked by class.")

display(metadata.head())
display(
    metadata.groupby("analytic_rank")
    .size()
    .rename("number_of_isogeny_classes")
    .to_frame()
)
print(f"Parsed {len(metadata):,} isogeny classes with conductor at most 100000.")

# Notebook code cell 6
def balanced_rank_sample(frame, n_each, seed):
    pieces = []
    for rank in (0, 1, 2):
        pool = frame[frame["analytic_rank"] == rank]
        if len(pool) < n_each:
            raise ValueError(
                f"Rank {rank} has only {len(pool):,} available classes, "
                f"but {n_each:,} were requested."
            )
        pieces.append(pool.sample(n=n_each, random_state=seed + rank))
    return pd.concat(pieces, ignore_index=False)


candidate = metadata[metadata["analytic_rank"].isin([0, 1, 2])].copy()
classifier_sample = balanced_rank_sample(candidate, N_PER_RANK, SEED)

basic_control = metadata[
    (
        metadata["conductor"].between(7_500, 10_000, inclusive="both")
        & metadata["analytic_rank"].isin([0, 1])
    )
    |
    (
        metadata["conductor"].between(5_000, 10_000, inclusive="both")
        & metadata["analytic_rank"].isin([0, 2])
    )
].copy()

if MODE in {"quick", "paper_classifier"}:
    selected = classifier_sample.copy()
elif MODE == "basic_control":
    selected = basic_control.copy()
else:
    selected = pd.concat([classifier_sample, basic_control], ignore_index=False)

classifier_classes = set(classifier_sample["isogeny_class"])
control_classes = set(basic_control["isogeny_class"])

selected = selected.drop_duplicates("isogeny_class").copy()
selected["in_classifier_sample"] = selected["isogeny_class"].isin(classifier_classes)
selected["in_basic_control"] = selected["isogeny_class"].isin(control_classes)
selected = selected.sort_values(["conductor", "isogeny_class"]).reset_index(drop=True)

print(f"Selected {len(selected):,} distinct isogeny classes.")
display(selected.groupby("analytic_rank").size().rename("selected").to_frame())
display(
    selected[["in_classifier_sample", "in_basic_control"]]
    .value_counts()
    .rename("rows")
    .to_frame()
)

# Notebook code cell 7
all_paper_primes = list(primerange(2, 7_920))
assert len(all_paper_primes) == 1000 and all_paper_primes[-1] == 7919
primes = all_paper_primes[:N_PRIMES]

_WORKER_PARI = None
_WORKER_PRIMES = None


def initialize_worker(prime_list):
    global _WORKER_PARI, _WORKER_PRIMES
    from cypari2 import Pari
    _WORKER_PARI = Pari()
    _WORKER_PRIMES = [int(p) for p in prime_list]


def compute_one_curve(task):
    index, ainvs = task
    try:
        curve = _WORKER_PARI.ellinit([int(value) for value in ainvs])
        root_number = int(_WORKER_PARI.ellrootno(curve))
        coefficients = [
            int(_WORKER_PARI.ellap(curve, p)) for p in _WORKER_PRIMES
        ]
        return index, root_number, coefficients, None
    except Exception as exc:
        return index, 0, [], f"{type(exc).__name__}: {exc}"


tasks = [
    (
        index,
        [row.a1, row.a2, row.a3, row.a4, row.a6],
    )
    for index, row in selected.iterrows()
]

coefficient_matrix = np.empty((len(selected), len(primes)), dtype=np.int16)
root_numbers = np.empty(len(selected), dtype=np.int8)
errors = []

if N_WORKERS == 1:
    initialize_worker(primes)
    iterator = map(compute_one_curve, tasks)
    for index, root_number, coefficients, error in tqdm(
        iterator, total=len(tasks), desc="Computing elliptic-curve data"
    ):
        if error is not None:
            errors.append((index, error))
            continue
        root_numbers[index] = root_number
        coefficient_matrix[index, :] = coefficients
else:
    from multiprocessing import get_context

    context = get_context("fork")
    with context.Pool(
        processes=N_WORKERS,
        initializer=initialize_worker,
        initargs=(primes,),
    ) as pool:
        iterator = pool.imap_unordered(compute_one_curve, tasks, chunksize=4)
        for index, root_number, coefficients, error in tqdm(
            iterator, total=len(tasks), desc="Computing elliptic-curve data"
        ):
            if error is not None:
                errors.append((index, error))
                continue
            root_numbers[index] = root_number
            coefficient_matrix[index, :] = coefficients

if errors:
    display(pd.DataFrame(errors[:20], columns=["row", "error"]))
    raise RuntimeError(f"Arithmetic computation failed for {len(errors)} curves.")

print("Coefficient computation complete.")

# Notebook code cell 8
metadata_columns = [
    "curve_id",
    "isogeny_class",
    "conductor",
    "analytic_rank",
    "torsion_order",
    "a1",
    "a2",
    "a3",
    "a4",
    "a6",
    "in_classifier_sample",
    "in_basic_control",
]

output = selected[metadata_columns].copy().reset_index(drop=True)
output = output.rename(columns={
    "a1": "weierstrass_a1",
    "a2": "weierstrass_a2",
    "a3": "weierstrass_a3",
    "a4": "weierstrass_a4",
    "a6": "weierstrass_a6",
})

# Some minimal Weierstrass coefficients in ecdata are larger than a signed
# 64-bit integer. PARI has already used them as exact Python integers above.
# Store only these descriptive columns as decimal strings so that Arrow does
# not attempt an unsafe conversion to C long. The Fourier coefficients remain
# compact integer columns.
for column in [
    "weierstrass_a1",
    "weierstrass_a2",
    "weierstrass_a3",
    "weierstrass_a4",
    "weierstrass_a6",
]:
    output[column] = output[column].map(str)

output["root_number"] = root_numbers
coefficient_frame = pd.DataFrame(
    coefficient_matrix,
    columns=[f"a_{p}" for p in primes],
)
output = pd.concat([output, coefficient_frame], axis=1)

if not output["root_number"].isin([-1, 1]).all():
    raise RuntimeError("A computed root number is not -1 or +1.")
if output[[f"a_{p}" for p in primes]].isna().any().any():
    raise RuntimeError("A coefficient is missing.")

expected_sign = np.where(output["analytic_rank"] % 2 == 0, 1, -1)
parity_mismatches = int(np.count_nonzero(output["root_number"].to_numpy() != expected_sign))
if parity_mismatches:
    warnings.warn(
        f"Found {parity_mismatches} differences between the computed root number "
        "and (-1)^rank. The rank field is algebraic, so these rows should be checked."
    )

if SAVE_TO_DRIVE:
    try:
        from google.colab import drive
        drive.mount("/content/drive")
        output_directory = Path(DRIVE_OUTPUT_DIR)
    except Exception as exc:
        warnings.warn(
            "Google Drive authorization failed. The completed data will be "
            f"saved locally instead. Details: {type(exc).__name__}: {exc}"
        )
        SAVE_TO_DRIVE = False
        output_directory = BASE_RUNTIME_DIR / "murmuration_data"
else:
    output_directory = BASE_RUNTIME_DIR / "murmuration_data"

output_directory.mkdir(parents=True, exist_ok=True)
output_path = output_directory / f"ecdata_murmuration_{MODE}_{N_PRIMES}primes.parquet"
metadata_path = output_directory / "ecdata_metadata_N_le_100000.parquet"
manifest_path = output_directory / f"ecdata_murmuration_{MODE}_{N_PRIMES}primes_manifest.json"

output.to_parquet(output_path, index=False, compression="zstd")

metadata_for_save = metadata.copy()
for column in ["a1", "a2", "a3", "a4", "a6"]:
    metadata_for_save[column] = metadata_for_save[column].map(str)
metadata_for_save.to_parquet(metadata_path, index=False, compression="zstd")

manifest = {
    "mode": MODE,
    "seed": SEED,
    "number_of_rows": int(len(output)),
    "number_of_primes": int(len(primes)),
    "largest_prime": int(primes[-1]),
    "conductor_min": int(output["conductor"].min()),
    "conductor_max": int(output["conductor"].max()),
    "rank_counts": {
        str(key): int(value)
        for key, value in output["analytic_rank"].value_counts().sort_index().items()
    },
    "parity_mismatches": parity_mismatches,
}
manifest_path.write_text(json.dumps(manifest, indent=2), encoding="utf-8")

print("Experiment table:", output_path)
print("All metadata:", metadata_path)
print("Manifest:", manifest_path)
print("File size:", f"{output_path.stat().st_size / 1024**2:.1f} MB")
display(output.head())
display(pd.DataFrame([manifest]))

# Notebook code cell 9
if "output_path" not in globals() or not Path(output_path).exists():
    search_directories = [BASE_RUNTIME_DIR / "murmuration_data"]
    if "DRIVE_OUTPUT_DIR" in globals():
        search_directories.append(Path(DRIVE_OUTPUT_DIR))

    candidates = []
    for directory in search_directories:
        if directory.exists():
            candidates.extend(directory.glob("ecdata_murmuration_*.parquet"))

    if not candidates:
        raise RuntimeError(
            "No experiment Parquet file exists yet. Run Step 6 completely, then "
            "run Step 7. Continue only after Step 7 prints 'Experiment table: ...'."
        )
    output_path = max(candidates, key=lambda path: path.stat().st_mtime)
    print("Found the newest completed experiment table:", output_path)

if SAVE_TO_DRIVE:
    print("The result is already saved in Google Drive:", output_path)
else:
    from google.colab import files
    files.download(str(output_path))
\end{lstlisting}

\Needspace{0.28\textheight}
\subsection{Main feature panels, quantity response, repeated assignments, and profile figures}\mbox{}\par\smallskip

\lstinputlisting[
  style=paperpython,
  caption={Common data audit, Hecke features, fixed and repeated splits, main fits, paired intervals, quantity-response controls, parity diagnostic, and murmuration figures.},
  label={lstMainExperiments}
]{code/main_experiments.py}

\Needspace{0.28\textheight}
\subsection{Matched polynomial, conductor, and missingness controls}\mbox{}\par\smallskip

\begin{lstlisting}[
  style=paperpython,
  caption={Matched ordinary-polynomial and conductor controls, conductor-quintile folds, and exact paired tests.},
  label={lstMatchedControls}
]
from __future__ import annotations

import argparse
import gc
import json
import math
import platform
import time
from pathlib import Path

import numpy as np
import pandas as pd
import scipy
import sklearn
from scipy.stats import binomtest
from sklearn.impute import SimpleImputer
from sklearn.linear_model import SGDClassifier
from sklearn.metrics import balanced_accuracy_score, matthews_corrcoef, roc_auc_score
from sklearn.model_selection import train_test_split
from sklearn.pipeline import Pipeline
from sklearn.preprocessing import StandardScaler


BASE_SEED = 20250810
ALPHA_GRID = (3e-5, 1e-4, 3e-4)
MAX_ITER = 500
TASKS = {
    "root_number": {"label": "root_number", "classes": (-1, 1)},
    "rank_0_vs_1": {"label": "analytic_rank", "classes": (0, 1)},
}
NOTEBOOK_TASK_INDEX = {"rank_0_vs_1": 0, "root_number": 4}


def first_primes_through(limit: int) -> list[int]:
    sieve = np.ones(limit + 1, dtype=bool)
    sieve[:2] = False
    for value in range(2, int(math.sqrt(limit)) + 1):
        if sieve[value]:
            sieve[value * value : limit + 1 : value] = False
    return np.flatnonzero(sieve).tolist()


def make_pipeline(seed: int, alpha: float) -> Pipeline:
    return Pipeline(
        [
            ("imputer", SimpleImputer(strategy="mean")),
            ("scaler", StandardScaler()),
            (
                "model",
                SGDClassifier(
                    loss="log_loss",
                    penalty="l2",
                    alpha=alpha,
                    max_iter=MAX_ITER,
                    tol=1e-3,
                    early_stopping=True,
                    validation_fraction=0.10,
                    n_iter_no_change=8,
                    average=True,
                    random_state=seed,
                ),
            ),
        ]
    )


def choose_alpha(
    x_fit: np.ndarray,
    y_fit: np.ndarray,
    x_validation: np.ndarray,
    y_validation: np.ndarray,
    seed: int,
) -> tuple[float, float]:
    best_alpha = None
    best_score = -np.inf
    for alpha in ALPHA_GRID:
        model = make_pipeline(seed, alpha)
        model.fit(x_fit, y_fit)
        score = balanced_accuracy_score(y_validation, model.predict(x_validation))
        if score > best_score + 1e-12 or (
            abs(score - best_score) <= 1e-12
            and (best_alpha is None or alpha > best_alpha)
        ):
            best_score = float(score)
            best_alpha = float(alpha)
        del model
        gc.collect()
    assert best_alpha is not None
    return best_alpha, best_score


def score_model(model: Pipeline, x_test: np.ndarray, y_test: np.ndarray) -> dict:
    prediction = model.predict(x_test)
    decision = np.asarray(model.decision_function(x_test), dtype=float)
    classes = np.asarray(model.named_steps["model"].classes_)
    auc = roc_auc_score((y_test == classes[1]).astype(int), decision)
    return {
        "balanced_accuracy": float(balanced_accuracy_score(y_test, prediction)),
        "MCC": float(matthews_corrcoef(y_test, prediction)),
        "AUC": float(auc),
        "prediction": prediction,
    }


def fit_one(
    design: np.ndarray,
    labels: np.ndarray,
    fit_rows: np.ndarray,
    validation_rows: np.ndarray,
    train_rows: np.ndarray,
    test_rows: np.ndarray,
    seed: int,
) -> dict:
    start = time.time()
    alpha, validation_ba = choose_alpha(
        design[fit_rows],
        labels[fit_rows],
        design[validation_rows],
        labels[validation_rows],
        seed + 20_000,
    )
    model = make_pipeline(seed, alpha)
    model.fit(design[train_rows], labels[train_rows])
    scores = score_model(model, design[test_rows], labels[test_rows])
    iterations = int(np.max(np.atleast_1d(model.named_steps["model"].n_iter_)))
    scores.update(
        {
            "selected_alpha": alpha,
            "validation_balanced_accuracy": validation_ba,
            "optimizer_iterations": iterations,
            "seconds": time.time() - start,
        }
    )
    del model
    gc.collect()
    return scores


def select_balanced_rows(
    frame: pd.DataFrame, task_name: str, split_seed: int
) -> tuple[np.ndarray, np.ndarray]:
    spec = TASKS[task_name]
    rng = np.random.default_rng(split_seed + sum(ord(c) for c in task_name))
    selected_parts = []
    for value in spec["classes"]:
        pool = frame.index[frame[spec["label"]] == value].to_numpy()
        selected_parts.append(rng.choice(pool, size=min(20_000, len(pool)), replace=False))
    n_each = min(len(part) for part in selected_parts)
    rows = np.concatenate([part[:n_each] for part in selected_parts])
    labels = frame[spec["label"]].to_numpy()
    return rows.astype(np.int64), labels


def random_split(
    frame: pd.DataFrame, task_name: str, split_seed: int
) -> tuple[np.ndarray, np.ndarray, np.ndarray, np.ndarray, np.ndarray]:
    # Match the notebook stability protocol: construct the balanced population
    # once with BASE_SEED and vary only its train/validation/test assignment.
    selected, labels = select_balanced_rows(frame, task_name, BASE_SEED)
    train_rows, test_rows = train_test_split(
        selected,
        test_size=0.20,
        random_state=split_seed,
        stratify=labels[selected],
    )
    fit_rows, validation_rows = train_test_split(
        train_rows,
        test_size=0.20,
        random_state=split_seed + 1,
        stratify=labels[train_rows],
    )
    return labels, fit_rows, validation_rows, train_rows, test_rows


def conductor_folds(
    frame: pd.DataFrame, task_name: str, selection_seed: int, folds: int = 5
) -> tuple[np.ndarray, list[tuple[np.ndarray, np.ndarray, np.ndarray, np.ndarray]]]:
    selected, labels = select_balanced_rows(frame, task_name, selection_seed)
    spec = TASKS[task_name]
    class_blocks: dict[int, list[np.ndarray]] = {}
    conductors = frame["conductor"].to_numpy()
    for value in spec["classes"]:
        class_rows = selected[labels[selected] == value]
        ordered = class_rows[np.argsort(conductors[class_rows], kind="stable")]
        class_blocks[value] = [part.astype(np.int64) for part in np.array_split(ordered, folds)]

    result = []
    for fold in range(folds):
        test_rows = np.concatenate([class_blocks[value][fold] for value in spec["classes"]])
        train_rows = np.concatenate(
            [
                class_blocks[value][other]
                for value in spec["classes"]
                for other in range(folds)
                if other != fold
            ]
        )
        fit_rows, validation_rows = train_test_split(
            train_rows,
            test_size=0.20,
            random_state=selection_seed + 100 + fold,
            stratify=labels[train_rows],
        )
        result.append(
            (
                fit_rows.astype(np.int64),
                validation_rows.astype(np.int64),
                train_rows.astype(np.int64),
                test_rows.astype(np.int64),
            )
        )
    return labels, result


def mcnemar_exact(base: np.ndarray, new: np.ndarray, truth: np.ndarray) -> dict:
    base_correct = base == truth
    new_correct = new == truth
    base_only = int(np.sum(base_correct & ~new_correct))
    new_only = int(np.sum(~base_correct & new_correct))
    discordant = base_only + new_only
    p_value = 1.0 if discordant == 0 else float(
        binomtest(new_only, discordant, p=0.5, alternative="two-sided").pvalue
    )
    return {
        "base_only_correct": base_only,
        "new_only_correct": new_only,
        "discordant": discordant,
        "mcnemar_exact_p": p_value,
    }


def load_designs(data_path: Path) -> tuple[pd.DataFrame, dict[str, np.ndarray], dict]:
    data = pd.read_parquet(data_path)
    data = data[data["in_classifier_sample"].astype(bool)].copy().reset_index(drop=True)
    coefficient_map = {
        int(str(column).split("_", 1)[1]): column
        for column in data.columns
        if str(column).startswith("a_") and str(column)[2:].isdigit()
    }
    primes = first_primes_through(max(coefficient_map))
    if primes != sorted(coefficient_map):
        raise RuntimeError("Coefficient columns are not the complete initial prime prefix.")
    columns = [coefficient_map[p] for p in primes]
    raw = data[columns].to_numpy(dtype=np.float32, copy=True)
    x_raw = raw / np.sqrt(np.asarray(primes, dtype=np.float32))[None, :]
    del raw
    conductors = data["conductor"].to_numpy(dtype=np.int64)
    bad = conductors[:, None] % np.asarray(primes, dtype=np.int64)[None, :] == 0
    x1 = x_raw.copy()
    x1[bad] = np.nan

    h2_count = sum(p * p <= primes[-1] for p in primes)
    h3_count = sum(p * p * p <= primes[-1] for p in primes)
    h2 = x_raw[:, :h2_count] ** 2 - 1.0
    h3 = x_raw[:, :h3_count] ** 3 - 2.0 * x_raw[:, :h3_count]
    x2 = x_raw[:, :h2_count] ** 2
    x3 = x_raw[:, :h3_count] ** 3
    h2[bad[:, :h2_count]] = np.nan
    h3[bad[:, :h3_count]] = np.nan
    x2[bad[:, :h2_count]] = np.nan
    x3[bad[:, :h3_count]] = np.nan
    log_conductor = np.log10(conductors.astype(np.float32)).reshape(-1, 1)

    designs = {
        "X1": x1,
        "Hecke": np.concatenate([x1, h2, h3], axis=1),
        "Monomial": np.concatenate([x1, x2, x3], axis=1),
        "X1_logN": np.concatenate([x1, log_conductor], axis=1),
        "Hecke_logN": np.concatenate([x1, h2, h3, log_conductor], axis=1),
    }
    meta = {
        "experiment": "matched polynomial, conductor, and conductor-quintile controls",
        "rows": len(data),
        "prime_count": len(primes),
        "largest_prime": primes[-1],
        "h2_count": h2_count,
        "h3_count": h3_count,
        "base_seed": BASE_SEED,
        "balanced_population_policy": "fixed across random splits",
        "python_version": platform.python_version(),
        "numpy_version": np.__version__,
        "pandas_version": pd.__version__,
        "scipy_version": scipy.__version__,
        "scikit_learn_version": sklearn.__version__,
    }
    return data, designs, meta


def run_random(
    frame: pd.DataFrame,
    designs: dict[str, np.ndarray],
    split_count: int,
    output_dir: Path,
) -> pd.DataFrame:
    rows = []
    comparison_rows = []
    for split_index in range(split_count):
        split_seed = BASE_SEED + 10_000 * split_index
        for task_name in TASKS:
            labels, fit_rows, validation_rows, train_rows, test_rows = random_split(
                frame, task_name, split_seed
            )
            predictions = {}
            for representation_index, (name, design) in enumerate(designs.items()):
                print(f"random split={split_index} task={task_name} representation={name}", flush=True)
                result = fit_one(
                    design,
                    labels,
                    fit_rows,
                    validation_rows,
                    train_rows,
                    test_rows,
                    split_seed + NOTEBOOK_TASK_INDEX[task_name],
                )
                predictions[name] = result.pop("prediction")
                rows.append(
                    {
                        "split_kind": "random",
                        "split_index": split_index,
                        "seed": split_seed,
                        "task": task_name,
                        "representation": name,
                        "test_size": len(test_rows),
                        **result,
                    }
                )
                pd.DataFrame(rows).to_csv(output_dir / "random_results_checkpoint.csv", index=False)
            truth = labels[test_rows]
            for name in designs:
                if name == "X1":
                    continue
                comparison_rows.append(
                    {
                        "split_kind": "random",
                        "split_index": split_index,
                        "seed": split_seed,
                        "task": task_name,
                        "representation": name,
                        **mcnemar_exact(predictions["X1"], predictions[name], truth),
                    }
                )
    result_frame = pd.DataFrame(rows)
    result_frame.to_csv(output_dir / "random_results.csv", index=False)
    pd.DataFrame(comparison_rows).to_csv(output_dir / "random_mcnemar.csv", index=False)
    return result_frame


def run_blocked(
    frame: pd.DataFrame,
    designs: dict[str, np.ndarray],
    output_dir: Path,
) -> pd.DataFrame:
    rows = []
    blocked_designs = {key: designs[key] for key in ("X1", "Hecke", "Monomial")}
    conductors = frame["conductor"].to_numpy()
    for task_name in TASKS:
        labels, folds = conductor_folds(frame, task_name, BASE_SEED)
        for fold, (fit_rows, validation_rows, train_rows, test_rows) in enumerate(folds):
            for name, design in blocked_designs.items():
                print(f"conductor fold={fold} task={task_name} representation={name}", flush=True)
                result = fit_one(
                    design,
                    labels,
                    fit_rows,
                    validation_rows,
                    train_rows,
                    test_rows,
                    BASE_SEED + NOTEBOOK_TASK_INDEX[task_name] + 500 + fold,
                )
                result.pop("prediction")
                rows.append(
                    {
                        "split_kind": "conductor_quintile",
                        "split_index": fold,
                        "task": task_name,
                        "representation": name,
                        "test_size": len(test_rows),
                        "test_conductor_min": int(conductors[test_rows].min()),
                        "test_conductor_median": float(np.median(conductors[test_rows])),
                        "test_conductor_max": int(conductors[test_rows].max()),
                        **result,
                    }
                )
                pd.DataFrame(rows).to_csv(output_dir / "blocked_results_checkpoint.csv", index=False)
    frame_out = pd.DataFrame(rows)
    frame_out.to_csv(output_dir / "blocked_results.csv", index=False)
    return frame_out


def add_deltas(frame: pd.DataFrame) -> pd.DataFrame:
    baseline = frame[frame.representation == "X1"][
        ["split_kind", "split_index", "task", "balanced_accuracy"]
    ].rename(columns={"balanced_accuracy": "X1_balanced_accuracy"})
    merged = frame.merge(
        baseline,
        on=["split_kind", "split_index", "task"],
        how="left",
        validate="many_to_one",
    )
    merged["delta_BA_vs_X1"] = merged.balanced_accuracy - merged.X1_balanced_accuracy
    return merged


def summarize(frame: pd.DataFrame) -> pd.DataFrame:
    return (
        frame.groupby(["split_kind", "task", "representation"], as_index=False)
        .agg(
            runs=("delta_BA_vs_X1", "size"),
            mean_BA=("balanced_accuracy", "mean"),
            sd_BA=("balanced_accuracy", "std"),
            mean_delta_BA=("delta_BA_vs_X1", "mean"),
            sd_delta_BA=("delta_BA_vs_X1", "std"),
            positive_delta_fraction=("delta_BA_vs_X1", lambda x: float(np.mean(x > 0))),
        )
    )


def main() -> None:
    parser = argparse.ArgumentParser()
    parser.add_argument("data", type=Path)
    parser.add_argument("output", type=Path)
    parser.add_argument("--random-splits", type=int, default=5)
    parser.add_argument("--blocked", action="store_true")
    args = parser.parse_args()
    args.output.mkdir(parents=True, exist_ok=True)

    frame, designs, meta = load_designs(args.data)
    (args.output / "run_metadata.json").write_text(
        json.dumps(meta, indent=2), encoding="utf-8"
    )
    random_results = run_random(frame, designs, args.random_splits, args.output)
    result_frames = [random_results]
    if args.blocked:
        result_frames.append(run_blocked(frame, designs, args.output))
    combined = add_deltas(pd.concat(result_frames, ignore_index=True))
    combined.to_csv(args.output / "all_results_with_deltas.csv", index=False)
    summary = summarize(combined)
    summary.to_csv(args.output / "summary.csv", index=False)
    print(summary.to_string(index=False), flush=True)


if __name__ == "__main__":
    main()
\end{lstlisting}

\begin{lstlisting}[
  style=paperpython,
  caption={Explicit low-prime missingness indicators and conditional Hecke comparisons.},
  label={lstMissingnessControls}
]
from __future__ import annotations

import argparse
import gc
import json
from pathlib import Path

import numpy as np
import pandas as pd

from augmentation_controls import (
    BASE_SEED,
    NOTEBOOK_TASK_INDEX,
    TASKS,
    first_primes_through,
    fit_one,
    load_designs,
    mcnemar_exact,
    random_split,
)


def main() -> None:
    parser = argparse.ArgumentParser()
    parser.add_argument("data", type=Path)
    parser.add_argument("output", type=Path)
    parser.add_argument("--splits", type=int, default=5)
    args = parser.parse_args()
    args.output.mkdir(parents=True, exist_ok=True)

    frame, base_designs, metadata = load_designs(args.data)
    primes = first_primes_through(metadata["largest_prime"])
    low_primes = np.asarray(primes[: metadata["h2_count"]], dtype=np.int64)
    conductors = frame.conductor.to_numpy(dtype=np.int64)
    missing_indicators = (conductors[:, None] % low_primes[None, :] == 0).astype(
        np.float32
    )
    metadata.update(
        {
            "experiment": "explicit low-prime missingness controls",
            "indicator_count": int(len(low_primes)),
            "indicator_primes": low_primes.tolist(),
            "random_splits": int(args.splits),
        }
    )
    (args.output / "missingness_metadata.json").write_text(
        json.dumps(metadata, indent=2), encoding="utf-8"
    )
    designs = {
        "X1": base_designs["X1"],
        "Hecke": base_designs["Hecke"],
        "X1_missing": np.concatenate(
            [base_designs["X1"], missing_indicators], axis=1
        ),
        "Hecke_missing": np.concatenate(
            [base_designs["Hecke"], missing_indicators], axis=1
        ),
    }

    rows = []
    comparisons = []
    for split_index in range(args.splits):
        split_seed = BASE_SEED + 10_000 * split_index
        for task_name in TASKS:
            labels, fit_rows, validation_rows, train_rows, test_rows = random_split(
                frame, task_name, split_seed
            )
            predictions = {}
            for representation, design in designs.items():
                print(
                    f"split={split_index} task={task_name} representation={representation}",
                    flush=True,
                )
                result = fit_one(
                    design,
                    labels,
                    fit_rows,
                    validation_rows,
                    train_rows,
                    test_rows,
                    split_seed + NOTEBOOK_TASK_INDEX[task_name],
                )
                predictions[representation] = result.pop("prediction")
                rows.append(
                    {
                        "split_index": split_index,
                        "split_seed": split_seed,
                        "task": task_name,
                        "representation": representation,
                        "test_size": len(test_rows),
                        **result,
                    }
                )
                pd.DataFrame(rows).to_csv(
                    args.output / "missingness_results_checkpoint.csv", index=False
                )
            truth = labels[test_rows]
            for base, new in (
                ("X1", "Hecke"),
                ("X1", "X1_missing"),
                ("X1_missing", "Hecke_missing"),
                ("Hecke", "Hecke_missing"),
            ):
                comparisons.append(
                    {
                        "split_index": split_index,
                        "split_seed": split_seed,
                        "task": task_name,
                        "baseline": base,
                        "representation": new,
                        **mcnemar_exact(predictions[base], predictions[new], truth),
                    }
                )
            gc.collect()

    results = pd.DataFrame(rows)
    baseline = results[results.representation == "X1"][
        ["split_index", "task", "balanced_accuracy"]
    ].rename(columns={"balanced_accuracy": "X1_balanced_accuracy"})
    results = results.merge(
        baseline, on=["split_index", "task"], how="left", validate="many_to_one"
    )
    results["delta_BA_vs_X1"] = (
        results.balanced_accuracy - results.X1_balanced_accuracy
    )
    missing_baseline = results[results.representation == "X1_missing"][
        ["split_index", "task", "balanced_accuracy"]
    ].rename(columns={"balanced_accuracy": "X1_missing_balanced_accuracy"})
    results = results.merge(
        missing_baseline,
        on=["split_index", "task"],
        how="left",
        validate="many_to_one",
    )
    results["delta_BA_vs_X1_missing"] = (
        results.balanced_accuracy - results.X1_missing_balanced_accuracy
    )
    results.to_csv(args.output / "missingness_results.csv", index=False)
    pd.DataFrame(comparisons).to_csv(
        args.output / "missingness_mcnemar.csv", index=False
    )
    summary = (
        results.groupby(["task", "representation"], as_index=False)
        .agg(
            runs=("balanced_accuracy", "size"),
            mean_BA=("balanced_accuracy", "mean"),
            sd_BA=("balanced_accuracy", "std"),
            mean_delta_vs_X1=("delta_BA_vs_X1", "mean"),
            sd_delta_vs_X1=("delta_BA_vs_X1", "std"),
            mean_delta_vs_X1_missing=("delta_BA_vs_X1_missing", "mean"),
            sd_delta_vs_X1_missing=("delta_BA_vs_X1_missing", "std"),
        )
    )
    summary.to_csv(args.output / "missingness_summary.csv", index=False)
    print(summary.to_string(index=False), flush=True)


if __name__ == "__main__":
    main()
\end{lstlisting}

\begin{lstlisting}[
  style=paperpython,
  caption={Deterministic logistic-solver robustness experiment.},
  label={lstSolverControls}
]
from __future__ import annotations

import argparse
import gc
import json
import time
from pathlib import Path

import numpy as np
import pandas as pd
import sklearn
from sklearn.impute import SimpleImputer
from sklearn.linear_model import LogisticRegression
from sklearn.metrics import balanced_accuracy_score, matthews_corrcoef, roc_auc_score
from sklearn.model_selection import train_test_split
from sklearn.pipeline import Pipeline
from sklearn.preprocessing import StandardScaler

from augmentation_controls import (
    BASE_SEED,
    NOTEBOOK_TASK_INDEX,
    TASKS,
    load_designs,
    select_balanced_rows,
)


C_GRID = (0.3, 1.0, 3.0)


def make_pipeline(c_value: float) -> Pipeline:
    return Pipeline(
        [
            ("imputer", SimpleImputer(strategy="mean")),
            ("scaler", StandardScaler()),
            (
                "model",
                LogisticRegression(
                    C=c_value,
                    l1_ratio=0.0,
                    solver="lbfgs",
                    max_iter=1000,
                    tol=1e-6,
                ),
            ),
        ]
    )


def matched_split(frame: pd.DataFrame, task_name: str, split_seed: int):
    selected, labels = select_balanced_rows(frame, task_name, BASE_SEED)
    train_rows, test_rows = train_test_split(
        selected,
        test_size=0.20,
        random_state=split_seed,
        stratify=labels[selected],
    )
    fit_rows, validation_rows = train_test_split(
        train_rows,
        test_size=0.20,
        random_state=split_seed + 1,
        stratify=labels[train_rows],
    )
    return labels, fit_rows, validation_rows, train_rows, test_rows


def fit_one(design, labels, fit_rows, validation_rows, train_rows, test_rows):
    best_c = None
    best_score = -np.inf
    for c_value in C_GRID:
        candidate = make_pipeline(c_value)
        candidate.fit(design[fit_rows], labels[fit_rows])
        score = balanced_accuracy_score(
            labels[validation_rows], candidate.predict(design[validation_rows])
        )
        if score > best_score + 1e-12 or (
            abs(score - best_score) <= 1e-12 and (best_c is None or c_value < best_c)
        ):
            best_score = float(score)
            best_c = float(c_value)
        del candidate
        gc.collect()

    model = make_pipeline(best_c)
    start = time.time()
    model.fit(design[train_rows], labels[train_rows])
    prediction = model.predict(design[test_rows])
    decision = model.decision_function(design[test_rows])
    classes = model.named_steps["model"].classes_
    result = {
        "selected_C": best_c,
        "validation_balanced_accuracy": best_score,
        "balanced_accuracy": float(
            balanced_accuracy_score(labels[test_rows], prediction)
        ),
        "MCC": float(matthews_corrcoef(labels[test_rows], prediction)),
        "AUC": float(
            roc_auc_score((labels[test_rows] == classes[1]).astype(int), decision)
        ),
        "iterations": int(model.named_steps["model"].n_iter_[0]),
        "seconds": time.time() - start,
    }
    del model
    gc.collect()
    return result


def main() -> None:
    parser = argparse.ArgumentParser()
    parser.add_argument("data", type=Path)
    parser.add_argument("output", type=Path)
    parser.add_argument("--splits", type=int, default=5)
    args = parser.parse_args()
    args.output.mkdir(parents=True, exist_ok=True)

    frame, all_designs, metadata = load_designs(args.data)
    designs = {name: all_designs[name] for name in ("X1", "Hecke")}
    rows = []
    for split_index in range(args.splits):
        split_seed = BASE_SEED + 10_000 * split_index
        for task_name in TASKS:
            labels, fit_rows, validation_rows, train_rows, test_rows = matched_split(
                frame, task_name, split_seed
            )
            for representation, design in designs.items():
                print(
                    f"split={split_index} task={task_name} representation={representation}",
                    flush=True,
                )
                result = fit_one(
                    design,
                    labels,
                    fit_rows,
                    validation_rows,
                    train_rows,
                    test_rows,
                )
                rows.append(
                    {
                        "split_index": split_index,
                        "split_seed": split_seed,
                        "task": task_name,
                        "representation": representation,
                        "test_size": len(test_rows),
                        **result,
                    }
                )
                pd.DataFrame(rows).to_csv(
                    args.output / "solver_results_checkpoint.csv", index=False
                )

    results = pd.DataFrame(rows)
    baseline = results[results.representation == "X1"][
        ["split_index", "task", "balanced_accuracy"]
    ].rename(columns={"balanced_accuracy": "X1_balanced_accuracy"})
    results = results.merge(
        baseline, on=["split_index", "task"], how="left", validate="many_to_one"
    )
    results["delta_BA_vs_X1"] = (
        results.balanced_accuracy - results.X1_balanced_accuracy
    )
    results.to_csv(args.output / "solver_results.csv", index=False)
    summary = (
        results.groupby(["task", "representation"], as_index=False)
        .agg(
            runs=("delta_BA_vs_X1", "size"),
            mean_BA=("balanced_accuracy", "mean"),
            sd_BA=("balanced_accuracy", "std"),
            mean_delta_BA=("delta_BA_vs_X1", "mean"),
            sd_delta_BA=("delta_BA_vs_X1", "std"),
            positive_delta_fraction=(
                "delta_BA_vs_X1", lambda values: float(np.mean(values > 0))
            ),
        )
    )
    summary.to_csv(args.output / "solver_summary.csv", index=False)
    metadata.update(
        {
            "robustness_solver": "LogisticRegression(lbfgs)",
            "C_grid": C_GRID,
            "scikit_learn_version": sklearn.__version__,
            "early_stopping": False,
        }
    )
    (args.output / "solver_metadata.json").write_text(
        json.dumps(metadata, indent=2), encoding="utf-8"
    )
    print(summary.to_string(index=False), flush=True)


if __name__ == "__main__":
    main()
\end{lstlisting}

\begin{lstlisting}[
  style=paperpython,
  caption={Publication figure for the matched controls.},
  label={lstMatchedPlot}
]
from __future__ import annotations

import argparse
from pathlib import Path

import matplotlib.pyplot as plt
import numpy as np
import pandas as pd


LABELS = {
    "Hecke": r"$X_1+H_2+H_3$",
    "Monomial": r"$X_1+x^2+x^3$",
    "X1_logN": r"$X_1+\log N$",
    "Hecke_logN": r"$X_1+H_2+H_3+\log N$",
}
COLORS = {
    "Hecke": "#3465A4",
    "Monomial": "#E07A1F",
    "X1_logN": "#5A9A55",
    "Hecke_logN": "#8C5AA6",
}
TASK_TITLES = {
    "root_number": "Root number",
    "rank_0_vs_1": r"Rank $0$ versus rank $1$",
}


def main() -> None:
    parser = argparse.ArgumentParser()
    parser.add_argument("results", type=Path)
    parser.add_argument("output_stem", type=Path)
    args = parser.parse_args()

    frame = pd.read_csv(args.results)
    random = frame[frame.split_kind == "random"].copy()
    blocked = frame[frame.split_kind == "conductor_quintile"].copy()
    random["delta_pp"] = 100.0 * random.delta_BA_vs_X1
    blocked["delta_pp"] = 100.0 * blocked.delta_BA_vs_X1

    fig, axes = plt.subplots(2, 2, figsize=(12.2, 8.0))
    random_order = ["Hecke", "Monomial", "X1_logN", "Hecke_logN"]
    offsets = np.linspace(-0.12, 0.12, 5)

    for column, task in enumerate(("root_number", "rank_0_vs_1")):
        axis = axes[0, column]
        task_frame = random[random.task == task]
        for x_index, representation in enumerate(random_order):
            values = (
                task_frame[task_frame.representation == representation]
                .sort_values("split_index")
                .delta_pp.to_numpy()
            )
            axis.scatter(
                x_index + offsets[: len(values)],
                values,
                s=28,
                color=COLORS[representation],
                alpha=0.72,
                linewidths=0,
            )
            axis.plot(
                [x_index - 0.20, x_index + 0.20],
                [values.mean(), values.mean()],
                color=COLORS[representation],
                linewidth=2.4,
            )
        axis.axhline(0.0, color="black", linewidth=0.8, alpha=0.7)
        axis.set_xticks(range(len(random_order)))
        axis.set_xticklabels([LABELS[name] for name in random_order], rotation=18, ha="right")
        axis.set_ylabel("Change in balanced accuracy (percentage points)")
        axis.set_title(f"{TASK_TITLES[task]}: matched random splits")
        axis.grid(axis="y", alpha=0.25)

        axis = axes[1, column]
        task_frame = blocked[blocked.task == task]
        for representation, marker in (("Hecke", "o"), ("Monomial", "s")):
            part = task_frame[task_frame.representation == representation].sort_values(
                "test_conductor_median"
            )
            axis.plot(
                part.test_conductor_median,
                part.delta_pp,
                marker=marker,
                markersize=6,
                linewidth=1.6,
                color=COLORS[representation],
                label=LABELS[representation],
            )
        axis.axhline(0.0, color="black", linewidth=0.8, alpha=0.7)
        axis.set_xlabel("Median conductor in the held-out quintile")
        axis.set_ylabel("Change in balanced accuracy (percentage points)")
        axis.set_title(f"{TASK_TITLES[task]}: conductor-quintile holdout")
        axis.grid(alpha=0.25)
        axis.legend(frameon=False, fontsize=9)

    fig.suptitle("Polynomial, conductor, and conductor-range controls", fontsize=15, y=1.01)
    fig.tight_layout()
    args.output_stem.parent.mkdir(parents=True, exist_ok=True)
    fig.savefig(args.output_stem.with_suffix(".pdf"), bbox_inches="tight")
    fig.savefig(args.output_stem.with_suffix(".png"), dpi=220, bbox_inches="tight")
    plt.close(fig)


if __name__ == "__main__":
    main()
\end{lstlisting}

\Needspace{0.28\textheight}
\subsection{Effective-index alignment and the $k=1,\ldots,12$ profile scan}\mbox{}\par\smallskip

\begin{lstlisting}[
  style=paperpython,
  caption={Interpolation-based alignment of the quadratic and cubic profiles with the ordinary-prime profile.},
  label={lstAlignment}
]
from __future__ import annotations

import argparse
import math
import re
from pathlib import Path

import numpy as np
import pandas as pd


def balanced_contrast(
    values: np.ndarray, missing: np.ndarray, signs: np.ndarray, scale: float
) -> tuple[np.ndarray, np.ndarray]:
    contrasts = []
    standard_errors = []
    for column in range(values.shape[1]):
        positive = values[(signs == 1) & ~missing[:, column], column].astype(float)
        negative = values[(signs == -1) & ~missing[:, column], column].astype(float)
        contrasts.append(scale * (positive.mean() - negative.mean()) / 2.0)
        standard_errors.append(
            scale
            * math.sqrt(
                positive.var(ddof=1) / len(positive)
                + negative.var(ddof=1) / len(negative)
            )
            / 2.0
        )
    return np.asarray(contrasts), np.asarray(standard_errors)


def main() -> None:
    parser = argparse.ArgumentParser()
    parser.add_argument("data", type=Path)
    parser.add_argument("output", type=Path)
    parser.add_argument("--low", type=int, default=7500)
    parser.add_argument("--high", type=int, default=10000)
    args = parser.parse_args()

    data = pd.read_parquet(args.data)
    window = data[data.conductor.between(args.low, args.high)].reset_index(drop=True)
    primes = sorted(
        int(str(column)[2:])
        for column in data.columns
        if re.fullmatch(r"a_\d+", str(column))
    )
    raw = window[[f"a_{prime}" for prime in primes]].to_numpy(np.float32)
    x1 = raw / np.sqrt(np.asarray(primes, dtype=np.float32))[None, :]
    conductors = window.conductor.to_numpy(dtype=np.int64)
    missing = conductors[:, None] % np.asarray(primes, dtype=np.int64)[None, :] == 0
    signs = window.root_number.to_numpy(dtype=int)
    scale = math.sqrt((args.low + args.high) / 2.0)
    ordinary, _ = balanced_contrast(x1, missing, signs, scale)

    rows = []
    for power in (2, 3):
        count = sum(prime**power <= primes[-1] for prime in primes)
        base_primes = np.asarray(primes[:count], dtype=int)
        if power == 2:
            values = x1[:, :count] ** 2 - 1.0
        else:
            values = x1[:, :count] ** 3 - 2.0 * x1[:, :count]
        contrast, standard_error = balanced_contrast(
            values, missing[:, :count], signs, scale
        )
        interpolated = np.interp(base_primes**power, primes, ordinary)
        design = np.column_stack([np.ones(count), interpolated])
        intercept, slope = np.linalg.lstsq(design, contrast, rcond=None)[0]
        rows.append(
            {
                "block": f"H{power}",
                "nodes": count,
                "correlation": np.corrcoef(interpolated, contrast)[0, 1],
                "fitted_intercept": intercept,
                "fitted_slope": slope,
                "sign_agreement": int(np.sum(np.sign(interpolated) == np.sign(contrast))),
                "intervals_excluding_zero": int(
                    np.sum(np.abs(contrast) > 1.96 * standard_error)
                ),
            }
        )

    result = pd.DataFrame(rows)
    args.output.parent.mkdir(parents=True, exist_ok=True)
    result.to_csv(args.output, index=False)
    print(result.to_string(index=False))


if __name__ == "__main__":
    main()
\end{lstlisting}

\lstinputlisting[
  style=paperpython,
  caption={Rank-conditioned profiles, balanced root-sign contrasts, and common-scale RMS summaries for $k=1,\ldots,12$.},
  label={lstPowerProfiles}
]{code/plot_symmetric_power_murmuration_k1_12.py}

\Needspace{0.28\textheight}
\subsection{Single-block substitution and final publication figures}\mbox{}\par\smallskip

\lstinputlisting[
  style=paperpython,
  caption={Single symmetric-power block classification for every $k=1,\ldots,12$ and every prediction task.},
  label={lstSingleBlock}
]{code/single_block_experiments.py}

\lstinputlisting[
  style=paperpython,
  caption={Balanced-accuracy heat map and paired test-error figure derived from the fixed-split results.},
  label={lstPaperFigures}
]{code/paper_output_figures.py}

\Needspace{0.28\textheight}
\subsection{Complete execution driver}\mbox{}\par\smallskip

\lstinputlisting[
  style=paperpython,
  caption={Driver for the full experimental sequence.},
  label={lstRunAll}
]{code/run_all_experiments.py}

\end{document}